\tikzset{inner sep=0pt, node distance=5mm,
  root/.style={circle,draw,minimum size=5pt,thick},
  broot/.style={circle,draw,minimum size=5pt,thick,fill},
  xroot/.style={circle,draw,minimum size=5pt,thick,label=below:$\times$},
  doublearrow/.style={postaction={decorate},   decoration={markings,mark=at position .6 with {\arrow[line width=1.2pt]{>}}},double distance=1.6pt,thick},
  rdoublearrow/.style={postaction={decorate},   decoration={markings,mark=at position .4 with {\arrowreversed[line width=1.2pt]{>}}},double distance=1.6pt,thick},
	curvedline/.style={bend=right}
} 
\theoremstyle{plain}
\newtheorem{theo}{Theorem}[section]
\theoremstyle{definition}
\newtheorem{example}[theo]{Example}
\newtheorem{definition}[theo]{Definition}
\theoremstyle{plain}
\newtheorem{lemma}[theo]{Lemma}
\newtheorem{theorem}[theo]{Theorem}
\newtheorem{proposition}[theo]{Proposition}
\theoremstyle{definition}
\newtheorem{remark}[theo]{Remark}
\newenvironment{colored}{\color{red}}{}
\newcommand{\bc}{\begin{colored}}
\newcommand{\ec}{\end{colored}}
\newcommand{\beq}{\begin{equation}}
\newcommand{\eeq}{\end{equation}}
\renewcommand{\a}{\alpha}
\renewcommand{\b}{\beta}
\newcommand{\e}{\epsilon}
\renewcommand{\l}{\lambda}
\newcommand{\gJ}{\mathfrak{J}}
\newcommand{\bC}{\mathbb{C}}
\newcommand{\bR}{\mathbb{R}}
\newcommand{\bZ}{\mathbb{Z}}
\newcommand{\bK}{\mathbb{K}}
\newcommand{\gb}{\mathfrak{b}}
\newcommand{\gc}{\mathfrak{c}}
\renewcommand{\gg}{\mathfrak{g}}
\newcommand{\gh}{\mathfrak{h}}
\newcommand{\gk}{\mathfrak{k}}
\newcommand{\gm}{\mathfrak{m}}
\newcommand{\gn}{\mathfrak{n}}
\newcommand{\gq}{\mathfrak{q}}
\newcommand{\gu}{\mathfrak{u}}
\newcommand{\so}{\mathfrak{so}}
\newcommand{\su}{\mathfrak{su}}
\newcommand{\ggl}{\mathfrak{gl}}
\newcommand\GL{\mathrm{GL}}
\newcommand\SL{\mathrm{SL}}
\newcommand\SO{\mathrm{SO}}
\newcommand\SU{\mathrm{SU}}
\newcommand\Sp{\mathrm{Sp}}
\renewcommand\sp{\mathfrak{sp}}
\renewcommand\sl{\mathfrak{sl}}
\newcommand{\cC}{\mathcal{C}}
\newcommand{\cD}{\mathcal{D}}
\newcommand{\cF}{\mathcal{F}}
\newcommand{\cI}{\mathcal{I}}
\newcommand{\cJ}{\mathcal{J}}
\newcommand{\cL}{\mathcal{L}}
\newcommand{\cU}{\mathcal{U}}
\renewcommand{\square}{\kern1pt\vbox
{\hrule height 0.6pt\hbox{\vrule width 0.6pt\hskip 3pt
\vbox{\vskip 6pt}\hskip 3pt\vrule width 0.6pt}\hrule height0.6pt}\kern1pt}
\DeclareMathOperator\tr{tr}
\DeclareMathOperator\Aut{Aut}
\DeclareMathOperator\ad{ad}
\DeclareMathOperator\Id{Id}
\DeclareMathOperator{\sgn}{sgn}
\renewcommand\Re{\operatorname{\mathfrak{Re}}}
\renewcommand\Im{\operatorname{Im}}
\newcommand{\wt}{\widetilde}
\newcommand{\wh}{\widehat}
\newcommand{\be}{\begin{equation}}
\newcommand{\ee}{\end{equation}}
\def\<#1,#2>{\langle\,#1,\,#2\,\rangle}
\newcommand{\arr}{\begin{array}{rlll}}
\newcommand{\ea}{\end{array}}
\newcommand{\bea}{\begin{eqnarray}}
\newcommand{\eea}{\end{eqnarray}}
\newcommand{\bean}{\begin{eqnarray*}}
\newcommand{\eean}{\end{eqnarray*}}
\def\sideremark#1{\ifvmode\leavevmode\fi\vadjust{
\vbox to0pt{\hbox to 0pt{\hskip\hsize\hskip1em
\vbox{\hsize3cm\tiny\raggedright\pretolerance10000
\noindent #1\hfill}\hss}\vbox to8pt{\vfil}\vss}}}
\newcounter{ssig}
\newcounter{ttig}
\newcommand{\under}[1]{{\underline{#1}\,}}
\title[Homogeneous models for Levi degenerate CR manifolds]
{Homogeneous models for  \\Levi degenerate CR manifolds}
\address{Andrea Santi, School of Mathematics of the University of Edinburgh,
Kings Buildings, EH$9$ $3$JZ, Edinburgh, Scotland (UK)}
\email{asanti.math@gmail.com}
\thanks{EMPG-15-21}
\keywords{k-nondegenerate CR manifold, Levi degenerate CR manifold, homogeneous models for CR manifolds}
\subjclass[2010]{32V40, 53c30, 22E15}
\author{Andrea Santi}
\begin{document}
\begin{abstract} 
We extend the notion of a fundamental negatively $\bZ$-graded Lie algebra $\gm_x=\bigoplus_{p\leq -1}\gm_x^p$ associated to any point of a Levi-nondegenerate CR manifold
to the class of $k$-nondegenerate CR manifolds $(M,\cD,\cJ)$ for all $k\geq 2$ and call this invariant the core at $x\in M$. It consists of a $\bZ$-graded vector space $\gm_x=\bigoplus_{p\leq k-2}\gm_x^p$ of height $k-2$ endowed with the natural algebraic structure induced by the Tanaka and Freeman sequences
of $(M,\cD,\cJ)$ and the Levi forms of higher order. In the case of CR manifolds of hypersurface type we propose a definition of a homogeneous model of type $\gm$, that is, a homogeneous $k$-nondegenerate CR manifold $M=G/G_o$ with core $\gm$ associated with an appropriate $\bZ$-graded Lie algebra $Lie(G)=\gg=\bigoplus\gg^p$ and subalgebra $Lie(G_o)=\gg_o=\bigoplus\gg_o^p$ of the nonnegative part $\bigoplus_{p\geq 0}\gg^p$. It generalizes the classical notion of Tanaka of homogeneous model for Levi-nondegenerate CR manifolds and the tube over the future light cone, the unique (up to local CR diffeomorphisms) maximally homogeneous $5$-dimensional $2$-nondegenerate CR manifold. We investigate the basic properties of cores and models and study the $7$-dimensional CR manifolds of hypersurface type from this perspective. We first classify cores of $7$-dimensional $2$-nondegenerate CR manifolds up to isomorphism and then construct homogeneous models for seven of these classes. We finally show that there exists a unique core and homogeneous model in the $3$-nondegenerate class.
\end{abstract}
\maketitle
\null \vspace*{-.50in}
\tableofcontents
\section{Introduction}
\setcounter{equation}{0}
\setcounter{section}{1}
A CR structure on a manifold $M$ of dimension $m=2d+c$ is a pair $(\cD, \mathcal J)$
given by a rank $2d$ distribution $\cD\subset TM$ and a smooth family of complex structures $\cJ_x:\cD|_x\longrightarrow\cD|_x$ with the property that the $\cJ$-eigenspace distribution $\cD^{10}\subset T^\bC M$ corresponding to the eigenvalue $+i$ is involutive.
The integers $d,c$ are respectively called  the CR dimension and codimension of the CR structure $(\cD,\cJ)$ and
the complex vector bundles $\cD^{10}$ and  $\cD^{01} = \overline{\cD^{10}}$   are the holomorphic and anti-holomorphic bundles.
We refer the reader to e.g. \cite{BER, DT, Tu} for a general introduction to CR manifolds.

The equivalence problem of CR manifolds, and the associated problem of constructing a full system of invariants which distinguishes one CR manifold from another, is well-understood, at least in the case of strongly regular CR manifolds with \emph{nondegenerate} Levi form (see \cite{CM, Ta0, Ta1}). We recall that the Levi form of a CR manifold $(M,\cD,\cJ)$ is the $\cJ$-Hermitian skew-symmetric bundle map
\beq
\label{intro:lf}
\cL:\cD\times\cD\longrightarrow TM/\cD
\eeq
defined by $\cL(v,w):=[X^{(v)},X^{(w)}]|_x\mod\cD|_x$, where $v,w\in\cD|_x$ and $X^{(v)}$ and $X^{(w)}$ are sections of $\cD$ that extends respectively $v$ and $w$ around $x\in M$. It is the first main invariant of $(M,\cD,\cJ)$ and it is called nondegenerate if for any nonzero $v\in\cD|_x$ there is an element $w\in\cD|_x$ with $\cL(v,w)\neq 0$. The equivalence problem of strongly regular Levi-nondegenerate CR manifolds of any CR dimension and codimension was solved by N.\ Tanaka in \cite{Ta0, Ta1}  using a generalization of the usual prolongation procedure of $G$-structures \cite{St}. He observed that, under fairly general assumptions, any distribution $\cD$ on a manifold $M$ determines a filtration in negative degrees of each tangent space (see \S \ref{sec:2.1} for the definition of this sequence)
\beq
\label{fil:T}
T_xM=\cD_{-\mu}|_{x}\supset\cD_{-\mu+1}|_x\supset\cdots\supset\cD_{-2}|_x\supset\cD_{-1}|_x=\cD|_x\,,
\eeq
and that the associated $\bZ$-graded vector space
$$
\gm_x=\operatorname{gr}(T_xM)=\gm^{-\mu}_x\oplus\gm^{-\mu+1}_x\oplus\cdots\oplus\gm^{-2}_x\oplus\gm^{-1}_x
$$
inherits, by the commutators of vector fields, a natural structure of $\bZ$-graded Lie algebra $\gm_x=\bigoplus_{p\in\bZ}\gm_x^p$  which enjoys the following properties:
\begin{itemize}
\item[(i)] $\gm_x$ is \emph{negatively} $\bZ$-graded, that is $\gm^p_x=0$ for all $p\geq 0$,
\item[(ii)] $\gm_x$ is of \emph{depth $\mu$}, that is $\gm_x^p=0$ for all $p<-\mu$,
\item[(iii)] $\gm_x$ is \emph{fundamental}, that is it is generated by $\gm_x^{-1}$.
\end{itemize} 
He assumed that $(M,\cD)$ is strongly regular of type $\gm$, i.e., with all $\gm_x$ isomorphic to a fixed fundamental $\bZ$-graded Lie algebra $\gm=\gm^{-\mu}\oplus\cdots\oplus\gm^{-1}$, and noted that the presence of an additional geometric datum supported on the distribution $\cD$ corresponds to the assignment of a subalgebra $\gg^0$ of the Lie algebra $\mathfrak{der}(\gm)$ of all zero-degree derivations of $\gm$. In the case of CR manifolds the existence of $\cJ$ corresponds to the existence of a complex structure $J:\gm^{-1}\longrightarrow\gm^{-1}$ satisfying $[Jv,Jw]=[v,w]$ for all $v,w \in\gm^{-1}$, $$\gg^0=\mathfrak{der}(\gm,J)=\left\{X\in\mathfrak{der}(\gm)\,|\,X|_{\gm^{-1}}\circ J=J\circ X|_{\gm^{-1}} \right\}$$
 and the CR structure can be encoded in an appropriate principal bundle $\pi:P\longrightarrow M$ of ``graded frames'' on $M$ with structure group $G^0$, $Lie(G^0)=\gg^0$.

He then showed that any pair $(\gm,\gg^0)$ admits a unique maximal transitive prolongation to positive degrees, a (possibly infinite-dimensional) $\bZ$-graded Lie algebra 
\beq
\label{eq:max}
\gg=\bigoplus_{p\in \bZ} \gg^{p}
\eeq
satisfying:
\begin{enumerate}
\item[(i)]
$\gg^{p}$ is finite-dimensional for every $p\in\bZ$,
\item[(ii)]
$\gg^{p}=\gm^p$ for every $-\mu\leq p\leq -1$, $\gg^{0}$ is equal to the given subalgebra of $\mathfrak{der}(\gm)$ and $\gg^{p}=0$ for every $p<-\mu$,
\item[(iii)]
for all $p\geq 0$, if $X\in\gg^{p}$ is an element such that $[X,\gg^{-1}]=0$, then $X=0$ (\emph{transitivity}),
\item[(iv)]
$\gg$ is \emph{maximal} with these properties.
\end{enumerate}
In the case of CR manifolds and $\gg^0=\mathfrak{der}(\gm,J)$ one has that $\gg$ is of finite type, that is there is a nonnegative integer $\ell$ such that $\gg^{p}=0$ for all $p> \ell$, if and only if for any nonzero $v\in\gm^{-1}$ there is $w\in\gm^{-1}$ with $[v,w]\neq 0$ (see \cite{Ta1}; see also \cite{MeNa1, MeNa2} for further properties of \eqref{eq:max}). This condition clearly corresponds to the nondegeneracy of the Levi form \eqref{intro:lf}. 
In this case the nilpotent Lie group associated with $\gm$ has a natural left-invariant CR structure 
and is locally identifiable with the homogeneous model $M=G/G_o$, where $Lie(G)=\gg$, $Lie(G_o)=\bigoplus_{p\geq 0}\gg^p$.
Moreover there is a finite tower
$$
\cdots\longrightarrow P^{i}\overset{\pi_i}{\longrightarrow} P^{i-1}\longrightarrow\cdots\longrightarrow P^{1}\overset{\pi_1}{\longrightarrow} P\overset{\pi}{\longrightarrow} M
$$
of bundles $\pi_i:P^{i}\longrightarrow P^{i-1}$ with abelian structure groups $\gg^{i}$ with the property that any CR automorphism of $(M,\cD,\cJ)$ admits a unique lift to each term of the tower and thus ending with an automorphism of the absolute parallelism $\pi_{\ell+1}:P^{\ell+1}\longrightarrow P^{\ell}$ (in some notable cases a Cartan connection modeled on $M=G/G_o$, cf. \cite{Ta2, Ta3, AS}).
This reduces the equivalence problem of strongly regular Levi-nondegenerate CR manifolds to that of the associated absolute parallelisms, which can be dealt with classical results \cite{St}. 

Note that an important result of the prolongation procedure is given by the fact that both $\gm$ and $\gg$ are $\bZ$-graded Lie algebras, with compatible $\bZ$-gradings. Indeed the idea of considering
the filtration \eqref{fil:T} was also independently proposed
by B.\ Weisfeiler in \cite{Weis} and the associated $\bZ$-graded Lie algebras of depth $\mu>1$ were used for filling a gap in E.\ Cartan classification of the infinite
primitive Lie algebras of vector fields.

Unfortunately, for \emph{degenerate} CR structures,
the Tanaka approach does not work since the prolongation \eqref{eq:max} is infinite. In this paper we propose
a way to overcome this difficulty.
We recall that the first examples of Levi-degenerate CR manifolds are the products $M=\overline{M}\times \bC^s$ where $\overline M$ is Levi-nondegenerate. More generally any CR manifold $(M,\cD,\cJ)$ is endowed also with a second natural filtration. It is a decreasing filtration of $\cD^{10}|_x$ (see \cite{Fr} and \S \ref{sec:2.1})
\beq
\label{fil:F}
\cD^{10}|_x=\cF_{-1}^{10}|_x\supset\cF^{10}_{0}|_{x}\supset\cF^{10}_{1}|_x \supset \cdots\supset\cF^{10}_{p}|_x\supset\cF^{10}_{ p+1}|_x
\supset\cdots
\eeq
which stabilizes at a certain point $k-1\geq -1$, i.e., with $\cF^{10}_{k-1+p}=\cF^{10}_{k-1}$ for all $p\geq 0$, and with the property that the first stabilizing distribution $\cF^{10}_{k-1}$ is nonzero if and only if $(M,\cD,\cJ)$ is locally a product as above. The CR manifolds $(M,\cD,\cJ)$ with $\cF_{k-1}^{10}=0$ are called k-nondegenerate (for $k=1$ they reduce to the Levi-nondegenerate CR manifolds) and they are not, even locally, products of the form $M=\overline M\times \bC^s$. Our starting point is very simple and can be shortly summarized  as follows: we combine filtrations \eqref{fil:T} and \eqref{fil:F} into a single filtration and consider the resulting
pointwise invariant $\gm_x$ (called \emph{core} throughout the paper). In a sense the main idea of the paper is  complementary to the one of Tanaka and Weisfeiler: the invariant $\gm_x$ not only has a depth $\mu>1$ but, since the CR
manifold $(M,\cD,J)$ is k-nondegenerate, it  also has a nonnegative height. Indeed $\gm_x^p\neq 0$
exactly if $-\mu\leq p\leq k-2$, with case $k = 1$ reducing to the case of nondegenerate CR manifolds and negatively graded Lie algebras.

More precisely the {\bf main aim} of this paper is to introduce a notion of \emph{homogeneous model $M=G/G_o$ of type $\gm$} for $k$-nondegenerate CR manifolds when $k\geq 2$. This has a double motivation. On a one hand it provides with a method to construct CR manifolds with a uniformly degenerate Levi form. 
We remark that the construction of CR manifolds which are uniformly $k$-nondegenerate at all points with $k\geq 2$ is a difficult problem and that homogeneous CR manifolds have been constructed in \cite{FK1, FK2, Fel, KZ, MeNa3}, using various techniques. Our definition of a model is new and  gives another means to build up $k$-nondegenerate homogeneous CR manifolds. We stress that our results are effective and in principle can be  applied  without any restriction on the dimension $m$. Moreover, since the construction of models is tightly related to the cores $\gm$, we also automatically have
that two models $M=G/G_o$ and $M'=G'/G_o'$ of different types $\gm$ and $\gm'$ are not, even locally, CR diffeomorphic.

On the other hand the Lie algebra $\gg=Lie(G)$ of infinitesimal CR automorphisms of $M=G/G_o$ satisfies properties which directly generalize those of the maximal prolongation \eqref{eq:max} and we expect these homogeneous manifolds as natural candidates for solutions of equivalence problems, especially for the construction of Cartan connections. On this regard, we remark that the equivalence problem for all $5$-dimensional $2$-nondegenerate CR manifolds has been recently settled in 
\cite{IZ, MS}. In particular the Cartan connection  ``\'a la Tanaka'' determined  in \cite{MS} is modeled on the projective completion $M=\mathrm{SO}^o(3,2)/G_o$ of the tube over the future light cone (see \cite{FK2, SV} for its main properties) and the tower of fiber bundles constructed as the geometric counterpart of a special $\mathbb Z$-grading of the Lie algebra $Lie(G)=\mathfrak{so}(3,2)$, which is indeed of the form we study in this paper (see \cite[\S 3.2]{MS} and Example \ref{example01}). The solution of the same equivalence problem presented in \cite{IZ} uses different methods and the associated set of invariants does not correspond to a Cartan connection modeled on $M=\mathrm{SO}^o(3,2)/G_o$. The equivalence problem of 
$7$-dimensional $2$-nondegenerate CR manifolds has been recently considered in \cite{Por} under certain additional constraints. The solution is given by an absolute parallelism taking values in $\gg=\mathfrak{su}(2, 2)$ or $\gg=\mathfrak{su}(1,3)$, the algebras of infinitesimal CR automorphisms of two of the seven $7$-dimensional $2$-nondegenerate homogeneous models we obtain in this paper.
\vskip0.15cm\par
We now give the detailed description of the contents of the paper. 
\vskip0.15cm\par
In \S \ref{sec:2.1} we recall the relevant definitions and combine the Tanaka \eqref{fil:T} and Freeman \eqref{fil:F} sequences to construct the core $\gm_x=\bigoplus_{p\leq k-2} \gm_x^p$ associated with a $k$-nondegenerate CR manifold $(M,\cD,\cJ)$ at a point $x\in M$ (Definition \ref{acore} and Lemma \ref{lemmino}). It is an invariant which generalizes the notion of a fundamental algebra of a nondegenerate CR manifold but in sharp contrast with this case it 
turns out that the collection of all Levi forms of higher order does not induce a structure of a Lie algebra on $\gm_x$.
Sections \S \ref{sinizia} and \S\ref{sec:4.1} are therefore entirely dedicated to constructing homogeneous CR manifolds with a given (abstract) core $\gm$. To this aim, we first recall in \S \ref{sec:2.2} the correspondence developed in \cite{MeNa3, Fel} between germs of homogeneous CR manifolds $M=G/G_o$ and CR algebras $(\gg,\gq)$, i.e., Lie algebras of local infinitesimal CR automorphisms, and note that the core can be easily read off from $(\gg,\gq)$. In other words we consider homogeneous CR manifolds from a local point of view. In particular  we will not address in this paper the question of their global existence in full generality, which would involve criteria for the closedness of the isotropy subgroup, but we will easily check that on a case by case basis.

Starting from \S \ref{sinizia} we restrict to the case of CR manifolds of hypersurface type, that is with $c=1$.
We recognize higher order Levi forms as
defining components of the Tanaka-Weisfeiler grading of the real contact algebra $\gc$ (see \cite{MT, SS})
and describe in Proposition \ref{bed} a CR algebra $(\gc,\gu)$ which is \emph{universal} in the sense that 
any abstract core $\gm$ (of hypersurface type) has a natural immersion $\varphi:\gm\longrightarrow\mathfrak{M}$ into the core $\mathfrak{M}$ of $(\gc,\gu)$. We remark that the Lie algebra $\gc$ is infinite-dimensional, consistent with the fact that it has to accomodate for abstract cores $\gm=\bigoplus_{p\leq k-2}\gm^p$ of any possible height $\operatorname{ht}(\gm)=k-2$. In Proposition \ref{noioso} we describe the Lie algebra structure of $\gc$, generalizing a description of Morimoto and Tanaka \cite{MT}  (the recursive expression of the brackets is rather involved but it is fully needed in the construction of the $7$-dimensional $3$-nondegenerate homogeneous model in \S \ref{exexamplesII}).

In Definition \ref{ukip} we give the definition of a \emph{model of type $\gm$} as an appropriate $\bZ$-graded subalgebra $\gg=\bigoplus\gg^p$ of the universal CR algebra tightly related to $\gm$ and prove in Theorem \ref{primoteorema} of \S\ref{sec:4.1} that any model $\gg$ determines a CR algebra $(\gg,\gq)$ with core $\gm$ in a canonical way. We note that the real isotropy algebra $\gg_o$ of $(\gg,\gq)$ satisfies $\gg/\gg_o\simeq\gm$ and that it is a \emph{proper} subalgebra of the nonnegative part $\bigoplus_{p\geq 0}\gg^p$ of $\gg$ whenever $k\geq 2$. Section \ref{ukip} ends with Example \ref{example00} and Example \ref{example01}.  

In \S \ref{sec:7cores}, \S \ref{exexamples} and \S \ref{exexamplesII} we consider applications of the  general theory
to the case of CR manifolds of dimension $m=7$. We remark that this is the smallest possible dimension for a CR hypersurface to be $3$-nondegenerate and that a full classification  of homogeneous $2$-nondegenerate hypersurfaces in dimension $m=5$ up to local CR equivalence has been given in \cite{FK2}. In order to construct models which are inequivalent, we first classify in \S \ref{sec:7cores} the $7$-dimensional abstract cores up to isomorphism, see Theorem 
 \ref{firstcores} and Theorem \ref{secondcores}. This result might be of independent interest, as the core is a basic invariant of any $7$-dimensional CR manifold. The proof relies on a down-to-earth description of representatives for the orbit spaces of the actions of $\mathrm{S0}_{3}(\bR)$ and $\mathrm{S0}^+(2,1)$ on the complex projective plane $\mathbb{P}^2(\bC)$ (see Proposition \ref{firstorbitprop} and Proposition \ref{secondorbitprop}). Finally we introduce a notion of admissibility for an orbit and the associate abstract core in Definition \ref{def:admissibility}.

In \S \ref{exexamples} and \S\ref{exexamplesII} we construct models $\gg$ corresponding to the abstract cores $\gm$ determined in \S \ref{sec:7cores}. In Theorem \ref{mainI} of \S\ref{exexamples} we prove the existence of $\gg$ for all the admissible $2$-nondegenerate abstract cores. 
More precisely we obtain three homogeneous models given by some $\bZ$-gradings $\gg=\bigoplus\gg^p$ of the simple Lie algebras $\gg=\sl_4(\bR)$, $\su(1,3)$ and $\su(2,2)$ in Theorem \ref{thmsimple} and four other models of the form $\gg=\gg^{-2}\oplus\gg^{-1}\oplus\gg^{0}$ in Theorem \ref{mainII}. We finally show in Theorem \ref{thmstrange} that there exists a unique model in the $3$-nondegenerate class.

Before concluding, we recall that a $7$-dimensional $2$-nondegenerate (resp. $3$-nondegenerate) CR manifold locally homogeneous with respect to an algebra $\gg$ isomorphic to $\mathfrak{su}(2,2)$ and $\mathfrak{su}(1,3)$ (resp. with $\dim(\gg)=8$) has also appeared in \cite{KZ, Por} (resp. \cite{FK2}). We also remark that a concept of model has been introduced already in \cite{Bel}, in a more analytic context. Analogies and differences with \cite{Bel} will be discussed elsewhere.
\vskip0.15cm\par\noindent
{\it Notations.} Given a real vector space $V$ we set $V^\times=\left\{v\in V\,|\,v\neq0\right\}$ and $V^\bC=V\otimes \bC$. In the case of models and cores we also use the shortcuts $\wh\gg=\gg\otimes\bC$ and $\wh\gm=\gm\otimes\bC$. 
We denote by $\underline{\mathcal K}$ the space of sections of a bundle $\mathcal K$ on $M$ and decompose any section $X$ of $\cD^{\mathbb C}$ into the sum $X=X^{10}+X^{01}$ of its holomorphic $X^{10}\in\underline\cD^{10}$ and antiholomorphic $X^{01}\in\underline\cD^{01}$ parts.
\vskip0.15cm\par\noindent
{\it Acknowledgments.} The author is supported by a Marie-Curie
research fellowship of the ``Istituto Nazionale di Alta Matematica'' (Italy). 
Part of this work was done while the author was a
post-doc at the University of Parma and he would like to thank the
Mathematics Department and in particular A. Tomassini and C. Medori for
support and ideal working conditions.
\vskip0.1cm\par\noindent
\section{Main definitions and preliminaries}\hfill\par
\subsection{The sequences of Tanaka and Freeman and the abstract cores}\label{sec:2.1}
\setcounter{section}{2}
\setcounter{equation}{0}
The {\it Tanaka sequence} of a CR manifold $(M, \cD, \cJ)$ (see  \cite{Ta1}) is the nested sequence of $\cC^{\infty}(M)$-modules of real vector fields
$$\cdots  \supset  \under\cD_{p-1} \supset  \under\cD_{p} \supset  \under\cD_{p+1}\supset \cdots \supset   \under \cD_{-3} \supset \under \cD_{-2} \supset \under \cD_{-1} 
$$
iteratively defined for any $p\leq-1$ by 
$$
\begin{array}{l}\under\cD_{-1}:=\under\cD
\qquad \text{and}\qquad \under\cD_{p}:=\under\cD_{p+1}+[\under\cD_{-1},\under\cD_{p+1}]
\ \, .
\end{array}
$$
On the other hand, the  {\it  Freeman sequence} (see \cite{Fr})
is a sequence
$$\under\cF_{-1}\supset \under\cF_{0}\supset \under\cF_{1} \supset \cdots\supset \under\cF_{p-1}\supset\under\cF_{ p}\supset\under \cF_{ p+1}
\supset\cdots$$
of complex vector fields given for any $p\geq -1$ by 
\begin{equation*}
\begin{array}{l}
\under \cF_{p}=\under \cF_{p}^{10}\oplus
\overline{\under \cF_{p}^{10}}\qquad \text{where} 
\qquad  \under{\cF}_{-1}^{10}:= \under{\cD}^{10}\qquad\text{and}
\\ \,\\
\!\!\under{\cF}_{p}^{10}:=\left\{X\in \under{\cF}_{p-1}^{10} :[X, \under{\cD}^{01}] = 0^{\phantom{P^C}}\!\!\!\!\!\! \mod  \under{\cF}_{p-1}^{10} \oplus \under{\cD}^{01} \right\}\;;
\end{array}
\end{equation*}
note that $\under\cF_{-1}=\under\cD_{-1}^\bC=\under\cD^\bC$ whereas  $\under{\cF}_{p}^{10}$
for $p\geq 0$ coincides with the left kernel of the Levi form of higher order
\begin{align*}
\cL^{p+1}:&\;\; \under{\cF}_{p-1}^{10}\times \under{\cD}^{01}\longrightarrow \mathfrak{X}(M)^\bC/ (\under{\cF}_{p-1}^{10}\oplus\under{\cD}^{01})\\
&\;(X,Y)\longrightarrow [X,Y]\!\!\!\mod \under{\cF}_{p-1}^{10}\oplus\under{\cD}^{01}\,.
\end{align*}
Each term $\under\cF_p$ of the Freeman sequence is a $\cC^{\infty}(M)$-module and, if $p\geq 0$,
also a Lie subalgebra of $\mathfrak{X}(M)^\mathbb C$. We say that $(M, \cD, \cJ)$ is {\it  regular} if the vector fields in $\under\cD_{p}$ and  $\under{\cF}_{p}^{10}$ are the sections of corresponding distributions
$\cD_{p}\subset TM$ and $\cF_{p}^{10} \subset \cD^{10}$ and if $\cD_{-\mu}=TM$ for some positive integer $\mu$.
From now on,
{\it any CR manifold  is assumed to be regular}. 
\begin{definition}\cite{BER, KZ}  A CR manifold is {\it  $k$-nondegenerate} if   $\cF_p\! \neq\! 0$ for all $-1 \leq$ $p$ $\leq k-2$  and $\cF_{k - 1} \!\!=\! 0$.
\end{definition}
\begin{definition}
\label{acore}
An {\it abstract core} is a finite dimensional $\bZ$-graded real vector space $\gm=\bigoplus_{p\in\bZ}\gm^{p}$ endowed with 
\begin{itemize}
\item[(i)] a complex structure 
\vskip0.01cm
$$
J:\bigoplus_{p\geq -1}\gm^p\longrightarrow\bigoplus_{p\geq -1}\gm^p
$$
compatible with the grading, i.e., with $J(\gm^p)\subset\gm^p$ for all $p\geq -1$. We denote by $\wh\gm^p=\gm^{p(10)}\oplus\gm^{p(01)}$ the corresponding decomposition into $J$-holomorphic and $J$-antiholomorphic parts of $\wh\gm^p=\gm^p\otimes\bC$ for all $p\geq -1$;
\item[(ii)] a bracket of $\bZ$-graded Lie algebras
\vskip0.01cm
$$
[\cdot,\cdot]:\gm_-\wedge\gm_-\longrightarrow\gm_-\;\;,\qquad \gm_-=\bigoplus_{p<0}\gm^p\;\;,
$$
satisfying $[Jv,Jw]=[v,w]$ for all $v,w\in\gm^{-1}$ and nondegenerate and fundamental in Tanaka's sense;
\item[(iii)] an injective and $\bC$-linear map for any $p\geq 0$ 
\vskip0.15cm\par\noindent
$$
\;\;\;\;\;\;\;\;\;\;L^{p+2}: \gm^{p(10)}  \longrightarrow \gm^{p-1(10)}\otimes (\gm^{-1(01)})^*\bigcap\wh\gm^{-2}\otimes S^{p+2}(\gm^{-1(01)})^*
$$
\vskip0.1cm\par\noindent
where $\gm^{-1(10)}$ is understood as space of maps from $\gm^{-1(01)}$ to  $\wh\gm^{-2}$
using bracket (ii).
\end{itemize}
\vskip0.15cm\par\noindent
The core is of {\it depth} d$(\gm)=\mu$ (resp. {\it height} ht$(\gm)=k-2$) if $\gm^p=0$ for all $p<-\mu$ (resp. all $p>k-2$).
A {\it morphism}  of cores $\gm$ and $\gm'$ is a morphism $\varphi:\gm\longrightarrow \gm'$ of real vector spaces such that
\begin{itemize}
\item[(i)] $\varphi(\gm^p)\subset\gm'^p$ for all $p\in\bZ$;
\item[(ii)] $\varphi|_{\gm^p}\circ J=J'\circ\varphi|_{\gm^p}$ for all $p\geq -1$;
\item[(iii)] $\varphi|_{\gm_-}:\gm_-\longrightarrow \gm'_-$ is a morphism of Lie algebras;
\item[(iv)] $\varphi^*(L'^{p+2}\circ\varphi(v))=\varphi\circ L^{p+2}(v)$ for all $v\in\gm^{p(10)}$, $p\geq 0$, where $\varphi$ is extended by $\bC$-linearity. 
\end{itemize}
It is an {\it immersion} (resp. an {\it isomorphism}) if it is injective (resp. bijective).
\end{definition}
Note that any immersion $\varphi$ with $\varphi(\gm^{-1})=\gm'^{-1}$ is fully determined by its action on $\gm_-$, by property (iv). 
The following result recasts the Tanaka and Freeman sequences, and the higher order Levi forms, in the form suitable for our purposes.
\begin{lemma}
\label{lemmino}
Let $(M,\cD,\cJ)$ be a $k$-nondegenerate CR manifold with $\cD_{-\mu}=TM$. For every $x\in M$, the $\bZ$-graded vector space
$
\gm_x=\bigoplus_{p\in \bZ}\gm_x^{p}
$
defined by
\begin{equation*}
\begin{split}
\gm_x^{p}&=\frac{\cD_{p}|_x}{\cD_{p+1}|_x}\;\qquad\qquad\;\,\text{for all}\; p\leq -2\,,\\
\gm_x^{p}&=\frac{\Re(\cF_p)|_x}{\Re(\cF_{p+1})|_x}\;\;\;\qquad\text{for all}\; p\geq -1\,,
\end{split}
\end{equation*}
has the natural structure of an abstract core of depth $\mu$ and height $k-2$.
\end{lemma}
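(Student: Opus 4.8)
The plan is to construct each piece of data in Definition~\ref{acore} directly from the two sequences and to verify well-definedness on the relevant graded quotients. The statements on depth and height are immediate: since $\cD_{-\mu}=TM$ forces $\cD_p=\cD_{-\mu}$ for $p\leq-\mu$, one has $\gm_x^p=0$ for $p<-\mu$; while $k$-nondegeneracy gives $\cF_{k-1}=0$ and $\cF_{k-2}\neq0$, so that $\gm_x^p=\Re(\cF_p)|_x/\Re(\cF_{p+1})|_x$ vanishes for $p\geq k-1$ and is nonzero at $p=k-2$. The complex structure $J$ of (i) is induced by $\cJ$: each Freeman term $\cF_p=\cF_p^{10}\oplus\overline{\cF_p^{10}}$ is invariant under conjugation and under $\cJ$, so $\Re(\cF_p)$ is a $\cJ$-invariant real subspace of $\cD$ and $\cJ$ descends to the graded quotients $\gm_x^p$ for $p\geq-1$, compatibly with the grading.

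For the bracket of (ii) I would use the commutator of vector fields, which by the defining relation $\under\cD_p=\under\cD_{p+1}+[\under\cD_{-1},\under\cD_{p+1}]$ maps $\under\cD_p\times\under\cD_q$ into $\under\cD_{p+q}$ and hence induces on the associated graded of the Tanaka filtration the standard fundamental, negatively graded Lie algebra; this already supplies $\gm_x^p$ for $p\leq-2$ and all brackets landing in those degrees, the independence of the chosen extensions being the usual Tanaka fact. The one point requiring care is that the degree $-1$ piece of the core is not $\cD|_x$ but the quotient $\cD|_x/\Re(\cF_0)|_x$, so I must check that the Tanaka bracket descends modulo $\Re(\cF_0)$. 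The key observation is that the (complexified) Levi kernel acts trivially on the associated graded, namely $[\under{\cF_0},\under{\cD_p^{\bC}}]\subset\under{\cD_p^{\bC}}$ for every $p$: at $p=-1$ this is the kernel condition $[X,\under{\cD^{01}}]\subset\under{\cD^{\bC}}$ for $X\in\cF_0^{10}$ together with the involutivity of $\cD^{10}$ and $\cD^{01}$, and the general case follows by induction on $p$ from the defining relation above and the Jacobi identity. Since bracketing with $\cF_0$ preserves the filtration level, it is trivial in the associated graded, whence the bracket descends to $\gm_x^{-1}=\cD|_x/\Re(\cF_0)|_x$; skew-symmetry, the Jacobi identity and fundamentality are inherited from the Tanaka algebra (deleting from a generating set a subspace that brackets trivially does not change the generated algebra), while the nondegeneracy required in (ii) is exactly what the passage to the quotient by $\Re(\cF_0)$ produces, as no nonzero class in $\gm_x^{-1}$ then lies in the kernel of the induced Levi form. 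The identity $[Jv,Jw]=[v,w]$ on $\gm^{-1}$ is the $\cJ$-Hermitian symmetry of the Levi form, a consequence of the integrability of $\cD^{10}$, and it passes to the quotient.

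It remains to produce the maps $L^{p+2}$ of (iii), which I would read off the higher order Levi forms $\cL^{p+2}$: since $\cF_{p+1}^{10}$ is by definition the left kernel of $\cL^{p+2}$ on $\cF_p^{10}$, the form descends to an injective, $\bC$-linear map on $\gm_x^{p(10)}=\cF_p^{10}|_x/\cF_{p+1}^{10}|_x$ for every $p\geq0$. The substantive work is to identify its target, i.e.\ the intersection displayed in (iii). First one must show that bracketing $X\in\cF_p^{10}$ with $\under{\cD^{01}}$ lowers the Freeman degree by exactly one, so that, using $[X,\under{\cD^{01}}]\subset\under{\cF_{p-1}^{10}}\oplus\under{\cD^{01}}$, the form $L^{p+2}$ is naturally valued in $\gm_x^{p-1(10)}\otimes(\gm_x^{-1(01)})^*$; and second that iterating this procedure $p+2$ times reaches $\wh\gm_x^{-2}$ and is totally symmetric in the $\gm_x^{-1(01)}$ arguments, which is what places the image in $\wh\gm_x^{-2}\otimes S^{p+2}(\gm_x^{-1(01)})^*$. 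I expect this symmetry to be the main obstacle: it is the higher order analogue of the symmetry of the classical Levi form and does not follow formally, but has to be extracted from repeated use of the Jacobi identity together with the involutivity of $\cD^{10}$ and $\cD^{01}$ and the nesting of the Freeman filtration, keeping track of all correction terms modulo $\cF_p^{10}\oplus\cD^{01}$. Once the target is pinned down, the descent of $L^{p+2}$ to the quotient and its $\bC$-linearity complete the verification that $\gm_x$ carries all the structure of Definition~\ref{acore}.
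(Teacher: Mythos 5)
Your proposal follows the paper's own route essentially step for step: depth and height from the definitions, the complex structure $J$ from the $\cJ$-invariance of $\Re(\cF_p)$, the bracket via Tanaka's construction made legitimate by the observation $[\under\cF_0,\under\cD_p]\subset\under\cD_p$ --- which is precisely the key inclusion the paper isolates, and your proof of it (the kernel condition plus involutivity at $p=-1$, then induction on $p$ with the Jacobi identity) is the intended one --- and finally $L^{p+2}$ read off the higher order Levi forms, with injectivity coming from the definition of $\cF_{p+1}^{10}$ and the symmetric target $\wh\gm^{-2}_x\otimes S^{p+2}(\gm_x^{-1(01)})^*$ obtained by induction on $p$; the paper itself dispatches that last point with only the words ``by a direct induction on $p\geq 0$'', so your candor about the symmetry being nontrivial is consistent with, not weaker than, the published argument.

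There is, however, one genuine gap, in the well-definedness of $L^{p+2}$ in its \emph{second} argument. When you assert that the form is ``naturally valued in $\gm_x^{p-1(10)}\otimes(\gm_x^{-1(01)})^*$'', the inclusion you invoke, $[X,\under\cD^{01}]\subset\under\cF_{p-1}^{10}\oplus\under\cD^{01}$, only produces a map on $\cF_p^{10}|_x\times\cD^{01}|_x$; to factor the second slot through the quotient $\gm_x^{-1(01)}=\cD^{01}|_x/\cF_0^{01}|_x$, as Definition \ref{acore}(iii) demands, you additionally need $[\under\cF_p^{10},\under\cF_0^{01}]\subset\under\cF_p^{10}\oplus\under\cD^{01}$, i.e.\ that bracketing with the Levi kernel does not lower the \emph{Freeman} degree. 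This does not follow from your inductive lemma, which concerns the Tanaka filtration $\under\cD_p$ and only shows that the Tanaka level is preserved; already at $p=0$ the needed statement contains the nontrivial fact that $\cF_0=\cF_0^{10}\oplus\cF_0^{01}$ is a Lie subalgebra. The paper quotes exactly this inclusion, in the sharper form $[\under\cF_p^{10},\under\cF_0^{01}]\subset\under\cF_p^{10}\oplus\under\cF_0^{01}$, from the Appendix of \cite{KZ}, and it must either be cited or proved (again by induction with Jacobi, but now along the Freeman sequence). One could alternatively deduce the vanishing on $\cF_0^{01}$ a posteriori from the total symmetry of the iterated form, whose last slot is the classical Levi form and hence kills $\cF_0$; but that requires first defining the iterated map on unquotiented $\cD^{01}$-arguments and proving symmetry there, which your write-up does not set up. Apart from this missing ingredient the proposal is correct and matches the paper's proof.
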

\begin{proof}
The depth and height follow directly from definitions
while point (i) of Definition \ref{acore} from the fact that 
$\cJ$ preserves the real part $\Re(\cF_p)$ of $\cF_p$ for every $p\geq -1$.
The proof of (ii) 
follows the same lines of \cite[\S 1.2]{Ta1} once we observed that
$
[\underline\cF_0, \underline\cD_{p}]\subset\underline\cD_{p}
$
for every $p\leq -1$. Now for every $p\geq 0$ we have $[\underline\cF_p^{10},\underline\cD^{01}]\subset\underline\cF_{p-1}^{10}\oplus\underline\cD^{01}$ and $[\underline\cF_{p}^{10},\underline\cF_{0}^{01}]\subset \underline\cF_{p}^{10}\oplus\underline\cF_0^{01}$ (see \cite[Appendix]{KZ}) and the higher order Levi form induces  a well-defined bilinear map 
$$
\cL^{p+2}_{x}: \gm_x^{p(10)} \times \gm^{-1(01)}_x \longrightarrow \frac{\cF_{p-1}^{10}|_x\oplus\cD^{01}|_x}{\,\,\,\cF_p^{10}|_x\oplus\cD^{01}|_x}\simeq \gm_x^{p-1(10)}
$$
at any $x\in M$. The corresponding map from
$
\gm_x^{p(10)}$ to $\gm_x^{p-1(10)}\otimes (\gm_x^{-1(01)})^*
$
is injective and $\bC$-linear and, by a direct induction on $p\geq 0$,
takes values in the subspace $\wh{\gm}^{-2}_x\otimes S^{p+2}(\gm_x^{-1(01)})^*$ of $\gm_x^{-1(10)}\bigotimes^{p+1}(\gm_x^{-1(01)})^*$. 
\end{proof}
By Definition \ref{acore} and Lemma \ref{lemmino}, the core is an invariant of a CR manifold which generalizes the usual notion of a fundamental algebra associated with a nondegenerate CR manifold (\cite{Ta1}). In contrast with this case, it does not posses any structure of a Lie algebra and the problem of
constructing $k$-nondegenerate CR manifolds with a given core at all points is more delicate. To do so, we first need to recall the notions of homogeneous CR manifolds and their associated CR algebras.
\smallskip\par
\subsection{Homogeneous CR manifolds and CR algebras}
\label{sec:2.2}
Let $(M,\cD,\cJ)$ be a CR manifold that is locally homogeneous around a point $x\in M$ under a (possibly infinite-dimensional) Lie algebra $\gg$ of infinitesimal CR automorphisms. Transitivity of the action amounts to
 $T_xM=\left\{Z|_x\;|
\;Z\in \gg\right\}$ while
$$
\gq=\left\{Z\in\wh\gg\;\;|\!\!\!\!\!\!\!\!\!\!\phantom{C^{C^C}}Z|_x\in \cD^{10}|_x\right\}
$$
is a complex Lie subalgebra of $\wh\gg=\gg\otimes\bC$, by the integrability condition of CR manifolds.   
In the terminology of \cite{MeNa3} the pair $(\gg,\gq)$ is an {\it abstract CR algebra}, i.e., it enjoys:
\begin{itemize}
\item[(i)] $\gg$ is a real Lie algebra,
\item[(ii)] $\gq$ is a complex subalgebra of $\wh{\gg}$,
\item[(iii)] the quotient $\gg/\gg_{o}$ is finite-dimensional, where $$\gg_{o}=\gg\cap\gq=\Re(\gq\cap\overline\gq)=\left\{Z\in\gg\,|\!\!\!\!\!\!\!\!\!\!\!\phantom{C^{C^C}}Z|_x=0\right\}\ $$
is the real isotropy algebra at $x$.
\end{itemize}
Conversely any abstract CR algebra determines a unique (germ of) locally homogeneous CR manifold $(M,\cD,\cJ)$ with 
\begin{equation}
\label{itangente}
\begin{split}
T_xM&\simeq \gg/\gg_{o}\;,
\\
\cD^{10}|_x&\simeq\gq/\gq\cap\bar\gq\;,
\end{split}
\end{equation}
see \cite{MeNa3} and also \cite[\S 4]{Fel} for more details. The associated core at $x\in M$ can also be directly recovered from the CR algebra. First note that the Freeman bundles are locally homogeneous bundles with fiber $\cF_{p}^{10}|_{x}\simeq \gq_{p}/\gq\cap\overline\gq$ where
\begin{align}
\label{bologna}
\notag\gq_{p}&=\left\{Z\in \wh{\gg}\,|\!\!\!\!\!\!\!\!\!\!\!\phantom{C^{C^C}} Z|_x\in \cF_{p}^{10}|_x\right\}\\
&=\left\{Z\in\gq_{p-1}\,|\!\!\!\!\!\!\!\!\!\!\!\phantom{C^{C^C}}[Z,\overline\gq]\subset \gq_{p-1}+\overline\gq\right\}
\end{align}
and $\gq_{-1}=\gq\supset \gq_0\supset \cdots\supset \gq_{p-1}\supset \gq_p\supset \gq_{p+1}\supset\cdots
\supset \gq\cap\overline\gq$ is the associated sequence of complex subalgebras 
of $\gq$ (see \cite{Fel}). The homogeneous CR manifold is $k$-nondegenerate precisely when $\gq_{k-1}=\gq\cap\overline\gq$ and $\gq_{k-2}\neq\gq\cap\overline\gq$. Similarly the fibers $\cD_{p}|_{x}\simeq \gg_{p}/\gg_o$ of the Tanaka bundles correspond   to the sequence
 $
\gg_{-\mu}\supset\cdots\supset\gg_{p-1}\supset\gg_{p}\supset\gg_{p+1}\supset\cdots\supset\gg_{-2}\supset\gg_{-1}
$ of subspaces
\beq
\label{prato}
\gg_{-1}=\Re(\gq+\bar\gq)\;,\qquad\gg_{p}=\gg_{p+1}+[\gg_{-1},\gg_{p+1}]\;.
\eeq
It is not difficult to see that $[\gg_p,\gg_q]\subset\gg_{p+q}$ for all $p,q\leq -1$.
It follows from these observations that
\begin{align}
\label{corecore}
&\gm^{p}_x\simeq\gg_{p}/\gg_{p+1}\qquad\qquad\qquad\text{for all}\; p\leq -2\,,\\
  \begin{split}
  \label{corecore2}
	&\gm_x^p\simeq \Re(\frac{\gq_p+\overline\gq_p}{\gq_{p+1}+\overline\gq_{p+1}})
	\\
    &\gm^{p(10)}_x\simeq\gq_p/\gq_{p+1}
  \end{split}\qquad\,\text{for all}\; p\geq -1\,,
\end{align}
as (real or complex) vector spaces and 
that the Lie bracket of $\gm_-$ is induced by the Lie bracket of $\gg$. Finally 
$[\gq_p,\overline\gq_0]\subset \gq_p+\overline\gq_0$ 
for all $p\geq 0$ by a direct induction and the Lie bracket of $\gg$ also induces an operation on the quotients
$$[\gm^{p(10)}_x, \gm^{-1(01)}_x]\subset \frac{\gq_{p-1}+\overline\gq}{\;\;\;\,\gq_p+\overline\gq}\simeq \gm^{p-1(10)}_x$$
and, in turn, the required immersion $L^{p+2}$ of Definition \ref{acore}. 

In view of \S \ref{sinizia}  it is convenient to relax  the definition of a CR algebra, including those pairs which satisfy (i) and (ii), but not necessarily (iii). Informally speaking we are considering infinite-dimensional locally homogeneous CR manifolds but we will not attempt to rigorously define  such objects. Note however that the r.h.s. of \eqref{itangente} and \eqref{bologna}-\eqref{corecore2} still make sense and  
we may say that $(\gg,\gq)$ is holomorphically nondegenerate when
$
\bigcap_{p\geq -1} \gq_{p}=\gq\cap\overline\gq
$ (this corresponds to usual $k$-nondegeneracy of some order $k$ whenever $\dim(\gg/\gg_o)<+\infty$). 

In \S\ref{univ} and \S\ref{cisiamo} we consider the opposite problem of associating a CR algebra
to an abstract core. {\it We restrict to CR manifolds} $(M,\cD,\cJ)$ {\it of hypersurface type and therefore to cores $\gm$ of depth d$(\gm)=2$ and $\dim(\gm^{-2})=1$.}
We first describe in \S\ref{univ} a holomorphically nondegenerate CR algebra $(\gc,\gu)$ 
that is universal, in the sense that
any abstract core $\gm$ has a natural immersion $\varphi:\gm\longrightarrow\mathfrak{M}$ into the core $\mathfrak{M}$ of $(\gc,\gu)$, see Proposition \ref{autoval}. 
This universal property will be crucial to constructing appropriate Lie subalgebras of $(\gc,\gu)$ and their associated homogeneous CR manifolds.
\section{The universal CR algebra}\label{univ}
\label{sinizia}
\setcounter{section}{3}
\setcounter{equation}{0}
The universal CR algebra $(\gc,\gu)$ is given by the real infinite-dimensional contact algebra $\gc$ and an appropriate complex subalgebra $\gu$ of its complexification;   
we first study the structure of contact algebras over $\bK=\bC$, $\bR$ simultaneously.
Let $\gc_-=\gc^{-2}\oplus\gc^{-1}$ be the Heisenberg algebra of degree $n$, the fundamental $\bZ$-graded Lie algebra over $\mathbb K$ such that:
\begin{itemize}
\item[(i)] $\dim \gc^{-2}=1$,
\item[(ii)] $\dim\gc^{-1}=2n$,
\item[(iii)] $\gc_-$ is nondegenerate (if $v\in\gc^{-1}$ satisfies $[v,\gc^{-1}]=0$ then $v=0$).
\end{itemize} 
It is well-known that the maximal transitive prolongation of $\gc_-$ is an infinite-dimensional simple Lie algebra with the grading \
\beq
\label{contatto}
\gc=\bigoplus_{p\geq -2}\gc^{p}\,,
\eeq usually referred to as the {\it contact algebra} of degree $n$ \cite{MT}. We identify $\gc^{-2}$ with the ground field using the isomorphism $\pi_{\gc^{-2}}:\gc^{-2}\longrightarrow \bK$ associated with a basis $\{e^{-2}\}$ of $\gc^{-2}$ and denote by
$B:\gc^{-1}\wedge \gc^{-1}\longrightarrow\bK$ the symplectic form
\beq
\label{simple}
[v,w]=B(v,w)e^{-2}\;,\qquad v,w\in\gc^{-1}\;.
\eeq
For all $p\geq -2$, we denote the subspace of $\gc^p$ which acts trivially on $\gc^{-2}$ by
$$\gk^p=\left\{X\in\gc^p\,|\phantom{C^{C^C}}\!\!\!\!\!\!\!\!\!\![X,\gc^{-2}]=0\right\}$$ 
and note that $\gk^{-2}=\gc^{-2}$, $\gk^{-1}=\gc^{-1}$ and
$$
[\gk^p,\gc^{-1}]\subset \gk^{p-1}\;,\qquad[\gk^{p},\gk^{q}]\subset\gk^{p+q}\;,
$$
for all $p,q \geq -1$, since \eqref{contatto} is a $\mathbb Z$-graded Lie algebra. 

Let $E$ be the element of $\gc^0$ which satisfies
$[E,X]=pX$ for every $X\in\gc^p$, it is called the {\it grading element}. The $0$-degree part of the contact algebra has a direct sum decomposition
$\gc^0=\gk^0\oplus\bK E$, with $\gk^0=\sp(\gc^{-1})=\sp(\gc^{-1},B)$. We consider the usual identification $\sp(\gc^{-1})\simeq S^2(\gc^{-1})$ where 
the bracket of an element $v_1\odot v_2\in \gk^0$ with $w\in\gc^{-1}$ takes the form
\beq
\label{identificationsymmetric}
[v_1\odot v_2, w]=v_1 B(v_{2},w)+v_2 B(v_1,w)\ .
\eeq
It is proved in \cite{MT, SS} that there exist similar identifications  $\gk^{p}\simeq S^{p+2}(\gc^{-1})$ which are $\gk^0$-equivariant and such that the bracket of
$
X=v_1\odot \cdots \odot v_{p+2}\in\gk^{p}$ with $Y=w_1\odot\cdots\odot w_{q+2}\in\gk^{q}
$ is
$$
[X,Y]=\sum_{i=1}^{p+2}\sum_{j=1}^{q+2}v_1\odot\cdots\odot \wh{v}_i\odot \cdots \odot v_{p+2}\odot w_1\odot\cdots \odot\wh{w}_{j}\odot\cdots \odot w_{q+2} B(v_i,w_j)\,,
$$
where $\wh{v}$ indicates that the vector $v\in\gc^{-1}$ is omitted. The following result was also proved by Morimoto and Tanaka.
\begin{proposition}\cite{MT}
\label{MoriTan}
Let $e^{-2}$ be a basis of $\gc^{-2}$. Then $\ad(e^{-2})$ is a surjective and $\gk^0$-equivariant map from $\gc^{p}$ to $\gc^{p-2}$ with kernel $\gk^p$, for every $p\geq 0$. Moreover there exists a unique $\gk^0$-module $\xi^p$ which is isomorphic to $\gc^{p-2}$ and complementary to $\gk^p$ in $\gc^p$.
\end{proposition}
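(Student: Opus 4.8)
My plan is to dispatch the kernel and equivariance statements formally and then reduce the claim on $\xi^p$ to surjectivity together with the representation theory of $\gk^0=\sp(\gc^{-1})$; I expect surjectivity to be the one genuinely substantive point. The kernel assertion is immediate: since $\gc^{-2}=\bK e^{-2}$ is one-dimensional, by definition $\gk^p=\{X\in\gc^p:[X,\gc^{-2}]=0\}=\ker(\ad(e^{-2})|_{\gc^p})$. For $\gk^0$-equivariance I would invoke the Jacobi identity: if $Z\in\gk^0$ and $X\in\gc^p$, then $[e^{-2},[Z,X]]=[[e^{-2},Z],X]+[Z,[e^{-2},X]]=[Z,[e^{-2},X]]$, using $[e^{-2},Z]=-[Z,\gc^{-2}]=0$. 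Hence $\ad(e^{-2})\circ\ad(Z)=\ad(Z)\circ\ad(e^{-2})$ on $\gc^p$, so $\ad(e^{-2})$ intertwines the $\gk^0$-actions on $\gc^p$ and $\gc^{p-2}$.

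For surjectivity I would pass to the explicit realization of the contact algebra $\gc$ as (polynomial) contact Hamiltonians (cf.\ \cite{MT}). Choosing Darboux coordinates $(x,y_1,\dots,y_n,z_1,\dots,z_n)$ with Reeb direction $\partial_x$, and grading the polynomial algebra by the weight that assigns $2$ to $x$ and $1$ to each $y_i,z_i$, one identifies $\gc^p$ with the weighted-homogeneous polynomials of weight $p+2$; here $e^{-2}$ corresponds to the constant $1$ and the contact bracket yields $\ad(e^{-2})=\{1,\cdot\,\}=\partial_x$. Surjectivity of $\partial_x$ from weight-$(p+2)$ onto weight-$p$ polynomials is then transparent, since every monomial $x^a m(y,z)$ of weight $p$ equals $\partial_x\bigl(\tfrac{1}{a+1}x^{a+1}m\bigr)$; it holds exactly when weight-$p$ polynomials exist, i.e.\ precisely for $p\geq 0$, while $\ker\partial_x$ consists of the polynomials in $y,z$ alone, recovering $\gk^p\simeq S^{p+2}(\gc^{-1})$. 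This is the step I expect to carry the real weight: the positivity hypothesis $p\geq 0$ enters here in an essential way, and the argument rests on correctly matching the weighted grading of the Hamiltonian model with the $\bZ$-grading of $\gc$. (One could instead stay inside the prolongation and construct, for each target $Y\in\gc^{p-2}$, a degree-$p$ derivation $u$ of $\gc_-$ with prescribed value $u(e^{-2})=-Y$, but the Hamiltonian realization makes surjectivity immediate.)

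Finally, granting surjectivity and equivariance, $\ad(e^{-2})$ is a surjective $\gk^0$-module map $\gc^p\to\gc^{p-2}$ with kernel $\gk^p$, so $\gc^p/\gk^p\simeq\gc^{p-2}$ as $\gk^0$-modules. Since $\gk^0=\sp(\gc^{-1})$ is semisimple, every finite-dimensional $\gk^0$-module is completely reducible, which produces a complementary submodule $\xi^p$ with $\ad(e^{-2})\colon\xi^p\xrightarrow{\sim}\gc^{p-2}$. For uniqueness I would run an induction on $p$ showing that $\gc^p$ is multiplicity-free: the base cases $\gc^{-2}\simeq S^0$, $\gc^{-1}=S^1(\gc^{-1})$ and $\gc^0=\gk^0\oplus\bK E\simeq S^2(\gc^{-1})\oplus S^0$ are clear, and the isomorphism $\gc^p\simeq\gk^p\oplus\gc^{p-2}$ propagates the decomposition $\gc^p\simeq S^{p+2}(\gc^{-1})\oplus\bigoplus_{j\geq 1}S^{p+2-2j}(\gc^{-1})$. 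As the symmetric powers of the standard representation of $\sp(\gc^{-1})$ are irreducible and pairwise non-isomorphic, $\gk^p\simeq S^{p+2}(\gc^{-1})$ is a single isotypic component of multiplicity one; its $\gk^0$-complement, namely the sum of the remaining isotypic components, is therefore forced, which gives the uniqueness of $\xi^p$.
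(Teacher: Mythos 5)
Your proof is correct, but it takes a different route from the one the paper points to: the paper does not prove Proposition \ref{MoriTan} itself, it cites \cite{MT} and notes that the original argument rested on constructing (not necessarily $\gk^0$-equivariant) immersions $\mu^p:\gc^{p-2}\longrightarrow\gc^p$, a construction the paper then refines recursively via conditions $(\rm{C}1)$--$(\rm{C}2)$ in Proposition \ref{noioso}. You replace that explicit construction by structural arguments: surjectivity via the graded identification of $\gc$ with weighted polynomial contact Hamiltonians, where $\ad(e^{-2})=\{1,\cdot\}$ is (up to the harmless normalization $\{1,g\}=2\,\partial_x g$ coming from the Lagrange bracket) the derivative $\partial_x$, which is visibly onto between the weight spaces; existence of the complement from Weyl complete reducibility of the finite-dimensional $\sp(\gc^{-1})$-module $\gc^p$ (finite-dimensionality being part of the definition of the maximal transitive prolongation); and uniqueness from multiplicity-freeness, since the induction $\gc^p\simeq\gk^p\oplus\gc^{p-2}$ yields $\gc^p\simeq\bigoplus_j S^{p+2-2j}(\gc^{-1})$ and the symmetric powers of the standard $\sp_{2n}$-module are irreducible and pairwise inequivalent, so the sum of the isotypic components other than $\gk^p\simeq S^{p+2}(\gc^{-1})$ is the \emph{only} invariant complement (in fact unique among all complements, not just those isomorphic to $\gc^{p-2}$). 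Your kernel and equivariance steps are exactly right ($\dim\gc^{-2}=1$ and the Jacobi identity with $[Z,e^{-2}]=0$ for $Z\in\gk^0$). The trade-off: your argument is shorter and conceptual, and your multiplicity-one induction quietly re-derives the decomposition \eqref{gradop}; the paper's recursive approach instead produces the explicit equivariant maps $\mu^{p|p-2i}$ and the bracket formulas \eqref{cash}--\eqref{june}, which are indispensable later in \S \ref{exexamplesII} and which your abstract splitting does not supply. The one step you should make fully explicit is the graded identification of the abstract prolongation $\gc$ with the weighted Hamiltonian model (matching $e^{-2}$ with the constant $1$ and the grading element with the weighted Euler field); this is standard and is precisely the realization in \cite{MT}, but your surjectivity claim passes entirely through it.
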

Their proof relied on the existence of appropriate, but not necessarily $\gk^0$-equivariant, immersions $\mu^p:\gc^{p-2}\longrightarrow\gc^p$. We now give an improved version of this result and an explicit description of the decomposition $\gc^p=\gk^p\oplus\xi^{p}$. 
More precisely we give maps $\mu^p$ which are $\gk^0$-equivariant, together with conditions which guarantee their uniqueness, and explicitly describe the Lie brackets between the different irreducible $\gk^0$-submodules of $\gc$ (this description will be relevant only in \S \ref{exexamplesII} and it can be skipped on a first reading). To formulate the result, we set 
\begin{equation*}
\begin{split}
\mu^{-1}=0\;\;,\qquad\mu^{p|p+2}=\Id_{\gk^p}\qquad\text{and, for any}\;0\leq i\leq [p/2]\,,\\
\mu^{p|p-2i}=\mu^{p}\circ\mu^{p-2}\circ\cdots\circ\mu^{p-2i}|_{\gk^{p-2i-2}}:\gk^{p-2i-2}\longrightarrow \gc^{p}\;.
\end{split}
\end{equation*}
We adopt the convention that the binomial coefficient $\binom{-1}{0}=1$ while $\binom{k-1}{k}$ is trivial for any integer $k\geq 1$.
\begin{proposition}
\label{noioso}
There exists a $\gk^0$-equivariant immersion
$
\mu^p:\gc^{p-2}\longrightarrow\gc^p
$ with image $\xi^p=\Im(\mu^p)$
and a decomposition $\gc^{p}=\gk^p\oplus\xi^p$ of $\gk^0$-modules for all $p\geq 0$. 
If
$
\pi_{\gk^p}:\gc^p\longrightarrow\gk^p$ and $\pi_{\xi^p}:\gc^p\longrightarrow\xi^p
$
are the corresponding projection operators, there is a unique choice of $\mu^p$'s such that
the following conditions are satisfied
$$
\begin{dcases}
\mu^{p}(X)e^{-2}=X\,\;\;\;\;\;\;\;\;\;\;\;\;\;\;\;\;\qquad\qquad\qquad\qquad\qquad\qquad\qquad\qquad\qquad\,(\rm{C}1)\\
\mu^{p}(X)v= \mu^{p-1}[X,v]+\frac{1}{2}\pi_{\gk^{p-2}}(X)\odot v\in\xi^{p-1}\oplus\gk^{p-1} \;\text{for all}\;v\in\gc^{-1}\;\;(\rm{C}2)
\end{dcases}
$$
for all $X\in\gc^{p-2}$. In this case each map $\mu^{p|p-2i}$ 
is a $\gk^0$-equivariant immersion and
$\gc^p$ decomposes into irreducible inequivalent $\gk^0$-modules as follows:
\begin{equation}
\label{gradop}
\begin{split}
\gc^p\simeq\!\!\!\!\!\! \bigoplus_{-1\leq i\leq [p/2]}\!\!\!\!\!\! S^{p-2i}(\gc^{-1})\;\;\;,\qquad S^{p-2i}(\gc^{-1})\simeq\Im(\mu^{p|p-2i})\qquad{with}\\
\!\!\!\!\gk^p\simeq S^{p+2}(\gc^{-1})\qquad\text{and}\qquad\xi^p\simeq\!\!\!\!\!  \bigoplus_{0\leq i\leq [p/2]}\!\!\!\!\!S^{p-2i}(\gc^{-1})\,.
\end{split}
\end{equation}
Furthermore:
\vskip0.2cm\par
(i) The bracket of $X\in\gk^p$  with $\mu^{q|q-2j}(Y)\in\xi^q$ is in $\gc^{p+q}$ and
\vskip0.4cm\par\noindent
\beq
\label{cash}
[X,\mu^{q|q-2j}(Y)]=\underbrace{\frac{p}{2}\mu^{p+q|p+q+2-2j}(X\odot Y)}_{\text{elem. of}\; S^{p+q+2-2j}(\gc^{-1})}+\underbrace{\mu^{p+q|p+q-2j}[X,Y]}_{\text{elem. of}\; S^{p+q-2j}(\gc^{-1})}
\eeq
\vskip0.4cm\par\noindent
where $p\geq -1$, $q\geq 0$ and $0\leq j\leq[q/2]$.
\vskip0.3cm\par
$(ii)$ The bracket of $\mu^{p|p-2i}(X)\in \xi^p$ and $\mu^{q|q-2j}(Y)\in\xi^q$ is in $\xi^{p+q}$ and
\vskip0.4cm\par\noindent
\beq
\label{june}
\begin{split}
[\mu^{p|p-2i}(X),\mu^{q|q-2j}(Y)]=& \alpha(p,i;q,j)\underbrace{\mu^{p+q|p+q-2i-2j}(X\odot Y)}_{\text{elem. of}\;S^{p+q-2i-2j}(\gc^{-1})}\\
&\;\;+\beta(i;j)\underbrace{\mu^{p+q|p+q-2i-2j-2}[X,Y]}^{\phantom{C^{C^C}}}_{\text{elem. of}\;S^{p+q-2i-2j-2}(\gc^{-1})}\,,
\end{split}
\eeq
\vskip0.4cm\par\noindent
where $p,q\geq 0$, $0\leq i\leq [p/2]$, $0\leq j\leq [q/2]$ and
the coefficients $\alpha$ and $\beta$ are respectively given by
$$
\alpha(p,i;q,j)=\frac{p-2i-2}{2}\{\sum_{k=0}^{j}\binom{i+k-1}{k} (j+1-k)\}
$$
$$
\phantom{cccccccccci}-\frac{q-2j-2}{2}\{\sum_{k=0}^{i}\binom{j+k-1}{k} (i+1-k)\}
$$
and
$$
\beta(i;j)=\sum_{k=0}^{j}\binom{i+k-1}{k} (j+1-k)+\sum_{k=0}^{i}\binom{j+k-1}{k} (i+1-k)\ .
$$
\end{proposition}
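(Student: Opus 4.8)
The plan is to establish all the assertions of Proposition~\ref{noioso} simultaneously, by a single induction on $p\geq 0$, the statements for degree $p$ being: the $\gk^0$-module decomposition \eqref{gradop}, the existence and uniqueness of $\mu^p$ subject to (C1)--(C2), the $\gk^0$-equivariance of the iterated maps $\mu^{p|p-2i}$, and the bracket identities \eqref{cash}--\eqref{june} in which one argument has degree $\leq p$. The structural input is Proposition~\ref{MoriTan}, which for each $p\geq 0$ exhibits $\gc^{p-2}$ as (isomorphic to) the unique $\gk^0$-invariant complement $\xi^p$ of $\gk^p$ in $\gc^p$. Since $\gk^p\cong S^{p+2}(\gc^{-1})$ and, by the inductive hypothesis, $\gc^{p-2}\cong\bigoplus_{-1\leq i\leq[(p-2)/2]}S^{p-2-2i}(\gc^{-1})$, a reindexing yields \eqref{gradop}; the irreducibility and pairwise inequivalence of the $\gk^0=\sp(\gc^{-1})$-modules $S^m(\gc^{-1})$ is standard symplectic representation theory and, via Schur's lemma, makes every equivariant map between these summands a scalar, which is what ultimately forces the normalisations in (C1)--(C2). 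I would take $\mu^{p|p-2i}$ to be the iterated compositions defined before the statement, with $\mu^{p|p+2}=\Id_{\gk^p}$ and $\mu^{-1}=0$ seeding the recursion and the base cases $\gc^0=\gk^0\oplus\bK E$, $\xi^0=\bK E\cong\gc^{-2}$ treated by hand.

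Inside the induction I would first settle the characterisation of $\mu^p$. \emph{Uniqueness} is immediate: if $Y,Y'\in\gc^p$ both obey (C2) then $[Y-Y',v]=0$ for every $v\in\gc^{-1}$, so $Y=Y'$ by transitivity of the prolongation. Next, (C1) is \emph{not} an independent requirement for $p\geq 2$: applying $\ad(e^{-2})$ to (C2) and using the identity $[\ad(e^{-2})Y,v]=\ad(e^{-2})[Y,v]$ --- valid because $[\gc^{-2},\gc^{-1}]=0$, so that the correction term $\tfrac12\pi_{\gk^{p-2}}(X)\odot v$ lies in $\ker\ad(e^{-2})=\gk^{p-1}$ --- one gets $[\ad(e^{-2})Y-X,v]=0$ for all $v$, whence (C1) again by transitivity (the cases $p=0,1$ being base cases). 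There remains \emph{existence}: for fixed $X\in\gc^{p-2}$ the right-hand side of (C2) prescribes a linear map $\phi_X:\gc^{-1}\to\gc^{p-1}$, and I must show it is the restriction to $\gc^{-1}$ of an element of the maximal prolongation $\gc^p$. Concretely this is the integrability (``symbol'') condition that the $\gc^{p-2}$-valued form $(u,w)\mapsto[\phi_X(u),w]+[u,\phi_X(w)]$ be proportional to the symplectic form $B(u,w)$; once this holds the derivation extends and defines $\mu^p(X)$, while its $\gk^0$-equivariance follows from that of $\phi_X$ together with uniqueness. Verifying this proportionality is where the lower-degree bracket formulas \eqref{cash}--\eqref{june} are fed back in, and I expect it, together with the combinatorics below, to be the \textbf{main obstacle}: an arbitrary lift $Y_0\in\gc^p$ with $\ad(e^{-2})Y_0=X$ already has the correct $\xi^{p-1}$-component $\mu^{p-1}[X,v]$ in $[Y_0,v]$ (forced by the $\ad(e^{-2})$ identity and (C1) for $\mu^{p-1}$), so existence reduces to adjusting the $\gk^{p-1}$-component by some $Z\in\gk^p$ into the totally symmetric form $\tfrac12\pi_{\gk^{p-2}}(X)\odot v$, which is exactly a statement about how the known brackets in degrees $<p$ symmetrise.

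Granting $\mu^p$, the remaining task is to propagate the bracket identities to degree $p$. I would prove \eqref{cash} first (the bracket of $X\in\gk^p$ against $\mu^{q|q-2j}(Y)\in\xi^q$) and then \eqref{june}, in each case viewing both sides as elements of $\gc^{p+q}$ and, by transitivity, comparing their brackets with an arbitrary $v\in\gc^{-1}$. Expanding $[\,\cdot\,,v]$ through the Jacobi identity replaces each bracket by brackets in strictly lower total degree: $[X,v]$ for $X\in\gk^p$ is given by the Morimoto--Tanaka contraction formula on $S^{\bullet}(\gc^{-1})$, whereas $[\mu^{q|q-2j}(Y),v]$ is resolved by (C2) into its $\xi^{q-1}$ and $\gk^{q-1}$ parts, so that the inductive hypotheses (and, for \eqref{june}, the already-proved \eqref{cash}) apply. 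The coefficient $\tfrac{p}{2}$ in \eqref{cash} and the coefficients $\alpha(p,i;q,j),\beta(i;j)$ in \eqref{june} are precisely the bookkeeping of how the $\tfrac12\pi_{\gk}\odot v$ term of (C2) is created and accumulated along this recursion; matching the recursively generated data with the claimed closed forms is the combinatorial heart of the argument and is where the binomial identities --- of Vandermonde/hockey-stick type, with the conventions $\binom{-1}{0}=1$ and $\binom{k-1}{k}=0$ --- are needed. A structural simplification I would exploit throughout is that in each of \eqref{cash} and \eqref{june} the symmetric-product term and the Lie-bracket term lie in \emph{inequivalent} $\gk^0$-submodules, the two differing by $S^{m}(\gc^{-1})$ versus $S^{m-2}(\gc^{-1})$; by Schur they cannot mix, so the two families of coefficients may be determined independently, each by its own recursion. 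As an independent check on the constants I would use the concrete realisation of $\gc$ as weighted polynomials on a contact space, in which $\gk^p$ consists of the polynomials independent of the contact variable $t$, each $\mu^{p|p-2i}$ is (a normalisation of) multiplication by $t^{\,i+1}$, and the contact bracket reproduces the same binomial factors through the Leibniz rule.
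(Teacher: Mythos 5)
Your proposal is correct and takes essentially the same route as the paper, whose proof of Proposition \ref{noioso} consists precisely of the strategy you describe --- ``a straightforward but rather long modification of original arguments of Morimoto and Tanaka in \cite{MT} combined with repeated induction arguments'' --- with all details omitted for brevity. Your outline (uniqueness and $\gk^0$-equivariance via transitivity of the prolongation, deriving (C1) from (C2) by applying $\ad(e^{-2})$, the symbol condition for existence of $\mu^p(X)$, and Schur's lemma separating the inequivalent $S^{m}(\gc^{-1})$- and $S^{m-2}(\gc^{-1})$-components so that the recursions for $\alpha$ and $\beta$ decouple) in fact supplies more structure than the published proof records.
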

\begin{proof}
It is a straightforward but rather long modification of original arguments of Morimoto
and Tanaka in \cite{MT} combined with repeated induction arguments. The details are omitted for the sake of brevity.
\end{proof}
From now on we will always denote by  \eqref{gradop} the decomposition of $\gc$ determined by the $\gk^0$-equivariant maps $\mu^{p|p-2i}$ described in Proposition \ref{noioso}. Unless there is a risk of confusion, we omit the immersion $\mu^{p|p-2i}$ and tacitly identify its image $\Im(\mu^{p|p-2i})$ in $\gc^p$ with 
$S^{p-2i}(\gc^{-1})$. 

Let now $\gc$ be the {\it real} contact algebra and $\wh\gc=\gc\otimes\bC$. We say that a complex structure $\mathfrak{J}$ on $\gc^{-1}$ has signature $\operatorname{sgn}(\mathfrak{J})=(r,s)$, $n=r+s$, if $\mathfrak{J}\in\gk^0=\sp(\gc^{-1})$ and 
the associated Hermitian product has signature $(r,s)$. We fix $\mathfrak{J}$ once and for all
and identify the space of all complex structures of signature $(r,s)$ with $\Sp_{2n}(\bR)/\mathrm{U}(r,s)$.
Clearly $\hat{\gc}^{-1}$ decomposes into $\hat{\gc}^{-1}=\gc^{-1(10)}\oplus\gc^{-1(01)}$ with 
\begin{align*}
\gc^{-1(10)}&=\left\{\phantom{C^{C^C}}\!\!\!\!\!\!\!\!\!\!\!\! v-i\gJ v\,|\, v\in\gc^{-1}\right\}\;,\\
\gc^{-1(01)}&=\overline{\gc^{-1(01)}}=\left\{\phantom{C^{C^C}}\!\!\!\!\!\!\!\!\!\!\!\! v+i\gJ v\,|\, v\in\gc^{-1}\right\}\;.
\end{align*} 
Similarly we have the following.
\begin{definition}
We denote by $S^{\ell,p-2j-\ell}=S^{\ell}(\gc^{-1(10)})\otimes S^{p-2j-\ell}(\gc^{-1(01)})$ the $\ad(\mathfrak J)$-eigenspace in $S^{p-2j}(\hat\gc^{-1})$ given by $\left.\ad(\gJ)\phantom{\frac{}{\partial}}\!\!\!\!\!\right|_{S^{\ell,p-2j-\ell}}=i(2\ell-p+2j)\Id$.
\end{definition}
It follows that each component $S^{p-2j}(\hat\gc^{-1})$ of $\hat\gc^p$ decomposes into
\beq
\label{cenaboh}
S^{p-2j}(\hat\gc^{-1})=\bigoplus_{0\leq\ell\leq p-2j}S^{\ell,p-2j-\ell}
\eeq
and there is also a decomposition of the whole $\wh\gc$ into $\ad(\mathfrak{J})$-eigenspaces, with purely imaginary eigenvalues. We are now ready to describe the universal CR algebra $(\gc,\gu)$.
\begin{proposition}
\label{autoval}\label{bed}
Let $\displaystyle\mathfrak{u}=\bigoplus_{p\geq -1}\gu^{p}$ be the $\bZ$-graded subspace of $\wh\gc$ with component
\beq
\label{subsub}
\gu^p=\bigoplus_{1\leq \ell\leq p+2} S^{\ell,p+2-\ell}\oplus\bigoplus_{0\leq j\leq [p/2]} S^{p-2j}(\hat\gc^{-1})
\eeq
\vskip0.1cm\par\noindent
given by the direct sum of all $\ad(\gJ)$-eigenspaces in $\hat\gc^p$ of non-minimal eigenvalue \!$-i(p+2)$. Then $\gu$ is a Lie subalgebra of $\wh\gc$ and $(\gc,\gu)$ a holomorphically nondegenerate CR algebra. Moreover:
\begin{itemize}
\item[(i)] the real isotropy algebra  is nonnegatively $\bZ$-graded $\displaystyle\gc_{o}=\bigoplus_{p\geq 0}\gc_{o}^{p}$ with component
\begin{align*}
\gc_{o}^p=\Re(&\gu^p\cap\bar\gu^p)\;,\qquad\text{where}\\
&\gu^p\cap\bar\gu^p=\bigoplus_{1\leq \ell\leq p+1} S^{\ell,p+2-\ell}\oplus\bigoplus_{0\leq j\leq [p/2]} S^{p-2j}(\hat\gc^{-1})
\end{align*}
is the direct sum of all $\ad(\mathfrak J)$-eigenspaces in $\hat\gc^p$ of non-extremal eigenvalues $\pm i(p+2)$;
\item[(ii)] the components of the core $\displaystyle\mathfrak{M}=\bigoplus_{p\geq -2}\mathfrak{M}^p$ of $(\gc,\gu)$ are given by
\begin{equation*}
\mathfrak{M}^{p}=\begin{dcases}
\gc^{p}\quad\;\;\;\;\;\;\;\;\;\;\;\;\;\;\;\;\;\;\qquad\quad\text{for}\;\;p=-2,-1\,,
\\
\Re(\mathfrak{M}^{p(10)}\oplus\mathfrak{M}^{p(01)})\quad\text{for all}\;\;p\geq 0\,,
\end{dcases}
\end{equation*}
where $\mathfrak{M}^{p(10)}=S^{p+2,0}$ and 
$\mathfrak{M}^{p(01)}=\overline{\mathfrak{M}^{p(10)}}=S^{0,p+2}$
are the $\ad(\mathfrak J)$-eigenspaces in $\hat\gc^p$ of extremal eigenvalues $\pm i(p+2)$;
\item[(iii)] any abstract core $\displaystyle\gm$ of hypersurface type admits a natural
immersion $\varphi:\gm\longrightarrow\mathfrak{M}$.
\end{itemize} 
\end{proposition}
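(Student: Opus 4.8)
The plan is to run everything through the two commuting semisimple derivations $\ad(E)$ and $\ad(\gJ)$ of $\wh\gc$, where $E\in\gc^0$ is the grading element and $\gJ\in\gk^0$ the fixed complex structure. I would write $\wh\gc=\bigoplus_{(p,q)}\wh\gc^{(p,q)}$ for the simultaneous eigenspace decomposition, with $\ad(E)=p$ and $\ad(\gJ)=iq$ on $\wh\gc^{(p,q)}$; by the Definition preceding the Proposition, $\wh\gc^{(p,q)}$ is the span of the pieces $S^{\ell,m-\ell}$ in $\wh\gc^p$ with $2\ell-m=q$, and the minimal value of $q$ in $\wh\gc^p$ is $-(p+2)$, attained only on $\gk^p\simeq S^{p+2}(\wh\gc^{-1})$. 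Reading off \eqref{subsub}, $\gu^p$ is exactly the sum of the $\wh\gc^{(p,q)}$ with $q\geq -p$, i.e.\ all eigenspaces of non-minimal eigenvalue. Introducing $W:=E-i\gJ\in\wh\gc^0$, which is semisimple with integer eigenvalue $p+q$ on $\wh\gc^{(p,q)}$, I would check directly that
$$
\gu=\bigoplus_{\lambda\geq 0}\ker\bigl(\ad(W)-\lambda\bigr):
$$
indeed $\ad(W)\geq 0$ forces $p\geq -1$ (the only negative degrees are $p=-1,-2$, where $p+q\leq 0$ with equality only on $\gc^{-1(10)}$), and in each degree it selects precisely $q\geq -p$. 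Since the nonnegative part of a $\bZ$-grading induced by a derivation is a subalgebra, $\gu$ is a Lie subalgebra of $\wh\gc$, which is the first assertion.

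Next I would note that complex conjugation preserves each $\wh\gc^p$ and sends $\wh\gc^{(p,q)}$ to $\wh\gc^{(p,-q)}$ (it swaps $\gc^{-1(10)}$ and $\gc^{-1(01)}$, hence $S^{\ell,m-\ell}$ and $S^{m-\ell,\ell}$), so $\bar\gu^p$ is the sum of eigenspaces with $q\leq p$. Two bookkeeping computations then follow at once: $\gu^p+\bar\gu^p=\wh\gc^p$ for $p\geq 0$, while $\gu+\bar\gu=\bigoplus_{p\geq -1}\wh\gc^p$ misses only $\wh\gc^{-2}$; and $\gu^p\cap\bar\gu^p$ is the sum of the $\wh\gc^{(p,q)}$ with $-p\leq q\leq p$, i.e.\ the non-extremal eigenspaces, which is empty for $p=-1$. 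Taking real points yields the nonnegatively graded isotropy $\gc_o=\Re(\gu\cap\bar\gu)=\bigoplus_{p\geq 0}\gc_o^p$ of assertion (i), with the stated description of $\gu^p\cap\bar\gu^p$.

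To obtain (ii) and holomorphic nondegeneracy I would compute the Freeman filtration $\gu_{-1}=\gu\supset\gu_0\supset\cdots$ of \eqref{bologna} by induction on $p$, proving
$$
\gu_p=\bigoplus_{0\leq p'\leq p-1}\bigl(\gu^{p'}\cap\bar\gu^{p'}\bigr)\oplus\bigoplus_{p'\geq p}\gu^{p'},\qquad p\geq 0.
$$
The inductive step rests on two facts. First, $\bar\gu$ is a subalgebra, so every homogeneous $Z\in\gu^{p'}\cap\bar\gu^{p'}\subset\bar\gu$ automatically satisfies $[Z,\bar\gu]\subset\bar\gu\subset\gu_p+\bar\gu$ and survives. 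Second, brackets add contact degrees and $\bar\gu$ lives in degrees $\geq -1$, so the only constraint on $Z\in\gu^p$ comes from $[Z,\gc^{-1(01)}]$ landing in degree $p-1$; since $\gc^{-1(10)}$ is $B$-isotropic, the extremal piece $S^{p+2,0}=\mathfrak{M}^{p(10)}$ of $\gu^p$ brackets with $\gc^{-1(01)}$ onto $S^{p+1,0}$ (by the symmetric-power formula of \S\ref{univ}, using that $B$ pairs $\gc^{-1(10)}$ and $\gc^{-1(01)}$ perfectly), which has $q=p+1>p-1$ and escapes $\gu_p+\bar\gu$, whereas the remaining $q\leq p$ part does not. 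Hence exactly the extremal summand is removed at each step, giving $\gu_p/\gu_{p+1}\simeq\gu^p/(\gu^p\cap\bar\gu^p)=S^{p+2,0}=\mathfrak{M}^{p(10)}$, which is (ii), and $\bigcap_{p\geq -1}\gu_p=\bigoplus_{p'\geq 0}(\gu^{p'}\cap\bar\gu^{p'})=\gu\cap\bar\gu$, i.e.\ holomorphic nondegeneracy.

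For (iii), given an abstract core $\gm$ of hypersurface type I would first match $\gm_-$ with $\mathfrak{M}_-=\gc_-$: both are Heisenberg algebras of degree $n$ with compatible complex structures, so after choosing $\gJ$ of the same signature as $J$ there is a graded Lie algebra isomorphism $\varphi_-\colon\gm_-\to\gc_-$ intertwining $J$ and $\gJ$, equivalently a $B$-symplectic $\bC$-linear identification $\gm^{-1}\cong\gc^{-1}$. Iterating the maps $L^{p+2}$ of Definition \ref{acore}(iii) realizes $\gm^{p(10)}$ as a subspace of $\wh\gm^{-2}\otimes S^{p+2}(\gm^{-1(01)})^*$, while the analogous iteration for $\mathfrak{M}$ — whose single step is, by the previous paragraph and the symmetric-power formula, precisely the polarization map $S^{p+2}(\gc^{-1(10)})\to S^{p+1}(\gc^{-1(10)})\otimes(\gc^{-1(01)})^*$ — identifies $\mathfrak{M}^{p(10)}=S^{p+2}(\gc^{-1(10)})$ with $\wh\gc^{-2}\otimes S^{p+2}(\gc^{-1(01)})^*$ (an isomorphism, as the spaces have equal dimension and the iterated derivative is injective). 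Transporting through $\varphi_-$ defines an injective $\varphi|_{\gm^{p(10)}}$, extended to $\gm^{p(01)}$ by conjugation and to $\gm^p$ by reality. The main work, and the step I expect to be the genuine obstacle, is to verify property (iv) of a morphism of cores, namely that $\varphi$ intertwines $L^{p+2}$ with $L'^{p+2}$ \emph{at each single level}, not merely after full iteration; I would prove this by induction on $p$, the point being that the target condition in Definition \ref{acore}(iii) — that $L^{p+2}$ land in $\wh\gm^{-2}\otimes S^{p+2}(\gm^{-1(01)})^*$ — is exactly the symmetry forcing the iterated maps to factor correctly through their single steps, matching the derivative structure of $\mathfrak{M}$. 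Uniqueness of $\varphi$ is then automatic from the remark following Definition \ref{acore}.
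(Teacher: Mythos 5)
Your proposal is correct and follows essentially the same route as the paper: the same $\ad(\gJ)$-eigenvalue bookkeeping yields the subalgebra property and the description of $\gc_o$, your inductive formula for $\gu_p$ is exactly the paper's \eqref{dreamtheater} (with the same key observation that only the bracket with $\gc^{-1(01)}$ constrains the boundary degree, removing precisely the extremal summand $S^{p+2,0}$ at each step), and the immersion in (iii) is built by the same induction on degree from an identification $\varphi_-\colon\gm_-\to\gc_-$ intertwining $J$ and $\gJ$. The only local differences are presentational: you justify closure of $\gu$ under brackets via the nonnegative part of the grading by $\ad(W)$ with $W=E-i\gJ$, where the paper simply invokes $\gk^0$-equivariance and the decomposition \eqref{cenaboh}; and in (iii) you define $\varphi$ through the fully iterated polarization maps (with a dimension count identifying $\mathfrak{M}^{p(10)}\simeq\wh\gc^{-2}\otimes S^{p+2}(\gc^{-1(01)})^*$) and must then verify the single-step compatibility (iv) by a further induction, whereas the paper sets $\varphi|_{\gm^{p(10)}}=(\mathfrak{L}^{p+2})^{-1}\circ\varphi|_{\gm_{<p}}\circ L^{p+2}$ level by level, using that the single-step $\mathfrak{L}^{p+2}$ is an isomorphism onto the universal target, so that (iv) holds by construction — this inverts your order of argument and dissolves the step you flag as the genuine obstacle, though your inductive resolution of it is sound.
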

\begin{proof}
By definition \eqref{subsub}, decomposition \eqref{cenaboh} and $\gk^0$-equivariance, it follows $[\gu^p,\gu^q]\subset\gu^{p+q}$ and $(\gc,\gu)$ is a CR algebra. 
The claim on $\gc_o$ is also clear as $\gc_o=\Re(\gu\cap\overline\gu)$ by definition, conjugation in $\wh\gc$ with respect to the real form $\gc$ is compatible with the grading and
$\gu^{-1}\cap\bar\gu^{-1}=0$. Now a direct induction shows that the $p$-th term \eqref{bologna} of the Freeman sequence of $(\gc,\gu)$  is given by
\beq
\label{dreamtheater}
\gu_p=\bigoplus_{0\leq j\leq p-1}(\gu^j\cap\overline\gu^j)\oplus\;\;\bigoplus_{j\geq p }\gu^{j}
\eeq
so that $\displaystyle\bigcap_{p\geq -1}\gu_p=\gu\cap\overline\gu$ and $(\gc,\gu)$ is holomorphically nondegenerate. 

Claim (ii) follows from \eqref{dreamtheater}, the decomposition
$
\wh\gc=(\gu\cap\overline\gu)\bigoplus\wh{\mathfrak{M}}
$, the fact that $\mathfrak{M}^{p(10)}$ can be iteratively characterized by
$[\mathfrak{M}^{p(10)},\gc^{-1(01)}]\subset \mathfrak{M}^{p-1(10)}$ 
for any $p\geq 0$  and, finally, the identifications \eqref{corecore}-\eqref{corecore2}.
We also note that in this case the immersion
\begin{equation*}
\begin{split}
\mathfrak{L}^{p+2}:\;&\mathfrak{M}^{p(10)}\longrightarrow\mathfrak{M}^{p-1(10)}\otimes (\mathfrak{M}^{-1(01)})^*\cap\wh{\mathfrak{M}}^{-2}\otimes S^{p+2}(\mathfrak{M}^{-1(01)})^* 
\end{split}
\end{equation*}
of (iii) of Definition \ref{acore} is  surjective and hence an isomorphism, for all $p\geq 0$. To prove (iii) fix an abstract core $\displaystyle\gm=\bigoplus\gm^p$
and an identification $\varphi_-:\gm_-\longrightarrow\gc_-$ of $\bZ$-graded Lie algebras  with $\varphi_-|_{\gm^{-1}}\circ J=\mathfrak{J}\circ\varphi_-|_{\gm^{-1}}$. By 
the observation after Definition \ref{acore} and an induction argument
we get a unique morphism of cores $\varphi:\gm\longrightarrow \mathfrak{M}$ that is injective and
satisfies
\begin{multline*}
\varphi|_{\gm_-}:=\varphi_-\;,\qquad\varphi|_{\gm^{p(10)}}:=(\mathfrak{L}^{p+2})^{-1}\circ \varphi|_{\gm_{<p}}\circ L^{p+2}\qquad\text{and}\\
\varphi|_{\gm^{p-1(10)}}\circ J=(\frac{1}{p+1}\mathfrak{J})\circ\varphi|_{\gm^{p-1(10)}}\;,
\end{multline*}
where $\displaystyle\gm_{<p}=\bigoplus_{r<p}\gm^{r}$ for all $p\geq 0$. This concludes the proof.
%
\end{proof}
We have just seen that any abstract core $\displaystyle\gm=\bigoplus\gm^p$
is, up to isomorphism, always realized as a subspace of $\mathfrak M$ with components
\beq
\label{brachetto}
\gm^{p}=
\begin{dcases}
0\;\;\,\,\;\;\;\;\;\;\;\;\;\;\;\;\,\,\,\,\qquad\qquad\qquad\qquad\qquad\text{for all}\;\;p<-2\,,\\
\gc^{p}\,\;\;\;\;\;\;\;\;\;\;\;\;\;\;\;\;\qquad\qquad\qquad\qquad\qquad\text{for}\;\;p=-2,-1\,,\\
\Re(\gm^{p(10)}\oplus\overline{\gm^{p(10)}})\,\;\;\;\;\;\;\;\;\;\;\;\;\;\;\;\;\;\,\qquad\text{for all}\;\;p\geq 0\;,
\end{dcases}
\eeq
where $\gm^{p(10)}\subset\mathfrak{M}^{p(10)}$ satisfies $[\gm^{p(10)},\gc^{-1(01)}]\subset \gm^{p-1(10)}$
for all $p\geq 1$. In particular
\beq
\label{dimensione}
\dim_{\bC}\gm^{p(10)}\leq 
\binom{n+p+1}{p+2}
\eeq
and two cores $\gm$, $\gm'$ of the same signature $\operatorname{sgn}(\mathfrak J)=(r,s)$ are isomorphic if and only if $\gm'=g\cdot \gm$ for some element $g\in \Aut(\gc,\mathfrak{J})$  
of the group 
\begin{align*}
\Aut(\gc,\mathfrak{J})&=\left\{
g:\gc\longrightarrow\gc
\;\;\text{automorphism of}\;\;\bZ\text{-graded Lie algebras}
\!\!\!\!\!\!\!\!\!\!\!\!\phantom{C^{C^C}}\right.
\\
&\phantom{cccccccccccccccc}
\left.\text{such that}\;\;
g\circ\ad(\mathfrak J)=\ad(\mathfrak J)\circ g
\!\!\!\!\!\!\!\!\!\!\!\!\phantom{C^{C^C}}\right\}\simeq\mathrm{CU}(r,s)\;.
\end{align*}
We have used here that the Lie algebra $\mathfrak{cu}(r,s)=\bR\oplus\mathfrak{u}(r,s)$ of $\Aut(\gc,\mathfrak{J})$ is naturally 
identifiable with the subalgebra $\bR E\oplus\Re(S^{1,1})$ of $\gc^0$ and
that any $0$-degree automorphism of a fundamental Lie algebra can be canonically prolonged to a unique automorphism of the maximal
transitive prolongation.
\begin{definition}\label{Knondeg}
A CR manifold $(M,\cD,\cJ)$ is called {\it strongly regular of type $\gm$} if the associated cores $\gm_x\simeq \gm$ at all points $x\in M$. 
\end{definition}
The main aim of \S\ref{cisiamo} 
is to constructing (strongly regular) homogeneous CR manifolds of a given type $\gm$. 
\vskip0.3cm
\par
\section{Homogeneous models for \texorpdfstring{$k$}{k}-nondegenerate CR manifolds }
\label{cisiamo}
\setcounter{section}{4}
\setcounter{equation}{0}
\setcounter{table}{0}
\medskip\par
\subsection{Main definitions, results and first examples}
\label{sec:4.1}
We recall that any abstract core $\gm=\bigoplus_{p\in\bZ}\gm^{p}$ is 
identified with a finite dimensional subspace \eqref{brachetto} of the universal CR algebra $(\gc,\gu)$. 
\begin{definition}
\label{ukip}
A {\it model} of type $\gm$
is the datum of a $\bZ$-graded Lie subalgebra 
$$
\gg=\bigoplus_{p\in\bZ}\gg^{p}
$$ 
of $\gc$ satisfying
\begin{itemize}
\item[(i)] $\gg^{p}=\gc^{p}$ for all $p<0$;
\item[(ii)] the grading element $E$ belongs to $\gg^{0}$;
\item[(iii)] $\wh\gg^p=(\wh\gg^p\cap\gu^p)+(\wh\gg^p\cap\bar\gu^p)$ for all $p\geq 0$;
\item[(iv)] the natural  $\Aut(\gc,\mathfrak{J})$-equivariant projection 
$\pi_{\mathfrak{M}^{p(10)}}:\wh\gc^p\longrightarrow \mathfrak{M}^{p(10)}
$ of $\wh\gc^p$ onto the $\ad(\mathfrak{J})$-eigenspace
of maximum eigenvalue 
satisfies  $$\pi_{\mathfrak{M}^{p(10)}}(\wh\gg^p\cap\gu^p)=\mathfrak{m}^{p(10)}$$
for all $p\geq 0$. 
\end {itemize}
Two models $\gg$ and $\gg'$ of  type  $\gm$ and respectively $\gm'$  are called {\it isomorphic} if $\gg'=g\cdot\gg$ for some $g\in \Aut(\gc,\mathfrak{J})$; if this is the case then $\gm'=g\cdot\gm$ too. A model not contained in any other model is called {\it maximal}. 
\end{definition}
We emphasize that we do not require $\mathfrak J\in\gg^0$ in general. We say that $\gg$ has the {\it property $(\mathfrak J)$} when $\mathfrak J\in\gg^0$ and note that in this special case $\wh\gg$ decomposes into $\ad(\mathfrak J)$-eigenspaces, with the abstract core $\gm$ actually contained in $\gg$.
This property is a direct generalization of the  ``property (J)'' introduced and studied in \cite{MeNa1} for Levi-Tanaka algebras, see also \cite{LN}. 
We anticipate that we will encounter in Theorem \ref{thmstrange} a model that does not satisfy property $(\mathfrak J)$: this fact says that the core $\gm$ \emph{cannot} always be realized simply as a subspace of $\gg$ and it is also the main reason that motivated weaker property (iv) in Definition \ref{ukip}.

The following is the first main result on homogeneous CR manifolds. We remark that  a CR subalgebra $(\gg,\gq)$ of a holomorphically nondegenerate CR algebra $(\gc,\gu)$ is not necessarily holomorphically nondegenerate (see \cite{MeNa3}), in our setting this stems from
(iii) and (iv), Definition \ref{ukip}. 
\begin{theorem}
\label{primoteorema}
Let $\displaystyle\gg=\bigoplus\gg^{p}$ be a model of type $\gm$. Then:
\begin{itemize}
\item[(i)] $\gq=\wh\gg\cap \gu$ is a complex Lie subalgebra of $\wh\gg$
with the compatible grading $\gq=\bigoplus_{p\geq -1}\gq^p$ with components $\gq^p=\wh\gg^{p}\cap\gu^{p}$;
\item[(ii)] the pair $(\gg,\gq)$ is a CR algebra with real isotropy algebra $\gg_o=\gg\cap\gu$
that is  $\bZ$-graded in nonnegative degrees;
\item[(iii)]  
$\gg$ is finite dimensional and the associated (germ of) locally homogeneous CR manifold $M=G/G_o$ with algebra of infinitesimal CR automorphisms $(\gg,\gq)$
is of hypersurface type and strongly regular of type $\gm$
(in particular it is $k$-nondegenerate, $k=\operatorname{ht}(\gm)+2$). 
\end{itemize}
\end{theorem}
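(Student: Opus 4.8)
The plan is to read everything off the structure of the universal CR algebra $(\gc,\gu)$ from Proposition \ref{bed} together with the three defining conditions of a model in Definition \ref{ukip}. Part (i) is essentially formal: since $\gg$ is a $\bZ$-graded subalgebra of $\gc$, its complexification $\wh\gg=\bigoplus_p\wh\gg^p$ is a graded complex subalgebra of $\wh\gc$, and $\gu$ is a graded subalgebra of $\wh\gc$ by Proposition \ref{bed}; hence $\gq=\wh\gg\cap\gu$ is a complex subalgebra, and because both factors are graded and $\wh\gc=\bigoplus\wh\gc^p$ is a direct sum we get $\gq=\bigoplus_{p\geq -1}(\wh\gg^p\cap\gu^p)$, which is (i). For (ii) I first note that for a real $Z\in\gg$ one has $Z\in\gu\iff \overline{Z}=Z\in\overline{\gu}\iff Z\in\gu\cap\overline{\gu}$, so $\gg\cap\gq=\gg\cap\gu=\Re(\gq\cap\overline{\gq})=\gg_o$. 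Since $\gu^p=0$ for $p<-1$ and $\gu^{-1}=\gc^{-1(10)}$ meets the real space $\gc^{-1}$ only in $0$, the components $\gg_o^p=\gg^p\cap\gu^p\cap\overline{\gu}^p$ vanish for $p<0$, so $\gg_o=\bigoplus_{p\geq 0}\gg_o^p$ is nonnegatively graded; finite codimension will follow from the core computation below.

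The heart of the proof is the Freeman sequence $\{\gq_p\}$ of $(\gg,\gq)$ defined in \eqref{bologna}. The key claim is
\[
\gq_p=\wh\gg\cap\gu_p\qquad\text{for all }p\geq -1,
\]
with $\gu_p$ as in \eqref{dreamtheater}; equivalently $\gq_p^j=\wh\gg^j\cap\gu^j$ for $j\geq p$ and $\gq_p^j=\wh\gg^j\cap\gu^j\cap\overline{\gu}^j$ for $j<p$. I would prove this by induction on $p$. The inclusion $\gq_p\supseteq\wh\gg\cap\gu_p$ is the stabilization direction: for $Z\in\wh\gg\cap\gu_p$ the universal computation \eqref{dreamtheater} gives $[Z,\overline{\gu}]\subseteq\gu_{p-1}+\overline{\gu}$, and condition (iii) of Definition \ref{ukip} lets one intersect with $\wh\gg$ to obtain $[Z,\overline{\gq}]\subseteq\gq_{p-1}+\overline{\gq}$, so $Z\in\gq_p$. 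The substantive inclusion $\gq_p\subseteq\wh\gg\cap\gu_p$ is where the Levi form enters. Writing $\pi=\pi_{\mathfrak{M}^{p-1(10)}}$, I must show that a degree $(p-1)$ component $Z_{p-1}\in\wh\gg^{p-1}\cap\gu^{p-1}$ of an element of $\gq_p$ must lie in $\overline{\gu}^{p-1}$. If $\pi(Z_{p-1})\neq 0$, a weight count against $\ad(\gJ)$ shows that, among the components of $Z$, only $\pi(Z_{p-1})$ (of extremal eigenvalue $i\,((p-1)+2)$) can produce an extremal $\mathfrak{M}^{p-2(10)}$-contribution in $[Z,\overline{v}]$ for $\overline{v}\in\gc^{-1(01)}\subseteq\overline{\gq}^{-1}$, and that contribution equals the higher Levi form $\mathfrak{L}^{p+1}(\pi(Z_{p-1}))(\overline{v})$. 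Since $\mathfrak{L}^{p+1}$ is injective by Proposition \ref{bed}, some $\overline{v}$ gives $\pi_{\mathfrak{M}^{p-2(10)}}[Z,\overline{v}]\neq 0$; as $\gq_{p-1}+\overline{\gq}$ has trivial $\mathfrak{M}^{p-2(10)}$-component, this contradicts $[Z,\overline{\gq}]\subseteq\gq_{p-1}+\overline{\gq}$, forcing $\pi(Z_{p-1})=0$.

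Granting the claim, the identifications \eqref{corecore}--\eqref{corecore2} give
\[
\gm^{p(10)}_{(\gg,\gq)}=\gq_p/\gq_{p+1}\simeq (\wh\gg^p\cap\gu^p)/(\wh\gg^p\cap\gu^p\cap\overline{\gu}^p)\simeq \pi_{\mathfrak{M}^{p(10)}}(\wh\gg^p\cap\gu^p)=\gm^{p(10)}
\]
by condition (iv), while $\gg^p=\gc^p$ for $p<0$ yields $\gm^{-2}_{(\gg,\gq)}=\gc^{-2}$ and $\gm^{-1}_{(\gg,\gq)}=\gc^{-1}$; comparing the Lie brackets and the maps $L^{p+2}$ as in \eqref{brachetto} identifies the core of $(\gg,\gq)$ with the given $\gm$. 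In particular $\dim\gm^{-2}=1$ gives hypersurface type, $\gg/\gg_o\simeq\gm$ is finite-dimensional (completing (ii)), and since $\gm^{p(10)}=0$ exactly for $p>k-2$ the sequence stabilizes with $\gq_{k-1}=\gq\cap\overline{\gq}\neq\gq_{k-2}$, i.e. $(\gg,\gq)$ is holomorphically nondegenerate of order $k=\operatorname{ht}(\gm)+2$ and $M=G/G_o$ is strongly regular of type $\gm$.

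It remains to show $\gg$ is finite-dimensional, which I expect to be the main obstacle. From (iv) and $\gm^{p(10)}=0$ for $p>k-2$ one gets $\pi_{\mathfrak{M}^{p(10)}}(\wh\gg^p)=0$ and, via (iii) and conjugation, $\wh\gg^p\subseteq\gu^p\cap\overline{\gu}^p$, that is $\gg^p\subseteq\gc_o^p$ for all $p>k-2$: the model carries no extremal weight in high degree. This alone does not bound $\dim\gg^p$, so finiteness must come from the transitive prolongation structure. First, $\gg$ is transitive because $\gg^{-1}=\gc^{-1}$ and $\gc$ is a maximal transitive prolongation. Next, since $E\in\gg^0$ by Definition \ref{ukip}(ii), every ideal of $\gg$ is $\ad(E)$-invariant, hence graded; if the largest ideal $\mathfrak{i}\subseteq\gg_o$ were nonzero, its lowest nonzero component $\mathfrak{i}^{p_0}$ ($p_0\geq 0$) would satisfy $[\mathfrak{i}^{p_0},\gg^{-1}]\subseteq\mathfrak{i}^{p_0-1}$, which is $0$ (either by minimality when $p_0\geq 1$, or because $\gg_o^{-1}=0$ when $p_0=0$), so transitivity gives $\mathfrak{i}^{p_0}=0$, a contradiction. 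Hence $\gg$ acts effectively on $M$. Being an effective, transitive, $k$-nondegenerate CR algebra, $\gg$ embeds into the (finite-dimensional) algebra of infinitesimal CR automorphisms of the finitely nondegenerate germ $M=G/G_o$, so $\gg$ is finite-dimensional; the genuinely delicate alternative is to bound $\bigoplus_{p\geq 0}\gg^p$ directly by proving that the prolongation of the symbol $(\gc_-,\gg^0)$ terminates once the extremal weights $\mathfrak{M}^{p(10)},\mathfrak{M}^{p(01)}$ are absent. This proves (iii).
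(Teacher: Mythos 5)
Your proposal is correct and takes essentially the same route as the paper: your key claim $\gq_p=\wh\gg\cap\gu_p$ is precisely the paper's formula \eqref{chebarba}, established by the same induction (condition (iii) of Definition \ref{ukip} for the stabilization direction, condition (iv) together with the injectivity of the universal Levi form for the extremal-weight argument), and the core identification, hypersurface type and $k$-nondegeneracy are read off identically. For finite-dimensionality the paper makes the same appeal to the finiteness theorem for finitely nondegenerate CR structures (\cite[Prop.~3.1]{BRZ} via \cite[Remark, pag.~904]{Fel}); your supplementary transitivity/effectiveness argument is sound and harmless, though note the finiteness result should be invoked at the level of the CR algebra $(\gg,\gq)$, as in Fels's remark, rather than through the germ $M=G/G_o$, whose construction already presupposes $\dim\gg<+\infty$.
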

\begin{proof}
Point (i) follows immediately as $\gq$ is the intersection of two $\bZ$-graded Lie subalgebras of $\wh\gc$ and therefore a $\bZ$-graded Lie subalgebra of $\wh\gc$.
The real isotropy algebra of the pair $(\gg,\gq)$
is $\bZ$-graded in nonnegative degrees,
$$
\gg_o=\gg\cap\gq=\gg\cap\gu=\bigoplus_{p\geq 0}\gg^p\cap\gu^p\,,
$$
and $\dim(\gg/\gg_o)<+\infty$ by (iii)-(iv) of Definition \ref{ukip} and since $\gm$ is finite dimensional by definition. This shows that $(\gg,\gq)$ is a CR algebra.

To prove (iii) we first show that the $q$-th term, $q\geq 0$, of the Freeman sequence of $(\gg,\gq)$ is given by
\beq
\label{chebarba}
\gq_q=\bigoplus_{p\geq q}\gq^p\oplus\bigoplus_{0\leq p\leq q-1}\gq^p\cap\overline\gq^p\;.
\eeq
First note that each term $\gq_q$ is $\bZ$-graded, by its very definition  \eqref{bologna} and the fact that 
$\bar\gq$ is $\bZ$-graded. 
Conditions (i) and (iii) of Definition \ref{ukip} imply then
\begin{align*}
\gq_{0}&=\left\{X\in\gq\,|\phantom{C^{C^C}}\!\!\!\!\!\!\!\!\!\!\![X,\bar\gq]\subset\gq+\bar\gq \right\}\\
&=\left\{X\in\gq\,|\phantom{C^{C^C}}\!\!\!\!\!\!\!\!\!\!\![X,\bar\gq]\subset\bigoplus_{p\geq -1}\wh\gg^{p} \right\}\\
&\subset \left\{X\in\gq\,|\phantom{C^{C^C}}\!\!\!\!\!\!\!\!\![X,\gc^{-1(01)}]\subset \bigoplus_{p\geq -1}\wh\gg^{p} \right\}\;,
\end{align*}
and hence $\displaystyle\gq_0\subset\gq\cap\bigoplus_{p\geq 0}\wh\gg^{p}=\bigoplus_{p\geq 0}\gq^p$ by the nondegeneracy of $\gc_-=\gc^{-2}\oplus\gc^{-1}$; the opposite inclusion is immediate, hence $\displaystyle\gq_0=\bigoplus_{p\geq 0}\gq^p$. 

Assume now that \eqref{chebarba} holds for any 
$q$ strictly smaller than a positive integer $r+1$; we want to show \eqref{chebarba} for $q=r+1$ too. 
The induction hypothesis and condition (iii) imply
\begin{align*}
\gq_{r+1}&=\left\{X\in \gq_r\,|\phantom{C^{C^C}}\!\!\!\!\!\!\!\!\!\!\![X,\bar\gq]\subset   \gq_{r}+\bar\gq\right\}\\
&=\left\{X\in \bigoplus_{p\geq r}\gq^p\oplus\bigoplus_{0\leq p\leq r-1}\gq^p\cap\overline\gq^p\,|\phantom{C^{C^C}}\!\!\!\!\!\!\!\!\!\!\![X,\bar\gq]\subset   \bigoplus_{p\geq r}\gq^p+\bar\gq\right\}\\
&=\left\{X\in \bigoplus_{p\geq r}\gq^p\oplus\bigoplus_{0\leq p\leq r-1}\gq^p\cap\overline\gq^p\,|\phantom{C^{C^C}}\!\!\!\!\!\!\!\!\!\!\![X,\bar\gq]\subset   \bigoplus_{p\geq r}\wh\gg^p+\bar\gq\right\}\
\end{align*}
and
$\displaystyle\bigoplus_{p\geq r+1}\gq^p\oplus\bigoplus_{0\leq p\leq r}\gq^p\cap\overline\gq^p   \subset\gq_{r+1}$. We now prove the opposite inclusion.

By condition (iv) the natural projection $\pi_{\mathfrak{M}^{r(10)}}:\wh\gc^r\longrightarrow \mathfrak{M}^{r(10)}$ 
yields  an identification 
$
\gq^r/ \gq^r\cap\overline\gq^r\simeq \mathfrak{m}^{r(10)}
$  and any $X\in\gq^r$ has a unique decomposition $X=X^{10}+X_o$ where $X^{10}=\pi_{\mathfrak{M}^{r(10)}}(X)\in\gm^{r(10)}$ and $X_o\in \gu^r\cap\bar\gu^r$. By Proposition \ref{bed} and the definition of core we have for all $v\in\gc^{-1(01)}$
\begin{equation*} 
\begin{split}
&[X,v]=[X^{10},v]+[X_o,v]\qquad\text{where}\\
&[X^{10},v]\in\gm^{r-1(10)}\qquad\text{and}\qquad [X_o,v]\in\overline\gu^{r-1}
\end{split}
\end{equation*}
and $X^{10}=0$ if and only if $[X^{10},\gc^{-1(01)}]=0$. This implies $$\gq_{r+1}\subset\displaystyle\bigoplus_{p\geq r+1}\gq^p\oplus\bigoplus_{0\leq p\leq r}\gq^p\cap\overline\gq^p$$ and hence \eqref{chebarba} for all $q\geq 0$. 

The claims on the core and $k$-nondegeneracy follow from \eqref{brachetto} and the identifications
$$\gm_x^{q(10)}\simeq\frac{\;\;\cF_q^{10}|_x}{\cF_{q+1}^{10}|_x}\simeq\gq_{q}/\gq_{q+1}\simeq \gq^q/\gq^q\cap\overline\gq^q\simeq\gm^{q(10)}\;,$$
for all $q\geq 0$. In view of $k$-nondegeneracy and \cite[Prop. 3.1]{BRZ} (see also \cite[Remark, pag. 904]{Fel} for more details), it also follows that $\gg$ is finite-dimensional.

Finally, the (germ of) locally homogeneous CR manifold $M=G/G_o$ associated with the CR algebra $(\gg,\gq)$ is of hypersurface type since
\begin{align*}
T^\bC_xM&\simeq \wh\gg/\gq\cap\overline\gq\;,\\
\cD^\bC|_x&\simeq \gq+\overline\gq/\gq\cap\overline\gq\;,
\end{align*}  
by \eqref{itangente} and $\wh\gg^p=\gq^p+\overline\gq^p$ for all $p\geq -1$; it is clearly strongly regular, by transitivity of the action of the Lie algebra $\gg$ of infinitesimal CR automorphisms. 
 This proves (iii).
\end{proof}
\begin{remark}
It would be interesting to look for a purely algebraic proof of the fact
that any model is finite-dimensional. In this paper we show the somehow weaker fact that most of the finite dimensional models $\gg$ that we determine are maximal
(see Theorem \ref{thmsimple}, 
Theorem \ref{thmstrange}). 
\end{remark}
\begin{example}[{\it Levi-nondegenerate CR manifolds}]\label{example00}\hfill\medskip\par
A connected CR manifold $(M,\cD,\cJ)$ of dimension $2n+1$ with first Levi form $\cL^{1}:\cD^{10}\times \cD^{01}\longrightarrow T^\bC M/ \cD^\bC$  nondegenerate at all points is strongly regular of type $\gm$ with $\gm=\gm^{-2}\oplus\gm^{-1}$ given by the Heisenberg algebra of some signature $\sgn(\mathfrak{J})=(r,s)$, $n=r+s$. By the results of \cite{Ta2, Ta3, MeNa1} (see also \cite[\S 3]{Tu}), for any signature there exists a unique maximal model, it is a grading
$\displaystyle\gg=\bigoplus\gg^p$ of the simple Lie algebra $\gg=\su(r+1,s+1)$ with
$$
\gg^p=\begin{cases}
0\;\;\;\;\;\;\;\;\;\;\;\;\;\;\;\qquad\qquad\qquad\qquad\qquad&\text{for all}\;\;|p|>2\;,\\
\bR\,\;\;\;\;\;\;\;\;\;\;\;\;\;\;\;\;\qquad\qquad\qquad\qquad\qquad&\text{for}\;\;p=-2\;,\\
\bC^{n}\,\,\;\;\;\;\;\;\;\;\;\;\;\;\;\;\;\;\;\qquad\qquad\qquad\qquad\quad&\text{for}\;\;p=-1\;,\\
\mathfrak{u}(r,s)\oplus\bR E\;\;\;\;\;\;\;\;\;\;\;\;\;\;\qquad\qquad\qquad&\text{for}\;\;p=0\;,\\
(\bC^{n})^*\,\;\;\;\;\;\;\;\;\;\;\;\;\;\;\;\;\,\qquad\qquad\qquad\qquad&\text{for}\;\;p=1\;,\\
\bR^*\,\;\;\;\;\;\;\;\;\;\;\;\;\;\;\qquad\qquad\qquad\qquad\qquad&\text{for}\;\;p=2\;.\\
\end{cases}
$$
In other words $\wh\gg^0=S^{1,1}\oplus\bC E$, $\wh\gg^1=S^{1,0}\oplus S^{0,1}$ and $\wh\gg^2=S^0$ (this can also be   checked directly using
Proposition \ref{noioso}).
\end{example}
\begin{example}[{\it CR manifolds of dimension $5$}]\label{example01}\hfill\medskip\par
A $5$-dimensional connected CR manifold $(M,\cD,\cJ)$ with $k$-nondegenerate Levi form, $k\geq 2$, is actually $2$-nondegenerate at all points and strongly regular of type $\gm$ where $\sgn(\mathfrak{J})=(1,0)$ and
\begin{multline*}
\gm=\gm^{-2}\oplus\gm^{-1}\oplus\gm^0\qquad\text{with}\\
\gm^{-2}=\bR\;,\quad\gm^{-1}=\bC\;,\quad\gm^0=\mathfrak{M}^0=\Re(S^{2,0}\oplus S^{0,2})\;,
\end{multline*}
by a simple dimension argument applied to \eqref{dimensione} with $n=1$. By the results of \cite{FK1, FK2} there is a unique maximal model of type $\gm$, a grading
$\displaystyle\gg=\bigoplus\gg^p$ of the simple Lie algebra $\gg=\so(3,2)$ with 
$$
\gg^p=\begin{cases}
0\;\;\;\;\;\;\;\;\;\;\;\;\;\;\;\qquad\qquad\qquad\qquad\qquad&\text{for all}\;\;|p|>2\;,\\
\bR\,\;\;\;\;\;\;\;\;\;\;\;\;\;\;\;\;\qquad\qquad\qquad\qquad\qquad&\text{for}\;\;p=-2\;,\\
\bC\,\;\;\;\;\;\;\;\;\;\;\;\;\;\;\;\;\quad\qquad\qquad\qquad\qquad\quad&\text{for}\;\;p=-1\;,\\
\mathfrak{sp}_{2}(\bR)\oplus\bR E\,\;\;\;\;\;\;\;\;\;\;\;\;\;\qquad\qquad\qquad&\text{for}\;\;p=0\;,\\
\bC^*\,\,\,\;\;\;\;\;\;\;\;\;\;\;\;\;\;\;\;\;\quad\qquad\qquad\qquad\qquad&\text{for}\;\;p=1\;,\\
\bR^*\,\;\,\;\;\;\;\;\;\;\;\;\;\;\;\;\qquad\qquad\qquad\qquad\qquad&\text{for}\;\;p=2\;,\\
\end{cases}
$$
(see also \cite{SV} and the description in \cite[\S 3.2]{MS}). In other words 
\begin{multline*}
\wh\gg^0=\wh\gc^0=\wh{\mathfrak{M}}^0\oplus S^{1,1}\oplus \bC E\qquad\text{and}\\
\wh\gg^1=S^{1,0}\oplus S^{0,1}\;,\qquad\wh\gg^2=S^0\;.
\end{multline*}
We remark that $\gg^0$ not only contains the $0$-degree part $\mathfrak{u}(1)\oplus\bR E$ of the real isotropy algebra but also the real part of the direct sum $\wh{\mathfrak{M}}^0=S^{2,0}\bigoplus S^{0,2}$ of the two $\ad(\mathfrak J)$-eigenspaces in $\hat\gc^0$ 
of extremal eigenvalues $\pm 2i$.
\end{example}
The main aim of \S \ref{exexamples}  and \S \ref{exexamplesII} is to study $7$-dimensional models and their associated homogeneous CR manifolds. In view of Definition
\ref{ukip} and Theorem \ref{primoteorema} it is important to first classify the $7$-dimensional abstract cores up to isomorphism.
This is the content of \S \ref{sec:7cores}.
\subsection{Classification of abstract cores in dimension \texorpdfstring{$7$}{7}}
\label{sec:7cores}
\setcounter{table}{0}
A simple 
argument yields three main classes of abstract cores associated with strongly regular $7$-dimensional CR manifolds.
\begin{table}[H]
\begin{centering}
\makebox[\textwidth]{%
\begin{tabular}{|c|c|c|c|c|c|}
\hline
$\;\;\;\text{Class}^{\phantom{C^C}}$ &  $\gm^{-2}$  & $\gm^{-1}$ & $\gm^0$ & $\gm^{1}$ & $\gm^{p}\;(p> 1)$ \\
\hline
\hline
&&&&\\[-3mm]
(A) & $\bR$
& $\bC^3$ & $0$ & $0$ & $0$ 
\\
\hline
\hline
&&&&\\[-3mm]
(B) & $\bR$ & $\bC^2$ & $\bC$ & $0$ & $0$
\\
\hline
\hline
&&&&\\[-3mm]
(C)& $\bR$ & $\bC^{\phantom{1}}$  & $\bC$ & $\bC$ & $0$
\\
\hline
\end{tabular}}
\label{tab:1}
\caption{The abstract cores $\gm=\bigoplus\gm^p$ with $\dim(\gm)=7$.}
\end{centering}
\end{table}
Note that the case $\gm^{-2}=\bR$, $\gm^{-1}=\bC$, $\gm^0=\bC^2$ and $\gm^{p}=0$ for all $p>0$ is not permissible by \eqref{dimensione}.
Class (A) correspond to the Levi-nondegenerate case described in Example \ref{example00} and it is easy to see that there exists just one core in class (C), i.e., the ``$3$-nondegenerate'' core
\begin{multline}
\label{kore}
\gm^{-2}=\gc^{-2}=\left\langle e^{-2}\right\rangle\;,\qquad\gm^{-1}=\gc^{-1}=\left\langle e_1,e_2\right\rangle\qquad\text{and}\\
\gm^{0(10)}=\mathfrak{M}^{0(10)}\;,\qquad\gm^{1(10)}=\mathfrak{M}^{1(10)}\,.\quad
\end{multline}
The description of the equivalence classes of cores in (B) is more involved and splits into $\sgn(\mathfrak J)=(2,0)$ and  $\sgn(\mathfrak J)=(1,1)$. 
For our purposes it is convenient to consider a {\it complex-symplectic basis} $\left\{e_{1},\dots, e_{4}\phantom{C^{C^C}}\!\!\!\!\!\!\!\!\!\!\!\!\right\}$ of $\gc^{-1}\simeq\bR^4$, that is a real basis satisfying
\begin{equation}
\label{complexsymplectic}
\begin{cases}
B(e_i,e_{i+2})=-B(e_{i+2},e_i)=1\;\;&\text{for all}\;\;1\leq i\leq 2,\\
B(e_i,e_j)=0\;\;&\text{otherwise},\\
\mathfrak J(e_i)=-e_{i+2}\;,\;\;\mathfrak J(e_{i+2})=e_{i}\;\;&\text{for all}\;\; 1\leq i\leq r,\\
\mathfrak J(e_i)=e_{i+2}\;,\;\;\mathfrak J(e_{i+2})=-e_{i}\;\;&\text{for all}\;\; r+1\leq i\leq 2.
\end{cases}
\end{equation}
We also record here, for later use in \S \ref{exexamples}, that another kind of natural basis can be considered when $\sgn(\mathfrak J)=(1,1)$. In this case
\begin{equation}
\label{complexWitt}
\begin{cases}
B(e_i,e_{i+2})=-B(e_{i+2},e_i)=1\;\;&\text{for all}\;\;1\leq i\leq 2,\\
B(e_i,e_j)=0\;\;&\text{otherwise},\\
\mathfrak J(e_1)=-e_{4}\;,\;\;&\mathfrak J(e_{4})=e_{1},\\
\mathfrak J(e_2)=-e_{3}\;,\;\;&\mathfrak J(e_{3})=e_{2},
\end{cases}
\end{equation}
and we call such a basis a {\it complex-Witt basis} of $\gc^{-1}$. It is easy to see that the associated vectors
\begin{equation}
\label{Witt-symplectic}
\begin{split}
e_1'&=\frac{e_1+e_2}{\sqrt{2}}\;,\qquad e_2'=\frac{e_1-e_2}{\sqrt{2}}\;,\\
e_3'&=\frac{e_3+e_4}{\sqrt{2}}\;,\qquad e_4'=\frac{e_3-e_4}{\sqrt{2}}\;,\\
\end{split}
\end{equation}
constitute a complex-symplectic basis $\left\{e'_{1},\dots, e'_{4}\phantom{C^{C^C}}\!\!\!\!\!\!\!\!\!\!\!\!\right\}$ of $\gc^{-1}$.

To classify the abstract core in class (B) of  a fixed signature we first need some simple observations concerning the  action of the Lie group $\Aut(\gc,\mathfrak J)$ 
(recall the discussion above Definition \ref{Knondeg}). 
First of all we note that any core $\gm=\gm^{-2}\oplus\gm^{-1}\oplus\gm^{0}=\gc^{-2}\oplus\gc^{-1}\oplus\gm^0$ is fully determined by the complex line $\gm^{0(10)}\subset\mathfrak M^{0(10)}\simeq S^2\bC^2$ given by the $\mathfrak{J}$-holomorphic part of $\wh\gm^0=\gm^{0(10)}\oplus\overline{\gm^{0(10)}}$. Furthermore the natural action of $\Aut(\gc,\mathfrak J)$ on $S^2\bC^2$ factors through the quotient $K_\sharp=\Aut(\gc,\mathfrak J)/\bZ_2$, namely to the Lie group
$K_\sharp=\bC^\times\cdot K$ 
associated with the connected and closed subgroup $K$ of $\SL_3(\bR)$ given by
\begin{align*}
K&=\mathrm{SO}_3(\bR)\simeq\mathrm{SU}(2)/\bZ_2 \;\;&\text{if}\;\;\sgn(\mathfrak J)=(2,0)\;,\\ 
K&=\mathrm{SO}^+(2,1)\simeq\SU(1,1)/\bZ_2\;\;&\text{if}\;\;\sgn(\mathfrak J)=(1,1)\;.
\end{align*} 
More explicitly we fix the basis $(\e_\alpha)$ of $\mathfrak{sl}_2(\bC)\simeq S^2\bC^2$  given by
\beq
\label{ortbasis}
\begin{split}
\e_1&=\begin{pmatrix} 0 & -1 \\ 1 & 0 \end{pmatrix}\simeq-\frac{1}{2}(e_1^{10}\odot e_1^{10}+e_2^{10}\odot e_2^{10})\;,\\
\e_2&=\begin{pmatrix} 0 & i \\ i & 0 \end{pmatrix}\simeq\frac{i}{2}(e_1^{10}\odot e_1^{10}-e_2^{10}\odot e_2^{10})\;,\\
\e_3&=\begin{pmatrix} i & 0 \\ 0 & -i \end{pmatrix}\simeq-i(e_1^{10}\odot e_2^{10})\;\qquad
\end{split}
\eeq
if $\sgn(\mathfrak{J})=(2,0)$ and
\beq
\label{ortbasis2}
\begin{split}
\e_1&=\begin{pmatrix} 0 & 1 \\ 1 & 0 \end{pmatrix}=\frac{1}{2}(e_1^{10}\odot e_1^{10}-e_2^{10}\odot e_2^{10})\;,\\
\e_2&=\begin{pmatrix} 0 & -i \\ i & 0 \end{pmatrix}=-\frac{i}{2}(e_1^{10}\odot e_1^{10}+e_2^{10}\odot e_2^{10})\;,\\
\e_3&=\begin{pmatrix} i & 0 \\ 0 & -i \end{pmatrix}=-i(e_1^{10}\odot e_2^{10})
\end{split}
\eeq
if $\sgn(\mathfrak J)=(1,1)$, so that $\bR^3=\operatorname{span}_{\bR}\left\{\e_1,\e_2,\e_3\right\}$ equals $\mathfrak{su}(2)$ 
and $\mathfrak{su}(1,1)$, respectively. 
Using this identification, the action of $\Aut(\gc,\mathfrak J)$ on $S^2\bC^2$  factors to the representation of $K_\sharp$ on $V=\bC^3=\operatorname{span}_{\bC}\left\{\e_1,\e_2,\e_3\right\}$
\beq
\label{eq:barile}
\rho:K_\sharp\longrightarrow \GL(V)
\eeq 
given by the multiplications by the non-zero complex scalars and the $\bC$-linear extension to $V$ of the natural action of $K$ on $\bR^3$:
\begin{align}
\label{mori0o}
\notag(c,A)\cdot z&= c\cdot(Ax+iAy)\\
&=e^{i\vartheta}\cdot (\rho Ax+i\rho Ay)\qquad\qquad\\
\notag &=(\rho \cos\vartheta\cdot  Ax-\rho \sin\vartheta\cdot Ay)+ i(\rho \cos\vartheta\cdot Ay+\rho\sin\vartheta\cdot  Ax)\qquad
\end{align}
where $c=\rho e^{i\vartheta}\in\bC^\times$, $A\in K$ and $z=x+iy\in V$. 
It follows that two cores $\gm$ and $\gm'$ of the same signature are isomorphic if and only if corresponding
elements $z,z'\in V^\times$ (determined up to nonzero complex scalars) lie on the same $K_\sharp$-orbit. In other words we shall consider the projective plane $X=\mathbb P^{2}(\bC)$ with the natural projective representation of $K_\sharp$ and identify the orbit space
\begin{align*}
\mathbb X&=X/K_\sharp\\
&=\left\{K_\sharp\cdot [z]\,|\phantom{C^{C^C}}\!\!\!\!\!\!\!\!\!\!\! [z]\in X\right\}
\end{align*}
with $V^\times/K_\sharp$ via the canonical equivariant projection from $V^{\times}$ to $X$. 
\smallskip\par
Let now $N_\sharp\subset K_\sharp$ be the stabilizer of $[z]$ and $\gn_\sharp=Lie(N_\sharp)$ its associated Lie algebra.
It is a subalgebra of $Lie(K_\sharp)=\bC\oplus\mathfrak{su}(2)$ or $\bC\oplus\mathfrak{su}(1,1)$.
\begin{definition}
\label{prosciuttocotto}
An orbit $K_\sharp\cdot [z]$ is called of {\it type $N_\sharp$} if $K_\sharp\cdot[z]\simeq K_\sharp/N_\sharp$.
\end{definition} 
To determine the $K_\sharp$-orbits on $V$ it is also convenient, as an intermediate step in the proof of the following Proposition \ref{firstorbitprop} and Proposition \ref{secondorbitprop}, to consider the subgroup $\wt K=\bR^\times\cdot K$ of $K_\sharp$ and the restriction
\beq
\label{eq:rapres}
\wt\rho=\rho|_{\wt K}:\wt K\longrightarrow\GL(V)
\eeq 
of the representation \eqref{eq:barile}. Indeed $V\simeq\bR^3\oplus\bR^3$ as a $\wt K$-module, but not as a $K_\sharp$-module.
Moreover $\bC^\times\cdot \wt H\subset N_\sharp$  where $\wt H$ is the stabilizer of $z$ in $\wt K$ (we remark that this inclusion is in general proper, as there are elements $A\in K$ and $c\in\bC^\times$ with the property that $A\cdot z=c\cdot z$). 
\begin{definition}
\label{def:admissibility}
An orbit $K_\sharp\cdot [z]$ and the associated equivalence class of cores is called {\it admissible} if 
$\mathfrak{n}_\sharp\neq\bC$ (i.e., if the connected component of $N_\sharp$ is not as smallest as possible). 
\end{definition}
We now deal with the two signatures separately.
\smallskip\par\noindent
\subsubsection{The case $\mathbf{\sgn(\mathfrak J)=(2,0)}$.}\hfill\vskip0.1cm\par\setcounter{table}{1}
Let $\rho:K_\sharp=\mathbb C^\times\cdot K\longrightarrow \GL(V)$ be
given by the multiplications by $\bC^\times$ and the $\bC$-linear extension to $V$ of the natural action of $K=\mathrm{SO}_{3}(\bR)$ on the Euclidean vector space $(\bR^3,\left\langle\cdot,\cdot\right\rangle)$. 
\begin{proposition}\label{firstorbitprop}
Every element $z\in V^\times$ is $K_\sharp$-related to one and only one of the canonical forms $\e_1+it \e_2$ for some $t\in [0,1]$.
The corresponding $K_\sharp$-orbit is of type $N_\sharp$ with
$\mathfrak{n}_\sharp=\mathbb{C}\oplus \so_2(\bR)$ if $t=0,1$ and
$\mathfrak{n}_\sharp=\mathbb{C}$ otherwise. 
\end{proposition}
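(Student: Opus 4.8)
The plan is to reduce an arbitrary $z\in V^\times$ to the stated normal form by exploiting two quadratic quantities attached to the action \eqref{mori0o}, then to separate the resulting one‑parameter family by a genuine invariant, and finally to read off the infinitesimal stabilizer by a direct computation in $\so_3(\bR)$. First I would write $z=x+iy$ with $x,y\in\bR^3$ and record the action at the level of the pair $(x,y)$: an element $A\in K=\SO_3(\bR)$ acts diagonally as $(Ax,Ay)$, a positive scalar $\rho$ rescales both components, and the phase $e^{i\vartheta}$ acts as the planar rotation $(x,y)\mapsto(\cos\vartheta\,x-\sin\vartheta\,y,\ \sin\vartheta\,x+\cos\vartheta\,y)$. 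The two natural quantities are the complex–bilinear extension of the Euclidean form, $\langle z,z\rangle=|x|^2-|y|^2+2i\langle x,y\rangle$, and the Hermitian norm $\langle z,\bar z\rangle=|x|^2+|y|^2=\|z\|^2>0$. Both are $K$‑invariant (as $A$ is orthogonal), while $\langle cz,cz\rangle=c^{2}\langle z,z\rangle$ and $\|cz\|^{2}=\rho^{2}\|z\|^{2}$ for $c=\rho e^{i\vartheta}$, where $\e_1,\e_2,\e_3$ is orthonormal for $\langle\cdot,\cdot\rangle$ by \eqref{ortbasis}.

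For existence I would first use the phase $e^{i\vartheta}$ to rotate $\langle z,z\rangle$ to a non‑negative real number; after relabelling, this forces $\langle x,y\rangle=0$ and $|x|\geq|y|$. If $|y|>0$ then $x,y$ are orthogonal and nonzero, so completing $x/|x|,\,y/|y|$ to a right‑handed orthonormal frame and rotating it onto $(\e_1,\e_2,\e_3)$ by an element of $\SO_3(\bR)$ gives $x=|x|\e_1$, $y=|y|\e_2$; a final rescaling by $1/|x|$ yields $\e_1+it\e_2$ with $t=|y|/|x|\in(0,1]$. If $|y|=0$ then $z$ is real, and a rotation together with a rescaling brings it to $\e_1$, i.e. $t=0$. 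Thus every orbit meets $\{\e_1+it\e_2:t\in[0,1]\}$, with $t=1$ corresponding exactly to the degenerate locus $\langle z,z\rangle=0$ (equivalently $|x|=|y|$).

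For uniqueness I would introduce the honest $K_\sharp$‑invariant
$$
I(z)=\frac{|\langle z,z\rangle|}{\|z\|^{2}},\qquad I(\e_1+it\e_2)=\frac{1-t^{2}}{1+t^{2}},
$$
which is well defined on $V^\times$ and invariant by the transformation rules above. Since $t\mapsto\dfrac{1-t^{2}}{1+t^{2}}$ is a strictly decreasing bijection of $[0,1]$ onto $[0,1]$, distinct values of $t$ give distinct values of $I$, hence distinct orbits; this yields the ``one and only one'' assertion.

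For the stabilizer I would note that $\bC^\times$ acts on $V$ by genuine scalars, so it fixes every $[z]\in X$; hence $\bC=Lie(\bC^\times)\subset\mathfrak{n}_\sharp$ unconditionally, and it remains to determine
$$
\mathfrak{n}_\sharp=\bC\oplus\{\xi\in\so_3(\bR)\ :\ \xi_\bC z\in\bC z\},\qquad \xi_\bC(x+iy)=\xi x+i\,\xi y.
$$
Writing $\xi=\xi_v$ via the isomorphism $v\mapsto\xi_v$, $\xi_v w=v\times w$, of $\bR^3$ onto $\so_3(\bR)$, the condition $\xi_\bC z\in\bC z$ for $z=\e_1+it\e_2$ becomes a small real linear system in $v=(v_1,v_2,v_3)$; comparing the $\e_1,\e_2,\e_3$–components forces $v_1=v_2=0$ together with a compatibility relation of the form $\beta(t^{2}-1)=0$. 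For $0<t<1$ this yields $v=0$, so $\mathfrak{n}_\sharp=\bC$, whereas for $t=0$ (resp. $t=1$) the constraint is vacuous along the $\e_1$–axis (resp. the $\e_3$–axis), giving a copy of $\so_2(\bR)$ and hence $\mathfrak{n}_\sharp=\bC\oplus\so_2(\bR)$. I expect the main obstacle to be the existence step: one must coordinate the noncompact phase $e^{i\vartheta}$, used precisely to orthogonalize $x$ and $y$, with the compact group $K$, which then has just enough freedom to align the orthogonal pair with $\e_1,\e_2$; the degenerate endpoints $\langle z,z\rangle=0$ and $y=0$ (where the orbit type jumps) must be isolated and matched consistently with the stabilizer computation.
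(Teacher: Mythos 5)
Your proposal is correct, and it takes a genuinely different route from the paper's proof in Appendix~\ref{appendice2}. The paper proceeds in two stages: it first classifies the orbits of the intermediate group $\wt K=\bR^\times\cdot K$, splitting according to whether $x,y$ are linearly dependent over $\bR$, parametrizes the generic orbits by the upper half plane $\mathrm H$ via the quantities \eqref{enzooo}, and then computes the induced circle action \eqref{stanco1o}--\eqref{stanco2o} on $\mathrm H$ explicitly, extracting the representatives $t\in[0,1]$ by a continuity argument; the stabilizers are determined at the group level from \eqref{mori0o}, yielding $N_\sharp\simeq\bC^\times\cdot\mathrm{O}_2(\bR)$, $\bC^\times\cdot\bZ_2$, $\bC^\times\cdot\SO_2(\bR)$ and hence the stated $\gn_\sharp$. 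You instead obtain existence of the normal form in one stroke, by rotating the complex-bilinear square $\left\langle z,z\right\rangle$ to a nonnegative real with a phase (which forces $\left\langle x,y\right\rangle=0$ and $|x|\geq|y|$) and aligning the orthogonal pair with $\e_1,\e_2$ by $\SO_3(\bR)$; and you obtain uniqueness from the single scalar invariant $I(z)=|\left\langle z,z\right\rangle|/\|z\|^2$, whose values $(1-t^2)/(1+t^2)$ on the canonical forms are strictly monotone in $t\in[0,1]$. This cleanly replaces the paper's half-plane dynamics and continuity argument, and is arguably more transparent; what it does not deliver is the finer group-level information (the $\bZ_2$ and $\mathrm{O}_2$ versus $\SO_2$ distinctions among the full stabilizers $N_\sharp$), but the proposition only asserts the Lie algebra $\gn_\sharp$, so your infinitesimal computation via $v\mapsto(w\mapsto v\times w)$ — correctly producing $v_2=0$, $tv_1=0$ and the compatibility relation $v_3(1-t^2)=0$, hence the three cases — suffices. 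One stray remark in your closing comment: the phase group $\{e^{i\vartheta}\}$ is a compact circle (it is $\bC^\times$ as a whole that is noncompact); this slip does not affect the argument.
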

\begin{proof} 
Postponed to Appendix \ref{appendice2}.
\end{proof}
We now directly apply this result to the classification of abstract cores.
\begin{theorem}
\label{firstcores}
Every $7$-dimensional abstract core $\gm=\bigoplus\gm^p$ of type (B) and $\sgn(\mathfrak J)=(2,0)$ is of the form
\begin{align*}
\gm^p&=0\,\;\;\text{for all}\;\;p\neq-2,-1,0\,,\\
\gm^{-2}&=\gc^{-2}=\left\langle\phantom{C^{C^C}}\!\!\!\!\!\!\!\!\!\!\!\! e^{-2}\right\rangle\,,\\
\gm^{-1}&=\gc^{-1}=\left\langle\phantom{C^{C^C}}\!\!\!\!\!\!\!\!\!\!\!\!  e_1,e_2,e_3,e_4\right\rangle\,,\\
\gm^{0}&=\Re(\gm^{0(10)}\oplus\overline{\gm^{0(10)}})\,,
\end{align*}
and isomorphic with one and only one of the canonical forms $\gm_t$ in Table \ref{tablecore20}, $t\in[0,1]$. 
Any $7$-dimensional and strongly regular CR manifold $(M,\cD,\cJ)$ of type
$\gm_t$ is $2$-nondegenerate and it is not (even locally) CR diffeomorphic to any other $(M',\cD',\cJ')$ of type $\gm_{t'}$ if $t\neq t'$.\end{theorem}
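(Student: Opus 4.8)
The plan is to reduce the whole statement to the orbit classification of Proposition~\ref{firstorbitprop} together with the dictionary between cores and $K_\sharp$-orbits established in \S\ref{sec:7cores}, and then to promote the resulting classification of cores up to isomorphism to a classification up to \emph{local} CR diffeomorphism by exploiting that the core is a pointwise local CR invariant.

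First I would fix the shape of a core $\gm$ of class (B) with $\sgn(\mathfrak J)=(2,0)$. By Proposition~\ref{bed}(iii) every abstract core of hypersurface type embeds into $\mathfrak M$, so up to isomorphism I may assume $\gm^{-2}=\gc^{-2}$, $\gm^{-1}=\gc^{-1}$ and $\gm^0=\Re(\gm^{0(10)}\oplus\overline{\gm^{0(10)}})$ with $\gm^{0(10)}\subseteq\mathfrak M^{0(10)}\simeq S^2\bC^2$. The class-(B) requirement $\dim_\bC\gm^0=1$ forces $\gm^{0(10)}$ to be a complex line, while $\gm^1=0$ makes the compatibility condition of \eqref{brachetto} vacuous in positive degree, so that conversely every complex line $\gm^{0(10)}\subseteq\mathfrak M^{0(10)}$ defines a genuine core. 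This already yields the displayed normal form for $\gm$.

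Next I would feed this into the orbit dictionary. By the discussion preceding Definition~\ref{Knondeg} and in \S\ref{sec:7cores}, two cores of equal signature are isomorphic exactly when the associated points of $X=\mathbb P^2(\bC)$ lie on one $K_\sharp$-orbit, and $\mathbb X=X/K_\sharp$ is identified with $V^\times/K_\sharp$ through the equivariant projection $V^\times\to X$ (this uses $\bC^\times\subset K_\sharp$). Proposition~\ref{firstorbitprop} classifies $V^\times/K_\sharp$ by the parameter $t\in[0,1]$ with representatives $\e_1+it\e_2$; taking $\gm_t^{0(10)}=\langle\e_1+it\e_2\rangle$ gives the canonical forms of Table~\ref{tablecore20}, and the ``one and only one'' assertion is precisely the uniqueness part of that proposition. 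The $2$-nondegeneracy is then immediate, since each $\gm_t$ satisfies $\gm^0\neq0$ and $\gm^p=0$ for $p\geq1$, whence $\operatorname{ht}(\gm_t)=0$; by Lemma~\ref{lemmino} the height of the core of a CR manifold equals $k-2$, so a manifold of type $\gm_t$ is $2$-nondegenerate.

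Finally, for the local CR inequivalence when $t\neq t'$ I would argue that the core is invariant under local CR diffeomorphisms. A local CR diffeomorphism $F$ satisfies $dF(\cD)=\cD'$ and intertwines $\cJ$ with $\cJ'$, hence carries $\cD^{10},\cD^{01}$ forward and commutes with brackets of vector fields; therefore it maps the Tanaka and Freeman filtrations and the higher order Levi forms of $M$ to those of $M'$, and by Lemma~\ref{lemmino} induces an isomorphism of cores $\gm_x\to\gm'_{F(x)}$ respecting $J$, the bracket and the maps $L^{p+2}$, i.e.\ a morphism in the sense of Definition~\ref{acore}. Consequently a local CR diffeomorphism between manifolds of types $\gm_t$ and $\gm_{t'}$ would force $\gm_t\simeq\gm_{t'}$ and hence $t=t'$ by the previous step. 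The bulk of the difficulty has deliberately been moved into Proposition~\ref{firstorbitprop}, so within this proof the only point requiring genuine care is verifying that $F$ respects the full structure of Definition~\ref{acore} — in particular the Levi datum $L^2$ — and not merely the underlying filtrations.
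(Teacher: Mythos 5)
Your proposal is correct and follows essentially the same route as the paper: it reduces the statement to the $K_\sharp$-orbit classification of Proposition~\ref{firstorbitprop} via the dictionary of \S\ref{sec:7cores} identifying isomorphism classes of cores with orbits of lines $\gm^{0(10)}\subset\mathfrak{M}^{0(10)}$, and then deduces $2$-nondegeneracy and local CR inequivalence from $\operatorname{ht}(\gm_t)=0$ together with the invariance of the core. The only difference is that the paper's proof leaves implicit the details you spell out, namely that every complex line in $\mathfrak{M}^{0(10)}$ actually defines a core (the positive-degree conditions of \eqref{brachetto} being vacuous) and that a local CR diffeomorphism induces an isomorphism of cores in the full sense of Definition~\ref{acore}, including the Levi datum $L^{2}$.
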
 
\begin{table}[H]
\begin{centering}
\begin{tabular}{|c|c|c|}
\hline
$\begin{gathered}\\ \gm_t=\bigoplus\gm_t^{p}\\ t\in[0,1]\end{gathered}$ & $\gm_t^{0(10)}{}^{\phantom{C^C}}$ & $\gn_\sharp^{\phantom{C^{C^C}}}$  \\
\hline
\hline
&&\\[-1mm]
$t=0,1$ & $\begin{gathered}\\ \\ (1+t)e_1^{10}\odot e_1^{10}+(1-t)e_2^{10}\odot e_2^{10}\end{gathered}$
& $\bC\oplus\so_2(\bR)$
\\
\cline{1-1}\cline{3-3}
&&\\[3mm]
$\begin{gathered} t\in(0,1)\\ \\ \end{gathered}$ &  & $\begin{gathered}\bC\\ \\ \end{gathered}$  
\\
\hline
\end{tabular}
\caption{The $7$-dimensional cores of signature $(2,0)$ up to isomorphism.}
\label{tablecore20}
\end{centering}
\end{table}
\begin{proof} 
Two abstract cores $\gm$ and $\gm'$
are equivalent if and only if the corresponding elements $z,z'\in V^\times$ lie on the same $K_\sharp$-orbit. 
The first part of the theorem follows then from 
Proposition \ref{firstorbitprop} and
the second part is clear as $\operatorname{ht}(\gm)=0$ and the core is an invariant of a CR manifold.
\end{proof}
\subsubsection{The case $\mathbf{\sgn(\mathfrak J)=(1,1)}$.}\hfill\vskip0.1cm\par\setcounter{table}{2}
In this case \eqref{eq:barile} 
is determined by the action of the connected Lorentz group $K=\mathrm{SO}^+(2,1)$ on the pseudo-Euclidean space $\bR^{2,1}$ with the orthonormal basis $(\e_\alpha)$ such that $\left\langle \e_1,\e_1\right\rangle=\left\langle \e_2,\e_2\right\rangle=-\left\langle \e_3,\e_3\right\rangle=1$. 
We recall that the orbits of $K$ on $\bR^{2,1}\backslash\{0\}$ split into five different types (see e.g. \cite{GMS, Sh}; the arguments are for $\bR^{3,1}$ but extend easily to our case too):
\beq
\label{Lororbits}
\begin{split}
S_{r>0}&=\left\{x\in\bR^{2,1}\;|\phantom{C^{C^C}}\!\!\!\!\!\!\!\!\!\!\!\!\left\langle x,x\right\rangle=r>0\right\}\;,\\
N^{+}&=\left\{x\in\bR^{2,1}\;|\phantom{C^{C^C}}\!\!\!\!\!\!\!\!\!\!\!\!\left\langle x,x\right\rangle=0\,,\,x_3>0\right\}\;,\\
N^{-}&=\left\{x\in\bR^{2,1}\;|\phantom{C^{C^C}}\!\!\!\!\!\!\!\!\!\!\!\!\left\langle x,x\right\rangle=0\,,\,x_3<0\right\}\;,\\
S_{r<0}^{+}&=\left\{x\in\bR^{2,1}\;|\phantom{C^{C^C}}\!\!\!\!\!\!\!\!\!\!\!\!\left\langle x,x\right\rangle=r<0\,,\,x_3>0\right\}\;,\\
S_{r<0}^{-}&=\left\{x\in\bR^{2,1}\;|\phantom{C^{C^C}}\!\!\!\!\!\!\!\!\!\!\!\!\left\langle x,x\right\rangle=r<0\,,\,x_3<0\right\}\;.\\
\end{split}
\eeq
Any orbit of the first three types is diffeomorphic to $S^1\times\bR$ and of the form $K/H$ where the stabilizer $H$ is non-compact and conjugated in $K$ to
the subgroup
$$
H\simeq\begin{dcases*} \text{the subgroup}\;\;\SO^{+}(1,1)\;\;\text{of boosts in the case}\;\;S_{r>0}\;,\\
 \text{the subgroup}\;\;\bR\;\;\text{of null rotations in the case}\;\;N^{\pm}\;.
\end{dcases*}
$$
For example, the stabilizer of $\e_1+\e_3$ is represented in the ``mixed basis'' $\{\e_1+\e_3,\e_2,\e_1-\e_3\}$ by the one-parameter group of null rotations 
$$\begin{pmatrix} 1 & -d/2 & -d^2/4 \\ 0 &1  & d \\ 0 &0 & 1 \end{pmatrix}\;,\qquad d\in\bR\;.$$
Any orbit $S^{\pm}_{r<0}$ is of the form $K/H\simeq \bR^2$ where $H\simeq\SO_{2}(\bR)$ is compact subgroup of ordinary rotations. The orbits \eqref{Lororbits} are surfaces of transitivity for $\mathrm{O}^+(2,1)$ too whereas those for the general Lorentz group $\mathrm{O}(2,1)$
are of only three types, namely $S_{r>0}$, $N=N^{+}\cup N^-$ and $S_{r<0}=S^+_{r<0}\cup S^-_{r<0}$. These three surfaces are surfaces of transitivity for $\mathrm{SO}(2,1)$ too. 

It follows
that the orbits of $\wt K=\bR^\times\cdot K$ on $\bR^{2,1}\backslash\{0\}$ are exactly three:
$$S_{>0}=\bigcup S_{r>0}\;,\qquad S_{<0}=\bigcup S_{r<0}\qquad\text{and}\;\;\;N\;,$$
with only $S_{>0}$ connected. The following table gives representatives for these orbits together with the associated
stabilizers $\wt H\subset \wt K$. 
\vskip0.1cm
\begin{table}[H]
\begin{centering}
\begin{tabular}{|c|c|c|}
\hline
$\;\;\;\text{Orbit}^{\phantom{C^C}}$ & \text{Representative} & $\wt H^{\phantom{C^{C^C}}}$   \\
\hline
\hline
&&\\[-1mm]
$S_{>0}$ & $\e_1$
& $\SO^{+}(1,1)\cup \SO^{+}(1,1)\cdot\begin{psmallmatrix} 1 & 0 & 0 \\ 0 &1  &0 \\ 0 &0 & -1 \end{psmallmatrix}_{\phantom{C_{C_C}}}$
\\
\hline
\hline
&&\\[-1mm]
$N$& $\e_1+\e_3$  & $\bR^+\ltimes\bR_{\phantom{C_{C_C}}}$
\\
\hline
\hline
&&\\[-1mm]
$S_{<0}$& $\e_3$ & $\mathrm{SO}_2(\bR)_{\phantom{C_{C_C}}}$
\\
\hline
\end{tabular}
\label{tableorbit2}
\caption{The orbits of $\wt K=\bR^\times\cdot\mathrm{SO}^{+}(2,1)$ on $\bR^{2,1}\backslash\{0\}$.}
\end{centering}
\end{table}
Here the subgroup $\bR^+\ltimes\bR$ is the semidirect product of the subgroup $\bR$ of null rotations and the  one parameter group of ``dilations'' given in the mixed basis by
\begin{equation*}
\bR^+=\left\{\begin{pmatrix} 1 & 0 & 0 \\ 0 & \rho  &0 \\ 0 &0 & \rho^2 \end{pmatrix}\,|\phantom{C^{C^C}}\!\!\!\!\!\!\!\!\!\! \rho>0\right\}\;.
\end{equation*} 
A deeper analysis of the representation \eqref{eq:barile} gives the following.
\begin{proposition}
\label{secondorbitprop} 
Every element $z\in V^\times$ is $K_\sharp$-related to one and only one of the canonical forms:
\begin{itemize}
\item[1.] $\e_1+it\e_2$ for $t\in [-1,1]$;
\item[2.] $\e_1+\e_3+i(t(\e_1+\e_3)+(\e_1-\e_3))$ for $t\in\bR$;
\item[3.] $\e_1+\e_3\pm i\e_2$;
\item[4.] $\e_3$;
\item[5.] $\e_1+\e_3$.
\end{itemize}
The corresponding $K_\sharp$-orbit is of type $N_\sharp$ with
\begin{itemize}
\item[--] $\mathfrak{n}_\sharp=\bC\oplus\so_2(\bR)$ for $\e_1\pm i\e_2$ and $\e_3$;
\item[--] $\mathfrak{n}_\sharp=\bC\oplus\so(1,1)$ for $\e_1$; 
\item[--] $\mathfrak{n}_\sharp=\bC\oplus(\bR\inplus\bR)$ for $\e_1+\e_3$;
\item[--] $\mathfrak{n}_\sharp=\bC$ otherwise.
\end{itemize}
\end{proposition}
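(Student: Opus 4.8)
The plan is to follow the same strategy as in the proof of Proposition \ref{firstorbitprop}, but paying close attention to the causal types forced by the Lorentzian signature. Since $\bC^\times\subset K_\sharp$ acts by scalars, classifying $K_\sharp$-orbits on $V^\times$ is the same as classifying $K=\mathrm{SO}^+(2,1)$-orbits on $\mathbb{P}^2(\bC)$ for the complexified standard representation $A\cdot[x+iy]=[Ax+iAy]$. First I would record two invariants built from the $\bC$-bilinear, respectively sesquilinear, extensions of the Lorentzian form $\langle\cdot,\cdot\rangle$ to $V$: the bilinear
\[
Q(z)=\langle x,x\rangle-\langle y,y\rangle+2i\langle x,y\rangle
\]
and the Hermitian $H(z)=\langle x,x\rangle+\langle y,y\rangle$, where $z=x+iy$. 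A direct computation using \eqref{mori0o} gives $Q(cAz)=c^2Q(z)$ and $H(cAz)=|c|^2H(z)$ for $c\in\bC^\times$, $A\in K$; hence the vanishing of $Q$, the sign of $H$, and the scale- and phase-independent quantity $|Q(z)|/|H(z)|$ (when $H\neq0$) are $K_\sharp$-invariant. Note also that $Q(z)=0$ exactly when $x\perp y$ and $\langle x,x\rangle=\langle y,y\rangle$.

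The key normalization is that whenever $Q(z)\neq 0$ the rotation $e^{i\vartheta}\in\bC^\times$ can be chosen to annihilate $\langle x,y\rangle$ (the equation $\sin2\vartheta\,(\langle x,x\rangle-\langle y,y\rangle)+2\cos2\vartheta\,\langle x,y\rangle=0$ is solvable precisely unless $Q(z)=0$), while if $Q(z)=0$ then $x\perp y$ already. Thus every $z$ is $K_\sharp$-equivalent to one with $x\perp y$, and I would classify the orthogonal pair $(x,y)$ by the causal types of $x$ and $y$, using Witt's theorem and the orbit description \eqref{Lororbits} (Table \ref{tableorbit2}) to move the plane $P=\operatorname{span}_\bR\{x,y\}$ to a standard position under $K$, and the dilation $\rho$ and the quarter-turn $c=i$ (which swaps $x,y$) to normalize the remaining scalars. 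The cases are: $P$ spacelike (both $x,y$ spacelike), giving family~$1$; $P$ timelike (one spacelike, one timelike), giving family~$2$ after rewriting the orthogonal form $\e_1+is\e_3$ ($s\in\bR$) in the null-adapted directions $\e_1\pm\e_3$; $P$ degenerate (one spacelike, one null), giving family~$3$; and $\dim P\leq 1$, where after using $c\in\bC^\times$ to make $z$ real one is reduced to classifying a single real vector by causal type, producing $\e_1$, $\e_3$ and $\e_1+\e_3$ (families~$1$ with $t=0$, $4$ and $5$). Exhaustiveness follows because the remaining combinations of orthogonal types (two orthogonal null vectors, or a timelike vector orthogonal to a null vector) are impossible in $\bR^{2,1}$.

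To prove that the canonical forms are pairwise inequivalent — and in particular to get the correct ranges — I would combine the invariants above with one further chirality invariant. For a nondegenerate plane $P$ the direction of $x\times y\in P^\perp$ (defined by $\langle x\times y,\cdot\rangle=\mathrm{vol}(x,y,\cdot)$) is $K_\sharp$-invariant, since $K$ preserves orientation and time-orientation while the $\bC^\times$-action rescales $x\times y$ by $\rho^2>0$. When $P$ is spacelike, $P^\perp=\bR\e_3$ is timelike, so the future/past character of $x\times y$ is preserved and distinguishes $\e_1+it\e_2$ from $\e_1-it\e_2$; this is exactly why family~$1$ runs over the full interval $[-1,1]$ rather than $[0,1]$, in contrast with the compact case of Proposition \ref{firstorbitprop}, where the $\pi$-rotation about the $\e_1$-axis (which reverses time-orientation and is therefore excluded from $\mathrm{SO}^+(2,1)$) identifies $\pm t$. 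When $P$ is timelike, $P^\perp=\bR\e_2$ is spacelike and its direction can be reversed by a $\pi$-rotation inside $\operatorname{span}\{\e_1,\e_2\}$, so no chirality survives and family~$2$ is recovered from the pair $(\operatorname{sgn}H,\ |Q|/|H|)$, i.e.\ from $t\in\bR$; similarly the two signs in family~$3$ are inequivalent because no element of the connected group fixes the null line $\bR(\e_1+\e_3)$ while reversing $\e_2$. Finally the stabilizer algebras $\gn_\sharp$ are read off case by case from the stabilizers in \eqref{Lororbits}/Table \ref{tableorbit2} together with the $\bC$ coming from $\bC^\times$, taking into account the elements $A\in K$, $c\in\bC^\times$ with $Az=cz$ noted before Definition \ref{def:admissibility}.

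The hard part will be the organization of the degenerate and timelike cases: unlike the compact situation, the plane $P$ may be null or Lorentzian, its stabilizer in $K$ is noncompact (boosts and null rotations), and the residual circle inside $\bC^\times$ interacts with these to produce both a genuine continuous modulus (family~$2$) and the doubling phenomena just described. Verifying that the listed forms are simultaneously exhaustive and non-redundant in these cases — equivalently, that $\{\,|Q|/|H|,\ \operatorname{sgn}H,\ \text{causal type of }P,\ \text{chirality}\,\}$ is a complete separating set of invariants — is the main technical point, and is the step I would expect to relegate to the appendix alongside the proof of Proposition \ref{firstorbitprop}.
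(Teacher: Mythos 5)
Your proposal is correct in outline, but it takes a genuinely different route from the paper's. The paper's Appendix \ref{appendice3} normalizes in the opposite order: it first classifies the orbits of $\wt K=\bR^\times\cdot K$ on $V^\times$, splitting $z=x+iy$ according to whether $x,y$ are linearly dependent or not, listing explicit $\wt K$-representatives \eqref{mori1} and the six types \textbf{(i)}--\textbf{(vi)} together with their stabilizers $\wt H$ (the table of $\wt K$-orbits and \eqref{Lororbits}), and only afterwards quotients by the residual circle in $\bC^\times$, computing its action on the parameters in closed form (\eqref{stanco1oo}--\eqref{stanco2oo}) and extracting fundamental domains by continuity arguments, with the stabilizers $\gn_\sharp$ read off from \eqref{mori0o}. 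You instead use the phase first, rotating $z$ so that $\langle x,y\rangle=0$, and then classify orthogonal real pairs by causal type, proving non-redundancy via a separating set of invariants ($Q$, $H$, the causal type of $P=\operatorname{span}_\bR\{x,y\}$, chirality). Your route is more conceptual: in particular your chirality invariant explains structurally why the range of $t$ is the full interval $[-1,1]$ here as opposed to $[0,1]$ in Proposition \ref{firstorbitprop}, a fact which in the paper is only visible from the sign behaviour of $t_2'$ in \eqref{stanco2oo}; conversely, the paper's computational normalization produces the canonical forms and stabilizers directly, with no auxiliary invariant theory.

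Three loose ends in your sketch should be fixed. First, the normalization equation $\tfrac12\sin2\vartheta\,(\langle x,x\rangle-\langle y,y\rangle)+\cos2\vartheta\,\langle x,y\rangle=0$ is solvable for \emph{every} $z$ (it has the form $a\sin2\vartheta+b\cos2\vartheta=0$), not ``precisely unless $Q(z)=0$''; when $Q=0$ it holds identically, which is harmless but is exactly where the whole residual circle acts on orthogonal pairs. Second, for $t\neq0$ the canonical form $\e_1+\e_3+i\bigl(t(\e_1+\e_3)+(\e_1-\e_3)\bigr)$ is \emph{not} a complex scalar multiple of any orthogonal form $\e_1+is\e_3$ (only $t=0$ is, via the identity $\e_1+\e_3+i(\e_1-\e_3)=(1+i)(\e_1-i\e_3)$), so family $2$ is not obtained by merely ``rewriting in null-adapted directions'': one needs a further rotation $e^{i\vartheta}$ with $\tan\vartheta=\pm1/s$ making the real part null, followed by a boost and a dilation --- precisely the maneuver the paper performs when it absorbs its types \textbf{(iv)}--\textbf{(vi)} into \textbf{(ii)}--\textbf{(iii)}. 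Third, your chirality invariant is defined only for nondegenerate $P$, while family $3$ has degenerate $P$, and your substitute argument (``no element of the connected group fixes the null line while reversing $\e_2$'') does not, as stated, rule out equivalences that move the whole configuration. The repair is easy: for an orthogonal pair spanning a degenerate plane, $x\times y$ is a nonzero null vector spanning the radical $P\cap P^\perp$, and its future/past character is $K_\sharp$-invariant and separates the two signs. With these points patched, your invariant-theoretic argument is complete and yields the same classification and stabilizer algebras as the paper's proof.
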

\begin{proof} 
Postponed to Appendix \ref{appendice3}.
\end{proof}
In the following $\left\{e_{1},\dots, e_{4}\phantom{C^{C^C}}\!\!\!\!\!\!\!\!\!\!\!\!\right\}$ is, as usual in this section, a complex-symplectic basis of $\gc^{-1}$  (and not a complex-Witt basis).
\begin{theorem}
\label{secondcores}
Every $7$-dimensional abstract core $\gm=\bigoplus\gm^p$ of type (B) and $\sgn(\mathfrak J)=(1,1)$ is of the form
\begin{align*}
\gm^p&=0\;\;\text{for all}\;\;p\neq-2,-1,0\,,\\
\gm^{-2}&=\gc^{-2}=\left\langle\phantom{C^{C^C}}\!\!\!\!\!\!\!\!\!\!\!\! e^{-2}\right\rangle\,,\\
\gm^{-1}&=\gc^{-1}=\left\langle\phantom{C^{C^C}}\!\!\!\!\!\!\!\!\!\!\!\!  e_1,e_2,e_3,e_4\right\rangle\,,\\
\gm^{0}&=\Re(\gm^{0(10)}\oplus\overline{\gm^{0(10)}})\,
\end{align*}
and isomorphic with one and only one of the canonical forms $\gm_t$, $\wt\gm_t$, $\gm_\pm$, $\gm_{<0}$ and $\gm_{\rm null}$ in Table \ref{simbacacca}.
Any $7$-dimensional and strongly regular CR manifold $(M,\cD,\cJ)$ of type $\gm$ where $\gm$ is
a core in Table \ref{simbacacca} is $2$-nondegenerate and it is not (even locally) CR diffeomorphic to any other $(M',\cD',\cJ')$ whose core $\gm'$ is also in Table \ref{simbacacca} and different from $\gm$.
\end{theorem}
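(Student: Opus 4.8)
The plan is to follow the proof of Theorem~\ref{firstcores} verbatim, merely feeding in the signature $(1,1)$ orbit classification of Proposition~\ref{secondorbitprop} in place of Proposition~\ref{firstorbitprop}. The essential input is the reduction established in \S\ref{sec:7cores}: a core of class (B) and fixed signature $(1,1)$ is completely determined by its $\mathfrak J$-holomorphic line $\gm^{0(10)}=\bC z\subset \mathfrak{M}^{0(10)}\simeq S^2\bC^2\simeq V$, and two such cores $\gm,\gm'$ are isomorphic if and only if the corresponding $z,z'\in V^\times$ lie on one and the same $K_\sharp$-orbit, equivalently $[z],[z']\in X=\mathbb{P}^2(\bC)$ are $K_\sharp$-related. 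Hence the classification of these cores \emph{is} the classification of $K_\sharp$-orbits on $V^\times$, and the first assertion of the theorem is nothing but a restatement of Proposition~\ref{secondorbitprop}.

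Concretely, first I would invoke Proposition~\ref{secondorbitprop} to obtain the complete, non-redundant list of orbit representatives $z\in V^\times$ together with the stabilizer algebras $\gn_\sharp$. Each representative is then converted into core data by setting $\gm^{0(10)}=\bC z$ and expanding $z$ through the identifications \eqref{ortbasis2} of $\e_1,\e_2,\e_3$ in terms of the products $e_i^{10}\odot e_j^{10}$; tabulating the resulting lines alongside their stabilizer algebras yields exactly the canonical cores $\gm_t$, $\wt\gm_t$, $\gm_\pm$, $\gm_{<0}$ and $\gm_{\rm null}$ of Table~\ref{simbacacca}. The ``one and only one'' clause is inherited directly from the corresponding uniqueness statement in Proposition~\ref{secondorbitprop}, since the passage $z\mapsto\bC z\mapsto\gm$ is a bijection between orbits and isomorphism classes.

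The final, geometric assertion I would argue exactly as in Theorem~\ref{firstcores}: each listed core has height $\operatorname{ht}(\gm)=0$, so any strongly regular CR manifold of such a type is $2$-nondegenerate by definition, and because the core $\gm_x$ is a pointwise CR invariant built intrinsically from the Tanaka and Freeman sequences (Lemma~\ref{lemmino}), a local CR diffeomorphism would send cores at corresponding points to isomorphic cores; distinct entries of Table~\ref{simbacacca} being non-isomorphic, the associated manifolds cannot be locally CR diffeomorphic. All the genuine difficulty has been pushed into Proposition~\ref{secondorbitprop} (postponed to Appendix~\ref{appendice3}). Within the present proof the only point demanding attention is the faithful translation of the light-cone-adapted representatives of that proposition, written through the null combinations $\e_1\pm\e_3$, into the complex-\emph{symplectic} basis $\{e_1,\dots,e_4\}$ fixed in the statement rather than the complex-Witt basis; this is routine but must be carried out with care using \eqref{ortbasis2}, with \eqref{Witt-symplectic} available as a cross-check.
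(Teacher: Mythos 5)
Your proposal is correct and follows essentially the same route as the paper: the paper's proof of Theorem \ref{secondcores} consists precisely of the reduction to $K_\sharp$-orbits on $V^\times$ established in \S\ref{sec:7cores}, an appeal to Proposition \ref{secondorbitprop}, and the same invariance-plus-height argument for the final CR statement, exactly as in Theorem \ref{firstcores}. Your added care about translating the representatives written in the basis $(\e_\alpha)$ into the complex-symplectic basis via \eqref{ortbasis2} is a faithful (if more explicit) rendering of the step the paper leaves implicit in compiling Table \ref{simbacacca}.
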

\begin{proof} 
It uses Proposition \ref{secondorbitprop} and it is completely analogous to that of Theorem \ref{firstcores}. 
\end{proof}
\begin{remark}
We remark that
Proposition \ref{secondorbitprop} and Theorem \ref{secondcores} are not exhaustive of the topological structure of the moduli space
$\mathbb X$ of cores $\gm$ of $\sgn(\mathfrak J)=(1,1)$ up to equivalence (this is due to the fact that $\mathrm{SU}(1,1)$ is not compact).
For instance, one can show  that $\gm_{<0}$ is a point at infinity of the $\wt\gm_t$'s whereas $\gm_{\rm null}$ is arbitrarily close to $\wt\gm_0$;
we will not need this finer structure in this paper. 
\end{remark}
We see from Table \ref{tablecore20} and Table \ref{simbacacca} that there are precisely seven different classes of abstract cores $\gm=\bigoplus\gm^p$ with $\dim(\gm)=7$ and $\operatorname{ht}(\gm)=0$ which are admissible (that is, $\gn_\sharp$ properly contains $\bC$). The next section deals with the corresponding models.
\par\noindent
{\small
\begin{table}[H]
\begin{centering}
\begin{tabular}{|c|c|c|}
\hline
$\begin{gathered}\\ \gm_t=\bigoplus\gm_t^{p}\\ t\in[-1,1]\end{gathered}$ & $\gm_t^{0(10)}{}^{\phantom{C^C}}$ & $\gn_\sharp^{\phantom{C^{C^C}}}$  \\
\hline
&&\\[3mm]
$\begin{gathered} t=\pm 1\\ \\ \end{gathered}$ & $$
& $\begin{gathered} \bC\oplus\so_2(\bR)\\ \\ \end{gathered}$
\\
\cline{1-1}\cline{3-3}
&&\\[3mm]
$\begin{gathered} t=0\\ \\ \end{gathered}$ & $\begin{gathered}(1+t)e_1^{10}\odot e_1^{10}+(t-1)e_2^{10}\odot e_2^{10}\end{gathered}$ & $\begin{gathered}\bC\oplus\so(1,1)\\ \\ \end{gathered}$  
\\
\cline{1-1}\cline{3-3}
&&\\[3mm]
$\begin{gathered} t\neq \pm 1,0\\ \\ \end{gathered}$ &  & $\begin{gathered}\bC\\ \\ \end{gathered}$  
\\
\hline
\hline
$\begin{gathered}\\ \wt\gm_t=\bigoplus\wt\gm_t^{p}\\ t\in\bR\end{gathered}$ & $\wt\gm_t^{0(10)}{}^{\phantom{C^C}}$ & $\gn_\sharp^{\phantom{C^{C^C}}}$  \\
\hline
&&\\[-1mm]
$t\in\bR$ & $\begin{gathered} \\ e_1^{10}\odot e_1^{10}-e_2^{10}\odot e_2^{10}+2(t-1)e_1^{10}\odot e_2^{10}\\
+i((1+t)e_1^{10}\odot e_1^{10}-(1+t)e_2^{10}\odot e_2^{10}-2e_1^{10}\odot e_2^{10})\\ \\
\end{gathered}$
& $\bC$
\\
\hline
\hline
$\begin{gathered}\\ \gm_\pm=\bigoplus\gm_\pm^{p}\\ \\ \end{gathered}$ & $\gm_\pm^{0(10)}{}^{\phantom{C^C}}$ & $\gn_\sharp^{\phantom{C^{C^C}}}$  \\
\hline
&&\\[-1mm]
$t=\pm 1$ & $\begin{gathered}\\ (1+t)e_1^{10}\odot e_1^{10}+(-1+t)e_2^{10}\odot e_2^{10}-2i e_1^{10}\odot e_2^{10}\\ \\ \end{gathered}$
& $\bC$
\\
\hline
\hline
$\begin{gathered}\\ \gm_{<0}=\bigoplus\gm_{<0}^{p}\\ \\ \end{gathered}$ & $\gm_{<0}^{0(10)}{}^{\phantom{C^C}}$ & $\gn_\sharp^{\phantom{C^{C^C}}}$  \\
\hline
&&\\[-1mm]
& $\begin{gathered}\\ e_1^{10}\odot e_2^{10}\\ \\ \end{gathered}$
& $\bC\oplus\so_2(\bR)$
\\
\hline
\hline
$\begin{gathered}\\ \gm_{\rm null}=\bigoplus\gm_{\rm null}^{p}\\ \\ \end{gathered}$ & $\gm_{\rm null}^{0(10)}{}^{\phantom{C^C}}$ & $\gn_\sharp^{\phantom{C^{C^C}}}$  \\
\hline
&&\\[-1mm]
& $\begin{gathered}\\ e_1^{10}\odot e_1^{10}-e_2^{10}\odot e_2^{10}-2i e_1^{10}\odot e_2^{10}\\ \\ \end{gathered}$
& $\bC\oplus(\bR\inplus\bR)$
\\
\hline
\end{tabular}
\caption{The $7$-dimensional cores of signature $(1,1)$ up to isomorphism.}
\label{simbacacca}
\end{centering}
\end{table}}
\medskip\par
\section{Models for \texorpdfstring{$7$}{7}-dimensional \texorpdfstring{$2$}{2}-nondegenerate CR manifolds}
\label{exexamples}
\setcounter{equation}{0}
\setcounter{section}{5}
\smallskip\par
The main aim of this section is to prove the following.
\begin{theorem}
\label{mainI}
For each of the seven admissible cores $\gm=\bigoplus\gm^p$ of $\dim(\gm)=7$, $\operatorname{ht}(\gm)=0$ there exists an associated model $\gg=\bigoplus\gg^p$ of type $\gm$ and 
a $7$-dimensional $2$-nondegenerate homogeneous CR manifold $M=G/G_o$ which is globally defined.
Homogeneous CR manifolds associated with different $\gm$ are not, even locally, CR diffeomorphic one to the other.
\end{theorem}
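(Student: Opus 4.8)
The plan is to derive Theorem \ref{mainI} from Theorem \ref{primoteorema} together with the explicit constructions of the seven models, which I would carry out as Theorem \ref{thmsimple} and Theorem \ref{mainII}. Once a model $\gg=\bigoplus\gg^p\subset\gc$ of type $\gm$ in the sense of Definition \ref{ukip} is in hand, Theorem \ref{primoteorema} does the rest: the pair $(\gg,\gq)$ with $\gq=\wh\gg\cap\gu$ is automatically a finite-dimensional CR algebra whose germ of homogeneous space $M=G/G_o$ is of hypersurface type, strongly regular of type $\gm$, and $2$-nondegenerate (the latter because $\operatorname{ht}(\gm)=0$ forces $k=\operatorname{ht}(\gm)+2=2$). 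Thus the existence part reduces entirely to producing one model for each of the seven admissible cores; global existence of $M$ and pairwise non-equivalence I would settle separately at the end.

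Since every admissible core has height $0$, it is determined by a complex line $\gm^{0(10)}=\bC z\subset\mathfrak{M}^{0(10)}=S^{2,0}$, with $z$ one of the canonical representatives of Table \ref{tablecore20} and Table \ref{simbacacca}. A model must satisfy $\gg^p=\gc^p$ for $p<0$, contain $E$ in $\gg^0$, and obey the projection conditions of Definition \ref{ukip}(iv); for these $2$-nondegenerate cores I expect every model to enjoy property $(\mathfrak J)$, so that $\gm\subset\gg$ and condition (iv) in degree $0$ reduces to requiring that the $S^{2,0}$-component of $\wh\gg^0\cap\gu^0$ be exactly $\bC z$. I would split the seven cores into two families. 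For three of them I would realize $\gg$ as a contact-type grading $\gg=\gg^{-2}\oplus\cdots\oplus\gg^2$ of a simple real form of $\mathfrak{sl}_4(\bC)$, namely $\sl_4(\bR)$, $\su(1,3)$ and $\su(2,2)$, identifying $\gg^{-2}\oplus\gg^{-1}$ with the Heisenberg part $\gc^{-2}\oplus\gc^{-1}$ and placing $\gg^1\oplus\gg^2$ inside the isotropy $\gg_o$; here checking Definition \ref{ukip} amounts to computing the $\ad(\mathfrak J)$-weights of $\wh\gg^0\cap\gu^0$ via Proposition \ref{noioso} and the decomposition \eqref{cenaboh}, and reading off the projection to $S^{2,0}$ (this is Theorem \ref{thmsimple}).

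For the remaining four cores I would instead take the short form $\gg=\gg^{-2}\oplus\gg^{-1}\oplus\gg^0$ and build $\gg^0$ from the grading element $E$, the real core direction $\gm^0=\Re(\gm^{0(10)}\oplus\overline{\gm^{0(10)}})$, and the image in $\mathfrak{u}(r,s)=\Re(S^{1,1})$ of the stabilizer $\gn_\sharp$ of $[z]$. The only nontrivial requirement is $[\gg^0,\gg^0]\subset\gg^0$, and the key point is that admissibility $\gn_\sharp\supsetneq\bC$ furnishes, beyond the scaling directions, extra infinitesimal symmetries preserving the line $\bC z$; these are precisely what forces the degree-zero bracket $[\,\gm^{0(10)},\overline{\gm^{0(10)}}\,]\subset S^{1,1}$ to stay inside $\gg^0$, so that $\gg^0$ closes up as a subalgebra (this is Theorem \ref{mainII}). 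In both families the finiteness of $\gg$ and the identification of its core with $\gm$ are then automatic from Theorem \ref{primoteorema}.

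It remains to settle global existence and inequivalence. For each of the seven models I would exhibit $G$ concretely --- the simple groups $\SL_4(\bR)$, $\SU(1,3)$, $\SU(2,2)$ in the first family and the corresponding connected group in the short cases --- and check directly that the isotropy $G_o$ with $Lie(G_o)=\gg_o$ is closed, so that $M=G/G_o$ is a genuine, globally defined manifold. Non-equivalence is then immediate from the invariance of the core: each $M$ is strongly regular of type $\gm$ by Theorem \ref{primoteorema}, so a local CR diffeomorphism between manifolds of types $\gm$ and $\gm'$ would induce an isomorphism $\gm\simeq\gm'$, which is excluded since the seven admissible cores are pairwise non-isomorphic by Theorem \ref{firstcores} and Theorem \ref{secondcores} (they have either distinct signatures or lie on distinct $K_\sharp$-orbits). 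I expect the main obstacle to be the construction of the models themselves: one must satisfy conditions (iii)--(iv) of Definition \ref{ukip} simultaneously with the Lie-subalgebra property, which for the short models is the closure of $\gg^0$ controlled by admissibility, and for the simple models is the delicate matching of the intrinsic $\ad(\mathfrak J)$-eigenspace data of a suitably chosen real form and complex structure $\mathfrak J$ to the prescribed canonical line $\bC z$.
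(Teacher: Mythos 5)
Your proposal matches the paper's proof: Theorem \ref{mainI} is obtained there exactly as you describe, by combining Theorem \ref{thmsimple} (the three simple models $\sl_4(\bR)$, $\su(1,3)$, $\su(2,2)$ with the contact-type grading) and Theorem \ref{mainII} (the four short models $\gg=\gg^{-2}\oplus\gg^{-1}\oplus\gg^0$ with $\gg^0=\gn_\sharp\oplus\Re(\gm^{0(10)}\oplus\overline{\gm^{0(10)}})$, closed under brackets by the explicit computation of $[\gm^{0(10)},\overline{\gm^{0(10)}}]\subset\wh\gn_\sharp$), with strong regularity and $2$-nondegeneracy supplied by Theorem \ref{primoteorema}, non-equivalence by the invariance of the core via Theorems \ref{firstcores} and \ref{secondcores}, and global existence by closedness of $G_o$ (the paper settles the simple cases by noting $(\gg,\gq)$ is a parabolic CR algebra in the sense of \cite{MeNa3}, a minor variant of your direct check). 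All seven models constructed do satisfy property $(\mathfrak J)$, as you anticipated, so your reduction of condition (iv) of Definition \ref{ukip} in degree $0$ is consistent with the paper.
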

To prove it, we first need to briefly recall the description of the $\bZ$-gradings of the semisimple  Lie algebras (see e.g. \cite{GOV, Y, D}).

Let $\gg$ be a complex semisimple Lie algebra. Fix a Cartan subalgebra $\mathfrak{h}\subset\gg$ and denote by $\Delta=\Delta(\gg,\mathfrak{h})$ the root system and by 
\begin{equation*}
\gg^\a=\left\{X\in\gg\;|\;[H,X]=\a(H)X\;\;\text{for all}\;\; H\in \gh\phantom{C^{C^C}}\!\!\!\!\!\!\!\!\!\!\!\right\}
\end{equation*}
the associated root space of $\a\in\Delta$.
Let
$\mathfrak{h}_{\bR}\subset\mathfrak{h}$ be the real subspace where all the roots are real valued; 
any element $\lambda\in(\mathfrak{h}_\bR^*)^*\simeq\mathfrak{h}_\bR$ with $\lambda(\alpha)\in\mathbb{Z}$ for all $\alpha\in\Delta$ defines a grading $\displaystyle\gg=\bigoplus\gg^p$ on $\gg$ by setting:
\begin{align*}
\begin{cases}
\gg^0=\mathfrak{h}\oplus
\displaystyle\sum_{\substack{\alpha\in\Delta\\\lambda(\alpha)=0}}\gg^\alpha,\\
\gg^p=\displaystyle\sum_{\substack{\alpha\in\Delta\\\lambda(\alpha)=p}}\gg^\alpha, &\text{for all}\,\; p\in\bZ^\times,
\end{cases}
\end{align*}
and all possible gradings of $\gg$ are of this form, for some choice of $\mathfrak{h}$ and $\lambda$. We will refer to $\lambda(\alpha)$ as the \emph{degree} of the root $\alpha$.

There exists a set of positive roots $\Delta^+\subset\Delta$ such that $\lambda$ is dominant, i.e., $\lambda(\alpha)\geq0$ for all $\alpha\in\Delta^+$. 
The depth of $\gg$ is the degree of the maximal root and is also equal to the height of $\gg$.
Let $\Pi$ be the set of positive simple roots, which we identify with the nodes of the Dynkin diagram. A grading is fundamental if and only if $\lambda(\alpha)\in\{0,1\}$ for all $\alpha\in\Pi$. We denote a fundamental grading of a Lie algebra $\gg$ by marking with a cross the nodes of the Dynkin diagram of $\gg$ corresponding to simple roots $\alpha$ with $\lambda(\alpha)=1$.

The Lie subalgebra $\gg^0$ is reductive; the Dynkin diagram of its semisimple ideal is obtained from the Dynkin diagram of $\gg$ by removing all crossed nodes, and any line issuing from them.

A routine examination of the Dynkin diagrams of complex Lie algebras together with e.g. \cite[Prop. 3.2.4]{CS} implies that Table \ref{tabella} below lists all the complex graded semisimple Lie algebras $\displaystyle\gg=\bigoplus\gg^p$ which satisfy:
\begin{enumerate}
\item[(i)] the depth $\operatorname{d}(\gg)=2$,
\item[(ii)]
$\dim \gg^{-2}=1$,
\item[(iii)] 
$\dim \gg^{-1}=4$;
\item[(iv)]
$\gg_-$ is nondegenerate. 
\end{enumerate}
Any such $\gg$ is a simple graded subalgebra of the complex contact algebra $\widehat\gc$ of degree $n=2$.
\par\noindent
\begin{centering}
\begin{table}[H]
\begin{tabular}{|c|c|c|c|c|}
\hline
\hline
$\gg$ & \text{Grading}  & $\gg^{-2^{\phantom{C^{C^C}}}}\!\!\!\!\!\!\!\!\!\!$ & $\gg^{-1}$ & $\gg^0$\\
\hline
\hline
&&&&\\[-3mm]
$A_{3}$&
\begin{tikzpicture}
\node[xroot] (1) {}; 
\node[root]   (2) [right=of 1] {} edge [-] (1);
\node[xroot]   (3) [right=of 2] {} edge [-] (2);
\end{tikzpicture}&
$\bC$ & $\bC^2\oplus(\bC^2)^*$ & $\ggl_2(\bC)\oplus\bC$
\\
\hline
&&&&\\[-3mm]


$C_{3}$&
\begin{tikzpicture}
\node[xroot]   (1)                     {};
\node[root] (2) [right=of 1] {} edge [-] (1);
\node[root]   (3) [right=of 2] {} edge [doublearrow] (2);
\end{tikzpicture}&
$\bC$ & $\bC^4$ & $\mathfrak{sp}_4(\bC)\oplus\bC$
\\
\hline
&&&&\\[-3mm]
$G_{2}$&
\begin{tikzpicture}
\node[root]   (1)                     {};
\node[xroot] (2) [right=of 1] {} edge [doublearrow] (1) edge [-] (1);
\end{tikzpicture}&
$\bC$ & $S^3\bC^2$ & $\mathfrak{sl}_2(\bC)\oplus \bC$
\\
\hline

\hline 
\hline
\end{tabular}
\caption{Some $\bZ$-gradings $\displaystyle \gg=\bigoplus\gg^p$ of complex simple Lie algebras with the associated representation of $\gg^0$ on $\displaystyle \gg_-$.}
\label{tabella}
\end{table}
\end{centering}
\par\noindent
We now recall the description of simple real graded Lie algebras. Let $\gg$ be a real simple Lie algebra. Fix a Cartan decomposition $\gg=\mathfrak{k}\oplus\mathfrak{p}$, a maximal abelian subspace $\gh_{\circ}\subset\mathfrak p$ and a maximal torus $\gh_\bullet$ in the centralizer of $\gh_\circ$ in $\mathfrak k$. Then $\gh=\gh_\bullet\oplus\gh_\circ$ is a maximally noncompact Cartan subalgebra of $\gg$.

Denote by $\Delta=\Delta(\wh\gg,\wh\gh)$ the root system of $\wh\gg$ and by $\gh_{\bR}=\mathrm{i}\gh_\bullet\oplus\gh_\circ\subset\wh\gh$ the real subspace where all the roots have real values. 
Conjugation $\sigma:\wh\gg\longrightarrow\wh\gg$ of $\wh\gg$ with respect to the real form $\gg$ leaves $\wh\gh$ invariant and induces an 
involution $\alpha\mapsto\bar\alpha$ on $\gh_\bR^*$, transforming roots into roots.
We say that a root $\alpha$ is compact if $\bar\alpha=-\alpha$ and denote by $\Delta_\bullet$ the set of compact roots.
There exists a set of positive roots $\Delta^+\subset\Delta$, with corresponding system of simple roots $\Pi$, and an involutive automorphism $\varepsilon\colon\Pi\to\Pi$ of the Dynkin diagram of $\wh\gg$ such that $\epsilon(\Pi\setminus\Delta_\bullet)\subseteq \Pi\setminus\Delta_\bullet$ and
\begin{align*}
\bar\alpha&=-\alpha\;\;\; \text{ for all }\alpha\in\Pi\cap\Delta_\bullet \ ,\\
\bar\alpha&=\varepsilon(\alpha)+\!\sum_{\beta\in\Pi\cap\Delta_\bullet}b_{\alpha,\beta}\beta\;\;\;\text{ for all }\alpha\in\Pi\setminus\Delta_\bullet\ .
\end{align*}
The Satake diagram of $\gg$ is the Dynkin diagram of $\wh\gg$ with the following additional information:
\begin{enumerate}
\item
nodes in $\Pi\cap\Delta_\bullet$ are painted black;
\item
if $\alpha\in\Pi\setminus\Delta_\bullet$ and $\varepsilon(\alpha)\neq\alpha$ then $\alpha$ and $\varepsilon(\alpha)$ are joined by a curved arrow.
\end{enumerate}
A list of Satake diagrams can be found in e.g. \cite{CS, GOV, O}.

Let $\lambda\in(\gh_\bR^*)^*\simeq\gh_\bR$ be 
an element such that the induced grading on $\wh\gg$ is fundamental. Then the grading on $\wh\gg$ induces a grading on $\gg$ if and only if $\bar\lambda=\lambda$, or equivalently the following two conditions on the set 
$$\Phi=\left\{\alpha\in\Pi\mid\lambda(\alpha)=1\phantom{C^{C^C}}\!\!\!\!\!\!\!\!\!\!\!\!\right\}$$ are satisfied:
\begin{enumerate}
\item
$\Phi\cap\Delta_\bullet=\emptyset$;
\item
if $\alpha\in\Phi$ then $\varepsilon(\alpha)\in\Phi$.
\end{enumerate}
We indicate the grading on a real Lie algebra by crossing all nodes in $\Phi$. In the real case too the Lie subalgebra $\gg^0$ is reductive and the Satake diagram of its semisimple ideal is obtained from the Satake diagram of $\gg$. 

Table \ref{tabellona2} lists all the real graded simple Lie algebras
$\displaystyle\gg=\bigoplus\gg^p$  such that the induced grading on $\wh\gg$ is as in Table \ref{tabella}. 
\par\noindent
\begin{centering}
\begin{table}[H]
\begin{tabular}{|c|c|c|c|c|c|}
\hline
\hline
$\gg$ &  \text{Grading} & $\gg^{-2^{\phantom{C^{C^C}}}}\!\!\!\!\!\!\!\!\!\!$ & $\gg^{-1}$ & $\gg^0$\\
\hline
\hline
&&&&\\[-1mm]

$\sl_4(\bR)_{\phantom{C_{C_C}}}\!\!\!\!\!\!\!\!\!\!$&
\begin{tikzpicture}
\node[xroot]   (1)                     {};
\node[root] (2) [right=of 1] {} edge [-] (1);
\node[xroot]   (3) [right=of 2] {} edge [-] (2);
\end{tikzpicture}&
$\bR$ & $\bR^2\oplus(\bR^2)^*$ & $\ggl_2(\bR)\oplus\bR$
\\
\hline
&&&&\\[-1mm]

$\su(2,2)_{\phantom{C_{C_C}}}\!\!\!\!\!\!\!\!\!\!$&
\begin{tikzpicture}
\node[xroot]   (1)                     {}; 
\node[root] (2) [right=of 1] {} edge [-] (1);
\node[xroot]   (3) [right=of 2] {} edge [-] (2) edge [<->,out=145, in=35] (1);
\end{tikzpicture}&
$\bR$ & $\bR^2\oplus(\bR^2)^*$ & $\ggl_2(\bR)\oplus\bR$
\\
\hline

&&&&\\[-1mm]

$\su(1,3)_{\phantom{C_{C_C}}}\!\!\!\!\!\!\!\!\!\!$&
\begin{tikzpicture}
\node[xroot]   (1)                     {}; 
\node[broot] (2) [right=of 1] {} edge [-] (1);
\node[xroot]   (3) [right=of 2] {} edge [-] (2) edge [<->,out=145, in=35] (1);
\end{tikzpicture}&
$\bR$ & $\bR^2\oplus(\bR^2)^*$ & $\mathfrak{u}(2)\oplus\bR$
\\
\hline
&&&&\\[-1mm]

$\sp_{6}(\bR)_{\phantom{C_{C_C}}}\!\!\!\!\!\!\!\!\!\!$&
\begin{tikzpicture}
\node[xroot]   (1)                     {};
\node[root] (2) [right=of 1] {} edge [-] (1);
\node[root]   (3) [right=of 2] {} edge [doublearrow] (2);
\end{tikzpicture}&
$\bR$ & $\bR^4$ & $\sp_4(\bR)\oplus\bR$
\\
\hline

&&&&\\[-1mm]
$\begin{gathered} G_{2} \\ \text{split} \end{gathered}$&
\begin{tikzpicture}
\node[root]   (1)                     {};
\node[xroot] (2) [right=of 1] {} edge [doublearrow] (1) edge [-] (1);
\end{tikzpicture}&
$\bR$ & $S^3\bR^2$ & $\sl_2(\bR)\oplus\bR$
\\
\hline
\hline
\end{tabular}
\caption{Some $\bZ$-gradings $\displaystyle \gg=\bigoplus\gg^p$ of real simple Lie algebras with the associated representation of $\gg^0$ on $\displaystyle \gg_-$.}
\label{tabellona2}
\end{table}
\end{centering}
\par\noindent
Let now $\gg=\bigoplus\gg^p$ be a model of type $\gm$ with $\dim(\gm)=7$ and $\operatorname{ht}(\gm)=0$ that in addition is a simple Lie algebra satisfying property $(\mathfrak J)$ (i.e., $\mathfrak J\in\gg^0$). By the discussion above the grading $\gg=\bigoplus\gg^p$ is necessarily as in Table \ref{tabellona2}. It is convenient to introduce a Cartan subalgebra which, in general, is not maximally noncompact. 
\begin{definition}
A Cartan subalgebra $\gh$ of $\gg$ is called {\it adapted}
if it contains the grading element $E$ of $\gg=\bigoplus\gg^p$ and $\mathfrak{J}$.
\end{definition}
Adapted Cartan subalgebras always exist since $E$ and $\mathfrak{J}$ are semisimple elements and $[E,\mathfrak J]=0$. Moreover any adapted Cartan subalgebra $\gh$ satisfies $\gh\subset\gg^0$ and decomposes into $\gh=\gh_{\bullet}\oplus\gh_{\circ}$ where
\begin{align*}
\gh_\bullet&=\left\{H\in\gh\;|\;\text{all eigenvalues of}\;\ad H\;\text{are purely imaginary}\phantom{C^{C^C}}\!\!\!\!\!\!\!\!\!\!\!\right\}\;,\\
\gh_\circ&=\left\{H\in\gh\;|\;\text{all eigenvalues of}\;\ad H\;\text{are real}\phantom{C^{C^C}}\!\!\!\!\!\!\!\!\!\!\!\right\}\;.
\end{align*}
Clearly $\mathfrak J\in\gh_\bullet$ and $E\in\gh_\circ$. 
\emph{From now on $\gh$ is an adapted Cartan subalgebra.}

We fix a Hermitian form $h(t, s) = \overline{t}^t\cI\,s$ on $\bC^4$ 
and identify the associated special unitary Lie algebra $\gg$ with the Lie algebra of trace-free complex matrices satisfying $\overline{A}^t \cI + \cI A = 0$;
we follow the conventions of \cite{Su}:
\vskip-0.014cm\par\noindent
\begin{align*}
(i)\;\; \cI&= \left(
\begin{array}{c|ccc}1&0&0&0\\
\hline
0&-1&0&0\\
0&0&-1&0\\
0&0&0&-1\\
\end{array}
\right)\qquad\text{and}\\
\\
\gg&=\su(1,3)=\left\{\left(
\begin{array}{c|c} -\tr D & B\\
\hline
\overline{B}^{t}& D\\
\end{array}
\right)\;|\; B\in\operatorname {Mat}(1,3;\bC)\;\text{and}\;D\in\mathfrak{u}(3)\!\!\!\!\!\!\!\!\!\!\!\!\phantom{C^{C^C}}\right\}\;;\\
\\
(ii)\;\; 
\cI&= \left(
\begin{array}{cc|cc}1&0&0&0\\
0&1&0&0\\
\hline
0&0&-1&0\\
0&0&0&-1\\
\end{array}
\right)
\qquad\text{and}\\
\\
\gg&=\su(2,2)=\left\{
\left(
\begin{array}{c|c} A & B\\
\hline
\overline{B}^{t}& D\\
\end{array}
\right)
\;|\; B\in\operatorname {Mat}(2,2;\bC),\,A=\begin{pmatrix} u_1 & a \\ -\overline{a} & u_2 \end{pmatrix},\!\!\!\!\!\!\!\!\!\!\!\!\phantom{C^{C^C}}\right.
\\
&\phantom{ccccccccccccccccccc}
\left.
D=\begin{pmatrix} u_3 & d \\ -\overline{d} & u_4 \end{pmatrix}\;\text{where}\;u_i\in i\bR,\,\sum u_i=0 
\!\!\!\!\!\!\!\!\!\!\!\!\phantom{C^{C^C}}\right\}\;.
\end{align*}
%
%
\begin{theorem}
\label{thmsimple}
Let $\gg$ be a model of type $\gm$, $\dim(\gm)=7$ and $\operatorname{ht}(\gm)=0$, which in addition is a semisimple real Lie algebra and satisfies property $(\mathfrak J)$. Then:
\begin{itemize}
\item[(i)] $\gg$ is simple and isomorphic to $\sl_4(\bR)$, $\su(1,3)$ or $\su(2,2)$ with the $\bZ$-grading $\gg=\bigoplus\gg^p$ given in Table \ref{tabellona2};
\item[(ii)] the complex structure $\mathfrak J$ is described in Table \ref{tabellona3} below;
\item[(iii)] there exists  always  an associated $7$-dimensional and $2$-nondegenerate homogeneous CR manifold $M=G/G_o$ which is globally defined (i.e., $G_o$ is closed in $G$). It is of type $\gm$ where the signature $\sgn(\mathfrak{J})$ of $\mathfrak J$ and the equivalence class $[\gm^{0(10)}]$ of $\gm^{0(10)}$ are (recall Tables \ref{tablecore20} and \ref{simbacacca}):
\par\noindent
\vskip-0.2cm
\begin{centering}
\begin{table}[H]
\begin{tabular}{|c|c|c|c|}
\hline
\hline
$\gg^{\phantom{C^{C^C}}}\!\!\!\!\!\!\!\!\!\!$ &  $\operatorname{sgn}(\mathfrak J)$ & $[\gm^{0(10)}]$ & $\gn_{\sharp_{\phantom{C_{C_C}}}}^{\phantom{C^{C^C}}}\!\!\!\!\!\!\!\!\!\!$ \\
\hline
\hline
&&&\\[-1mm]
$\sl_4(\bR)$ & $(1,1)_{\phantom{C_{C_C}}}^{\phantom{C^{C^C}}}\!\!\!\!\!\!\!\!\!\!$ & $e_1^{10}\odot e_1^{10}-e_2^{10}\odot e_2^{10}$
&
$\bC\oplus\so(1,1)$
\\
\hline 
&&&\\[-1mm]
$\su(1,3)$
 & $(1,1)_{\phantom{C_{C_C}}}^{\phantom{C^{C^C}}}\!\!\!\!\!\!\!\!\!\!$ & $e_1^{10}\odot e_2^{10}$
&
$\bC\oplus\so_2(\bR)$
\\
\hline 
&&&\\[-1mm]
$\su(2,2)$
 & $(2,0)_{\phantom{C_{C_C}}}^{\phantom{C^{C^C}}}\!\!\!\!\!\!\!\!\!\!$ & $e_1^{10}\odot e_1^{10}+e_2^{10}\odot e_2^{10}$
&
$\bC\oplus\so_2(\bR)$
\\
\hline 
\hline
\end{tabular}
\caption{}
\label{tabellatype}
\end{table}
\end{centering}
\item[(iv)] the adapted Cartan subalgebra $\gh$ of $\gg$ equals $\gn_\sharp$ in all cases (recall Definition \ref{prosciuttocotto});
\item[(v)] the model $\gg$ is maximal;
\item[(vi)] for completeness we also give the terms \eqref{bologna} of the Freeman sequence of the associated CR algebra $(\gg,\gq)$. They are $\gq=\gq_{-1}=\wh{\gg}^{2}+\wh{\gg}^{1}+\wh\gh+\gg^{-\alpha_2}+\gg^{-(\alpha_1+\alpha_2)}+\gg^{-(\alpha_2+\alpha_3)}$ and
$$\gq_{0}=\wh{\gg}^{2}\oplus\wh{\gg}^{1}\oplus\wh\gh\oplus\gg^{-\alpha_2}\;,\qquad\gq_{1}=\wh{\gg}^{2}\oplus\wh{\gg}^{1}\oplus\wh\gh=\gq\cap\overline\gq\;,$$
with $\alpha_i$ the simple root associated to the $i$-th node of the Dynkin diagram of $\sl_4(\bC)$.
\end{itemize} 
\end{theorem}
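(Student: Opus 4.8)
The plan is to settle the classification~(i) first and then treat the three surviving algebras one by one for (ii)--(vi).

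\textbf{Step 1 (classification, (i)).} Since $\gg$ is a model of type $\gm$ one has $\gg_-=\gc_-$, so $\gg$ is a $\bZ$-graded semisimple Lie algebra of depth $2$ with $\dim\gg^{-2}=1$, $\dim\gg^{-1}=4$ and $\gg_-$ nondegenerate. I would first observe that $\gg$ is in fact simple: decomposing $\gg$ into graded simple ideals, only one can meet the one-dimensional $\gg^{-2}$, every other ideal has trivial negative part by nondegeneracy of $B$, and since $\gg^0\subseteq\gc^0$ acts faithfully on $\gc^{-1}$ no nonzero ideal can sit in degree $0$. Hence $\gg$ is one of the five algebras of Table~\ref{tabellona2}. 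To exclude $\sp_6(\bR)$ I would note that there $\gg^0=\sp_4(\bR)\oplus\bR E=\gc^0$, so by \eqref{subsub} the projection $\pi_{\mathfrak M^{0(10)}}(\wh\gg^0\cap\gu^0)$ is all of $\mathfrak M^{0(10)}\simeq S^2\bC^2$; thus $\dim_\bC\gm^{0(10)}=3$ and $\dim\gm\neq7$, against the hypotheses. To exclude split $G_2$ I would invoke property $(\mathfrak J)$: there $\gg^0=\sl_2(\bR)\oplus\bR E$ acts on $\gg^{-1}\simeq S^3\bR^2$, the eigenvalues of any $X\in\sl_2(\bR)$ on $S^3\bR^2$ are $\{\pm\lambda,\pm3\lambda\}$, and (recalling that $E$ acts as $-\Id$ on $\gg^{-1}$) no element $X+cE$ can satisfy $(X+cE)^2=-\Id$. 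This leaves exactly $\sl_4(\bR)$, $\su(2,2)$ and $\su(1,3)$.

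\textbf{Step 2 (the complex structure, the core type and the Cartan, (ii)--(iv)).} For each surviving algebra I would pass to the explicit matrix model (the conventions fixed above for $\su(1,3)$ and $\su(2,2)$, the obvious one for $\sl_4(\bR)$), write $\gg^0=\ggl_2(\bR)\oplus\bR E$ (resp. $\mathfrak u(2)\oplus\bR E$), and determine $\mathfrak J$ by solving $(X+cE)^2=-\Id$ on $\gg^{-1}$; since $E$ acts as $-\Id$ this forces $c=0$ and pins $\mathfrak J$ down to a fixed element of the semisimple part, giving Table~\ref{tabellona3} and the signatures of Table~\ref{tabellatype}. To read off $[\gm^{0(10)}]$ I would decompose $\wh\gg^0$ into $\ad(\mathfrak J)$-eigenspaces: the eigenspace of maximal eigenvalue is one-dimensional and $\pi_{S^{2,0}}$ maps it isomorphically onto the line $\gm^{0(10)}\subset\mathfrak M^{0(10)}$; expressing this line in the coordinates $e_i^{10}\odot e_j^{10}$ on $S^2\bC^2$ and applying Proposition~\ref{firstorbitprop} or Proposition~\ref{secondorbitprop} identifies the $K_\sharp$-orbit, which yields the three rows of Table~\ref{tabellatype}. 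The same decomposition shows $\gm^{1(10)}=\pi_{\mathfrak M^{1(10)}}(\wh\gg^1\cap\gu^1)=0$, so $\operatorname{ht}(\gm)=0$ and $M$ is $2$-nondegenerate. For (iv), the adapted Cartan $\gh\subset\gg^0$ is the three-dimensional abelian subalgebra spanned by $E$, $\mathfrak J$ and one further semisimple element; since $\gn_\sharp\subset Lie(K_\sharp)=\bR E\oplus\mathfrak u(r,s)$ is the three-dimensional abelian stabilizer of the same line and also contains $E$ and $\mathfrak J$, a direct comparison inside $\gc^0$ gives $\gh=\gn_\sharp$.

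\textbf{Step 3 (global existence, Freeman sequence and maximality, (iii),(vi),(v)).} By Theorem~\ref{primoteorema} the data $\gq=\wh\gg\cap\gu$ and $\gg_o=\gg\cap\gu$ form a CR algebra with core $\gm$, whose germ is $7$-dimensional, of hypersurface type and strongly regular of type $\gm$; this already gives most of (iii). To make $M=G/G_o$ globally defined I would realize it as the $G$-orbit of a base point in the compact complex flag manifold $\wh G/\wh Q$, where $\wh Q\subset\wh G$ is the parabolic normalizing $\gq$; then $G_o=G\cap\wh Q$ is closed as the stabilizer of a point under an algebraic action, and one checks that the orbit is precisely the $7$-dimensional hypersurface carrying the prescribed CR structure. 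Assertion (vi) is immediate from \eqref{chebarba}: substituting the graded pieces $\gq^p=\wh\gg^p\cap\gu^p$ gives the stated $\gq_0$ and $\gq_1=\gq\cap\overline\gq$. For maximality (v), any model $\gg'\supseteq\gg$ is again finite-dimensional by Theorem~\ref{primoteorema}(iii) and $\bZ$-graded with $\gg'_-=\gc_-$; a degree-by-degree comparison of the graded components, using the simplicity of $\gg$ and the bracket relations of $\gc$, then forces $\gg'=\gg$.

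I expect the main obstacles to lie in two places. The first is the explicit matching of $[\gm^{0(10)}]$ with one of the canonical orbits of \S\ref{sec:7cores}: this requires translating the Lie-theoretic grading of each matrix algebra into the symmetric-power coordinates on $S^2\bC^2$ and computing the signature of $\mathfrak J$, where sign and normalization errors are easy to make. The second is that the global existence in (iii) (closedness of $G_o$) and the maximality in (v) do not seem to admit a uniform argument and must be verified algebra by algebra.
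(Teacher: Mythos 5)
Your Step 1 is sound and in places even sharper than the paper: the exclusion of $\sp_6(\bR)$ is the paper's own argument, and your eigenvalue computation ruling out split $G_2$ (the eigenvalues $\pm\lambda,\pm3\lambda$ of $X\in\sl_2(\bR)$ on $S^3\bR^2$ are incompatible with $(X+cE)^2=-\Id$) is a correct and arguably cleaner alternative to the paper's root-space argument that $\gg^{\alpha_1}$ cannot be $\mathfrak J$-stable. Likewise your treatment of (iii) via the $G$-orbit in $\wh G/\wh Q$ is exactly the paper's mechanism, provided you add the one missing verification that $\gq=\wh\gg\cap\gu$ is indeed the $11$-dimensional middle-node parabolic of $\wh\gg$ (the paper computes this explicitly; note this also gives a \emph{uniform} global-existence argument for all three algebras, contrary to your closing worry). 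Items (vi) via \eqref{chebarba} and (iv) via the dimension count $\dim\gh=\dim\gn_\sharp=3$ match the paper.

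The genuine gaps are in (ii) and (v). In Step 2 the claim that solving $(X+cE)^2=-\Id$ on $\gg^{-1}$ ``pins $\mathfrak J$ down to a fixed element'' is false: for each of the three algebras the adapted Cartan subalgebra contains a \emph{second} commuting complex structure, the element $H$ of Table \ref{tabellona3} (e.g.\ for $\su(1,3)$ both $\ad(\mathfrak J)|_{\gg^{-1}}$ and $\ad(H)|_{\gg^{-1}}$ square to $-\Id$ and annihilate $\gg^{-2}$). What distinguishes them is not the quadratic equation but the induced eigenspace decomposition of $\wh\gg^{-1}$: for $H$ one lands in the second case of Table \ref{tabellaposs2}, where $\wh\gg^0=S^{1,1}\oplus\bC E$ has trivial extremal eigenspaces, hence $\gm^{0(10)}=0$ and $\dim(\gm)=5$, violating (iv) of Definition \ref{ukip}. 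You must carry out this exclusion (and, as the paper does via Sugiura's classification \cite{Su}, determine which Cartan subalgebras are adapted at all) before Table \ref{tabellatype} can be read off. In Step 3 the maximality claim is asserted, not proved: even granting (correctly, via Theorem \ref{primoteorema}) that any larger model $\gg'$ is finite-dimensional, a ``degree-by-degree comparison using simplicity'' does not by itself preclude $\gg'^1\supsetneq\gg^1$ or $\gg'^p\neq0$ for $p\geq3$. The paper's proof needs two inputs you omit: first, $\gg'^0=\gg^0$ (from $\dim(\gm)=7$ and $\gn_\sharp\subset\gg^0$); second, an actual rigidity mechanism --- Tanaka's criterion \cite{Ta1} reduces finiteness of the maximal prolongation of $\wh\gg_{\leq0}$ to the classical Cartan prolongation of $\mathfrak k=\ggl_2(\bC)\subset\ggl(\wh\gg^{-1})$, which is trivial because $\mathfrak k$ preserves the nondegenerate symmetric pairing $\beta$ between $\bC^2$ and $(\bC^2)^*$, after which $\wh\gg'=\wh\gg$ follows from \cite[Theorem 3.21]{MeNa1}. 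Without this, or an explicit computation of the spaces $\left\{X\in\gc^p\mid[X,\gc^{-1}]\subset\gg'^{p-1}\right\}$ as in the $3$-nondegenerate case, item (v) remains open.
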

\!\!
\begin{sidewaystable}
\centering
\vskip15cm
\resizebox{\linewidth}{!}{
\begin{tabular}{|c|c|c|c|c|c|c|c|c|c|c|c|}
\hline
$\begin{gathered} \\ \\ \\ \gg \end{gathered}$ & $\begin{gathered}\\ \\ \\ \sgn(\mathfrak J)\end{gathered}$ & $\begin{gathered}\\ \\ \\ E^2 \end{gathered}$ & $E_1^{10}$ & $E_2^{10}$ & $\begin{gathered}\\ \\ \\ E\end{gathered}$ & $\begin{gathered}\\ \\ \\ \mathfrak{J}\end{gathered}$ & $\begin{gathered}\\ \\ \\ H \end{gathered}$ & $e^{0(10)}$& $e_1^{10}$ & $e_2^{10}$ & $\begin{gathered}\\ \\ \\ e^{-2}\end{gathered}$  \\
\cline{4-5}\cline{9-11}
&&&&&&&&&&&\\[-3mm]
  & & & $\begin{gathered} \\ E_{1}^{01}\\ \\ \end{gathered}$ & $\begin{gathered}\\ E_2^{01} \\ \\ \end{gathered}$ & & & & $\begin{gathered}\\ e^{0(01)} \\ \\ \end{gathered}$& $\begin{gathered}\\ e_1^{01} \\ \\ \end{gathered}$ & $\begin{gathered}\\ e_2^{01} \\ \\ \end{gathered}$ & \\
\hline
\hline
&&&&&&&&&&&\\[-3mm]

$\begin{gathered} \\ \\ \\ 
\mathfrak{sl}_4(\bR) 
\end{gathered}$ &
$\begin{gathered} \\ \\ \\ 
(1,1) 
\end{gathered}$ & 
$\begin{gathered}\\ \\ \\ 
4\begin{psmallmatrix} 
0 & 0 & 0 & 0\\
0 & 0 & 0 & 0\\
0 & 0 & 0 & -1\\
0 & 0 & 0 & 0\\
\end{psmallmatrix}
\end{gathered}$ &
$\begin{gathered} 
\begin{psmallmatrix} 
0 & 0 & 0 & i\\
0 & 0 & 0 & 1\\
0 & 0 & 0 & 0\\
0 & 0 & 0 & 0\\
\end{psmallmatrix}
\\ \end{gathered}$ & 
$\begin{gathered} 
\begin{psmallmatrix} 
0 & 0 & 0 & 0\\
0 & 0 & 0 & 0\\
i & 1 & 0 & 0\\
0 & 0 & 0 & 0\\
\end{psmallmatrix}
\\ \end{gathered}$ & 
$\begin{gathered} \\ \\ \\ 
\begin{psmallmatrix} 
0 & 0 & 0 & 0\\
0 & 0 & 0 & 0\\
0 & 0 & 1 & 0\\
0 & 0 & 0 & -1\\
\end{psmallmatrix}
\end{gathered}$ & 
$\begin{gathered} \\ \\ \\ 
\begin{psmallmatrix} 
0 & -1 & 0 & 0\\
1 & 0 & 0 & 0\\
0 & 0 & 0 & 0\\
0 & 0 & 0 & 0\\
\end{psmallmatrix}
\end{gathered}$ & 
$\begin{gathered} \\ \\ \\ 
\frac{1}{2}\begin{psmallmatrix} 
1 & 0 & 0 & 0\\
0 & 1 & 0 & 0\\
0 & 0 & -1 & 0\\
0 & 0 & 0 & -1\\
\end{psmallmatrix}
\end{gathered}$ & 
$\begin{gathered} 
\begin{psmallmatrix} 
1 & -i & 0 & 0\\
-i & -1 & 0 & 0\\
0 & 0 & 0 & 0\\
0 & 0 & 0 & 0\\
\end{psmallmatrix}
\\ \end{gathered}$ & 
$\begin{gathered} 
\begin{psmallmatrix} 
0 & 0 & i & 0\\
0 & 0 & 1 & 0\\
0 & 0 & 0 & 0\\
0 & 0 & 0 & 0\\
\end{psmallmatrix}
\\ \end{gathered}$ & 
$\begin{gathered} 
\begin{psmallmatrix} 
0 & 0 & 0 & 0\\
0 & 0 & 0 & 0\\
0 & 0 & 0 & 0\\
-1 & i & 0 & 0\\
\end{psmallmatrix}
\\ \end{gathered}$ &  
$\begin{gathered} \\ \\ \\ 
4\begin{psmallmatrix} 
0 & 0 & 0 & 0\\
0 & 0 & 0 & 0\\
0 & 0 & 0 & 0\\
0 & 0 & -1 & 0\\
\end{psmallmatrix}
\end{gathered}$
\\
&&&&&&&&&&&\\[-3mm]

& & &
$\begin{gathered} 
\begin{psmallmatrix} 
0 & 0 & 0 & -i\\
0 & 0 & 0 & 1\\
0 & 0 & 0 & 0\\
0 & 0 & 0 & 0\\
\end{psmallmatrix}
\\ \\ \end{gathered}$ & 
$\begin{gathered} 
\begin{psmallmatrix} 
0 & 0 & 0 & 0\\
0 & 0 & 0 & 0\\
-i & 1 & 0 & 0\\
0 & 0 & 0 & 0\\
\end{psmallmatrix}
\\ \\ \end{gathered}$ & & & & 
$\begin{gathered} 
\begin{psmallmatrix} 
1 & i & 0 & 0\\
i & -1 & 0 & 0\\
0 & 0 & 0 & 0\\
0 & 0 & 0 & 0\\
\end{psmallmatrix}
\\ \\ \end{gathered}$ & 
$\begin{gathered} 
\begin{psmallmatrix} 
0 & 0 & -i & 0\\
0 & 0 & 1 & 0\\
0 & 0 & 0 & 0\\
0 & 0 & 0 & 0\\
\end{psmallmatrix}
\\ \\ \end{gathered}$ & 
$\begin{gathered} 
\begin{psmallmatrix} 
0 & 0 & 0 & 0\\
0 & 0 & 0 & 0\\
0 & 0 & 0 & 0\\
-1 & -i & 0 & 0\\
\end{psmallmatrix}
\\ \\ \end{gathered}$ & 
\\
\hline
&&&&&&&&&&&\\[-3mm]

$\begin{gathered} \\ \\ \\ 
\mathfrak{su}(1,3) 
\end{gathered}$ &
$\begin{gathered} \\ \\ \\ 
(1,1) 
\end{gathered}$ & 
$\begin{gathered}\\ \\ \\ 
2\begin{psmallmatrix} 
i & -i & 0 & 0\\
i & -i & 0 & 0\\
0 & 0 & 0 & 0\\
0 & 0 & 0 & 0\\
\end{psmallmatrix}
\end{gathered}$ &
$\begin{gathered} 
\begin{psmallmatrix} 
0 & 0 & 0 & 0\\
0 & 0 & 0 & 0\\
1 & -1 & 0 & 0\\
0 & 0 & 0 & 0\\
\end{psmallmatrix}
\\ \end{gathered}$ & 
$\begin{gathered} 
\begin{psmallmatrix} 
0 & 0 & 0 & 1\\
0 & 0 & 0 & 1\\
0 & 0 & 0 & 0\\
0 & 0 & 0 & 0\\
\end{psmallmatrix}
\\ \end{gathered}$ & 
$\begin{gathered} \\ \\ \\ 
\begin{psmallmatrix} 
0 & 1 & 0 & 0\\
1 & 0 & 0 & 0\\
0 & 0 & 0 & 0\\
0 & 0 & 0 & 0\\
\end{psmallmatrix}
\end{gathered}$ & 
$\begin{gathered} \\ \\ \\ 
\begin{psmallmatrix} 
0 & 0 & 0 & 0\\
0 & 0 & 0 & 0\\
0 & 0 & i & 0\\
0 & 0 & 0 & -i\\
\end{psmallmatrix}
\end{gathered}$ & 
$\begin{gathered} \\ \\ \\ 
\frac{1}{2}\begin{psmallmatrix} 
i & 0 & 0 & 0\\
0 & i & 0 & 0\\
0 & 0 & -i & 0\\
0 & 0 & 0 & -i\\
\end{psmallmatrix}
\end{gathered}$ & 
$\begin{gathered} 
\begin{psmallmatrix} 
0 & 0 & 0 & 0\\
0 & 0 & 0 & 0\\
0 & 0 & 0 & 1\\
0 & 0 & 0 & 0\\
\end{psmallmatrix}
\\ \end{gathered}$ & 
$\begin{gathered} 
\begin{psmallmatrix} 
0 & 0 & 0 & 1\\
0 & 0 & 0 & -1\\
0 & 0 & 0 & 0\\
0 & 0 & 0 & 0\\
\end{psmallmatrix}
\\ \end{gathered}$ & 
$\begin{gathered} 
\begin{psmallmatrix} 
0 & 0 & 0 & 0\\
0 & 0 & 0 & 0\\
1 & 1 & 0 & 0\\
0 & 0 & 0 & 0\\
\end{psmallmatrix}
\\ \end{gathered}$ &  
$\begin{gathered} \\ \\ \\ 
2\begin{psmallmatrix} 
i & i & 0 & 0\\
-i & -i & 0 & 0\\
0 & 0 & 0 & 0\\
0 & 0 & 0 & 0\\
\end{psmallmatrix}
\end{gathered}$
\\
&&&&&&&&&&&\\[-3mm]

& & &
$\begin{gathered} 
\begin{psmallmatrix} 
0 & 0 & 1 & 0\\
0 & 0 & 1 & 0\\
0 & 0 & 0 & 0\\
0 & 0 & 0 & 0\\
\end{psmallmatrix}
\\ \\ \end{gathered}$ & 
$\begin{gathered} 
\begin{psmallmatrix} 
0 & 0 & 0 & 0\\
0 & 0 & 0 & 0\\
0 & 0 & 0 & 0\\
1 & -1 & 0 & 0\\
\end{psmallmatrix}
\\ \\ \end{gathered}$ & & & & 
$\begin{gathered} 
\begin{psmallmatrix} 
0 & 0 & 0 & 0\\
0 & 0 & 0 & 0\\
0 & 0 & 0 & 0\\
0 & 0 & -1 & 0\\
\end{psmallmatrix}
\\ \\ \end{gathered}$ & 
$\begin{gathered} 
\begin{psmallmatrix} 
0 & 0 & 0 & 0\\
0 & 0 & 0 & 0\\
0 & 0 & 0 & 0\\
1 & 1 & 0 & 0\\
\end{psmallmatrix}
\\ \\ \end{gathered}$ & 
$\begin{gathered} 
\begin{psmallmatrix} 
0 & 0 & 1 & 0\\
0 & 0 & -1 & 0\\
0 & 0 & 0 & 0\\
0 & 0 & 0 & 0\\
\end{psmallmatrix}
\\ \\ \end{gathered}$ & 
\\
\hline
&&&&&&&&&&&\\[-3mm]

$\begin{gathered} \\ \\ \\ 
\mathfrak{su}(2,2) 
\end{gathered}$ &
$\begin{gathered} \\ \\ \\ 
(2,0) 
\end{gathered}$ & 
$\begin{gathered}\\ \\ \\ 
2\begin{psmallmatrix} 
i & 0 & -i & 0\\
0 & 0 & 0 & 0\\
i & 0 & -i & 0\\
0 & 0 & 0 & 0\\
\end{psmallmatrix}
\end{gathered}$ &
$\begin{gathered} 
\begin{psmallmatrix} 
0 & 0 & 0 & 0\\
1 & 0 & -1 & 0\\
0 & 0 & 0 & 0\\
0 & 0 & 0 & 0\\
\end{psmallmatrix}
\\ \end{gathered}$ & 
$\begin{gathered} 
\begin{psmallmatrix} 
0 & 0 & 0 & 1\\
0 & 0 & 0 & 0\\
0 & 0 & 0 & 1\\
0 & 0 & 0 & 0\\
\end{psmallmatrix}
\\ \end{gathered}$ & 
$\begin{gathered} \\ \\ \\ 
\begin{psmallmatrix} 
0 & 0 & 1 & 0\\
0 & 0 & 0 & 0\\
1 & 0 & 0 & 0\\
0 & 0 & 0 & 0\\
\end{psmallmatrix}
\end{gathered}$ & 
$\begin{gathered} \\ \\ \\ 
\begin{psmallmatrix} 
0 & 0 & 0 & 0\\
0 & i & 0 & 0\\
0 & 0 & 0 & 0\\
0 & 0 & 0 & -i\\
\end{psmallmatrix}
\end{gathered}$ & 
$\begin{gathered} \\ \\ \\ 
\frac{1}{2}\begin{psmallmatrix} 
i & 0 & 0 & 0\\
0 & -i & 0 & 0\\
0 & 0 & i & 0\\
0 & 0 & 0 & -i\\
\end{psmallmatrix}
\end{gathered}$ & 
$\begin{gathered} 
\begin{psmallmatrix} 
0 & 0 & 0 & 0\\
0 & 0 & 0 & 1\\
0 & 0 & 0 & 0\\
0 & 0 & 0 & 0\\
\end{psmallmatrix}
\\ \end{gathered}$ & 
$\begin{gathered} 
\begin{psmallmatrix} 
0 & 0 & 0 & 1\\
0 & 0 & 0 & 0\\
0 & 0 & 0 & -1\\
0 & 0 & 0 & 0\\
\end{psmallmatrix}
\\ \end{gathered}$ & 
$\begin{gathered} 
\begin{psmallmatrix} 
0 & 0 & 0 & 0\\
1 & 0 & 1 & 0\\
0 & 0 & 0 & 0\\
0 & 0 & 0 & 0\\
\end{psmallmatrix}
\\ \end{gathered}$ &  
$\begin{gathered} \\ \\ \\ 
2\begin{psmallmatrix} 
i & 0 & i & 0\\
0 & 0 & 0 & 0\\
-i & 0 & -i & 0\\
0 & 0 & 0 & 0\\
\end{psmallmatrix}
\end{gathered}$
\\
&&&&&&&&&&&\\[-3mm]

& & &
$\begin{gathered} 
\begin{psmallmatrix} 
0 & -1 & 0 & 0\\
0 & 0 & 0 & 0\\
0 & -1 & 0 & 0\\
0 & 0 & 0 & 0\\
\end{psmallmatrix}
\\ \\ \end{gathered}$ & 
$\begin{gathered} 
\begin{psmallmatrix} 
0 & 0 & 0 & 0\\
0 & 0 & 0 & 0\\
0 & 0 & 0 & 0\\
1 & 0 & -1 & 0\\
\end{psmallmatrix}
\\ \\ \end{gathered}$ & & & & 
$\begin{gathered} 
\begin{psmallmatrix} 
0 & 0 & 0 & 0\\
0 & 0 & 0 & 0\\
0 & 0 & 0 & 0\\
0 & 1 & 0 & 0\\
\end{psmallmatrix}
\\ \\ \end{gathered}$ & 
$\begin{gathered} 
\begin{psmallmatrix} 
0 & 0 & 0 & 0\\
0 & 0 & 0 & 0\\
0 & 0 & 0 & 0\\
1 & 0 & 1 & 0\\
\end{psmallmatrix}
\\ \\ \end{gathered}$ & 
$\begin{gathered} 
\begin{psmallmatrix} 
0 & -1 & 0 & 0\\
0 & 0 & 0 & 0\\
0 & 1 & 0 & 0\\
0 & 0 & 0 & 0\\
\end{psmallmatrix}
\\ \\ \end{gathered}$ & 
\\
\hline
\end{tabular}}
\vskip0.7cm
\caption{The explicit decomposition of $\gg$ in terms of an adapted Cartan subalgebra $\gh=\left\{E,\mathfrak{J},H\right\}$, a basis $\left\{e_1,\ldots,e_4\right\}$ of $\gg^{-1}$ which is complex-symplectic for $\gg=\su(1,3)$, $\su(2,2)$ and complex-Witt for $\gg=\sl_4(\bR)$, 
and, finally, a generator $e^{0(10)}$ of $\gm^{0(10)}$ and a basis of elements ``$E$'s'' of the positive part $\gg^{2}\bigoplus\gg^1$ of $\gg$.}
\label{tabellona3}
\end{sidewaystable}
\begin{proof}
We first show by contradiction that $\gg$ is not $\sp_6(\bR)$ or $G_2$ split.

If $\gg=\sp_6(\bR)$ then $\gg^0=\gc^0\supset\Re(S^{2,0}\oplus S^{0,2})$ and $\dim(\gm)\geq 11$, a contradiction. Let $\gg$ be $G_2$ split and $\gh=\gh_\bullet\oplus\gh_\circ$ an adapted Cartan subalgebra of $\gg$ with associated root system $\Delta=\Delta(\wh\gg,\wh\gh)$;
each root space $\gg^\a\subset\wh\gg$ is in particular  $\mathfrak J$-stable.
Let also $\Delta^+\subset \Delta$ be a set of positive roots such that $\l$ is dominant and $\Pi=\left\{\a_1,\a_2\right\}$ the associated system of simple roots satisfying $\l(\a_1)=0$ and $\l(\a_2)=1$ (recall Table \ref{tabella}). 

The roots $\a\in\Delta$ with $\l(\a)=-1$ are $-\a_2,-\a_1-\a_2$, $-3\a_1-\a_2$, $-2\a_1-\a_2$
and the unique root with $\l(\a)=-2$ is $\a=-3\a_1-2\a_2$. By nondegeneracy of $\wh\gg_-$, two possibilities may occur in $\wh\gg^{-1}=\gg^{-1(10)}\oplus\gg^{-1(01)}$: 
\par\noindent
\begin{centering}
\begin{table}[H]
\begin{tabular}{|c|c|c|}
\hline
\hline
$\gg^{-1(10)^{\phantom{C^{C^C}}}}\!\!\!\!\!\!\!\!\!\!$  & $\gg^{-1(01)^{\phantom{C^{C^C}}}}\!\!\!\!\!\!\!\!\!\!$ & $\wh\gg^{0}$ \\
\hline
\hline
&&\\[-3mm]
$\begin{gathered}\\ \gg^{-\a_2}\oplus\gg^{-(\a_1+\a_2)}\\ \\ \end{gathered}$&
$\gg^{-(3\a_1+\a_2)}\oplus\gg^{-(2\a_1+\a_2)}$ & $\wh\gh\oplus\gg^{\a_1}\oplus\gg^{-\a_1}$
\\
\hline
&&\\[-3mm]
$\begin{gathered}\\ \gg^{-\a_2}\oplus\gg^{-(2\a_1+\a_2)}\\ \\  \end{gathered}$
&
$\gg^{-(3\a_1+\a_2)}\oplus\gg^{-(\a_1+\a_2)}$ & $\wh\gh\oplus\gg^{\a_1}\oplus\gg^{-\a_1}$
\\
\hline 
\hline
\end{tabular}
\caption{}
\label{tabellaposs}
\end{table}
\end{centering}
\par\noindent
A direct inspection of the adjoint action of $\gg^{\a_1}$ on $\gg^{-1(10)}$ and $\gg^{-1(01)}$ yields  that
$\gg^{\a_1}$ is not $\mathfrak J$-stable in both cases, a contradiction.
\smallskip\par
Let now $\gg$ be $\sl_4(\bR)$, $\su(1,3)$ or $\su(2,2)$, $\gh=\gh_\bullet\oplus\gh_\circ$ an adapted Cartan subalgebra with root system $\Delta$ and associated system $\Pi=\left\{\a_1,\a_2,\a_3\right\}$ of simple roots satisfying $\l(\a_1)=\l(\a_3)=1$ and $\l(\a_2)=0$. 

Since $\l(\a_1+\a_2+\a_3)=2$, two possibilities occur in $\wh\gg^{-1}=\gg^{-1(10)}\oplus\gg^{-1(01)}$: 
\par\noindent 
\begin{centering}
\begin{table}[H]
\begin{tabular}{|c|c|c|}
\hline
\hline
$\gg^{-1(10)^{\phantom{C^{C^C}}}}\!\!\!\!\!\!\!\!\!\!$  & $\gg^{-1(01)^{\phantom{C^{C^C}}}}\!\!\!\!\!\!\!\!\!\!$ & $\wh\gg^{0}$ \\
\hline
\hline
&&\\[-3mm]
$\begin{gathered}\\ \gg^{-(\a_1+\a_2)}\oplus\gg^{-(\a_2+\a_3)}\\ \\ \end{gathered}$
&
$\gg^{-\a_1}\oplus\gg^{-\a_3}$
 & $\wh\gh\oplus\gg^{\a_2}\oplus\gg^{-\a_2}$
\\
\hline
&&\\[-3mm]
$\begin{gathered}\\ \gg^{-\a_3}\oplus\gg^{-(\a_2+\a_3)}\\ \\ \end{gathered}$
&
$\gg^{-\a_1}\oplus\gg^{-(\a_1+\a_2)}$
 & $\wh\gh\oplus\gg^{\a_2}\oplus\gg^{-\a_2}$
\\
\hline 
\hline
\end{tabular}
\caption{}
\label{tabellaposs2}
\end{table}
\end{centering}
\par\noindent
In the second case $\wh\gg^0=S^{1,1}\oplus\bC E$ and hence $\dim(\gm)=5$, a contradiction. 

In the first case $\gg^{-\a_2}\subset\mathfrak{M}^{0(10)}$ and $\gg^{\a_2}\subset\mathfrak{M}^{0(01)}$ and there is no contradiction. To check that this case does actually occur we need to explicitly identify the adapted Cartan subalgebras of $\su(1,3)$, $\su(2,2)$ and $\sl_4(\bR)$; we rely upon the classification, up to conjugation, of the Cartan subalgebras of the simple real Lie algebras given in \cite{Su}.

If $\gg=\su(1,3)$ there are two Cartan subalgebras, the compact one (clearly not adapted) and the subalgebra $\gh=\gh_{\bullet}\oplus\gh_{\circ}$ with $\dim \gh_{\bullet}=2$, $\dim \gh_{\circ}=1$:
\begin{align}
\notag
\begin{split}
\gh_{\bullet}&=\left\{H=\begin{pmatrix} u_1 & 0 & 0 & 0\\
0 & u_1 & 0 & 0\\
0 & 0 & u_3 & 0\\
0 & 0 & 0 & u_4\\
\end{pmatrix}\,|\,u_i\in i\bR\;\;\text{and}\;\; \tr H=0\right\}\;,\\
\gh_{\circ}&=
\left\{H=\begin{pmatrix} 0 & h_1 & 0 & 0\\
h_1 & 0 & 0 & 0\\
0 & 0 & 0 & 0\\
0 & 0 & 0 & 0\\
\end{pmatrix}\,|\,h_1\in \bR\right\}\;.
\end{split}
\end{align}
Let $\left\{E,\mathfrak J,H\right\}$ be the basis of $\gh$ given in Table \ref{tabellona3}; 
$E\in\gh_\circ$ and coincides with the grading element of the grading of $\su(1,3)$ in Table \ref{tabellona2}, 
the set $\left\{\mathfrak J,H\right\}$ is a basis of $\gh_\bullet$. Moreover $\ad(\mathfrak J)|_{\gg^{-1}}$ and $\ad(H)|_{\gg^{-1}}$ are two linearly independent and commuting complex structures on $\gg^{-1}$ but the eigenspace decomposition of $\ad(H)|_{\gg^{-1}}$ is as in the second case of Table \ref{tabellaposs2} and hence not permissible.

It follows from this observation, Table \ref{tabellona3} and the decomposition of $\su(1,3)$ under the adjoint action of $\mathfrak{J}$ that
\beq
\label{scelta}
\begin{split}
&\gg^{-\alpha_1}=\left\langle e^{01}_1\right\rangle\;,\qquad\gg^{-\alpha_3}=\left\langle e^{01}_2\right\rangle\;,\\
&\gg^{-(\alpha_1+\a_2)}=\left\langle e^{10}_2\right\rangle\;,\qquad\gg^{-(\a_2+\alpha_3)}=\left\langle e^{10}_1\right\rangle\;,\\
&\gg^{-\alpha_2}=\gm^{0(10)}=\left\langle e^{0(10)}\right\rangle\;,\qquad \gg^{\alpha_2}=\gm^{0(01)}=\left\langle e^{0(01)}\right\rangle\;,
\end{split}
\eeq
and $\gg=\bigoplus\gg^p$ is a model of type $\gm$. 

If $\gg=\su(2,2)$ there are three Cartan subalgebras, the compact one (not adapted), the one with
$\dim\gh_\bullet=1$ and $\dim\gh_\circ=2$ and finally the Cartan subalgebra
$\gh=\gh_{\bullet}\oplus\gh_{\circ}$ with $\dim \gh_{\bullet}=2$, $\dim \gh_{\circ}=1$:
\begin{align}
\label{sopra}
\begin{split}
\gh_{\bullet}&=\left\{H=\begin{pmatrix} u_1 & 0 & 0 & 0\\
0 & u_2 & 0 & 0\\
0 & 0 & u_1 & 0\\
0 & 0 & 0 & u_4\\
\end{pmatrix}\,|\,u_i\in i\bR\;\;\text{and}\;\; \tr H=0\right\}
\;,\\
\gh_{\circ}&=
\left\{H=\begin{pmatrix} 0 & 0 & h_1 & 0\\
0 & 0 & 0 & 0\\
h_1 & 0 & 0 & 0\\
0 & 0 & 0 & 0\\
\end{pmatrix}\,|\,h_1\in \bR\right\}
\;.
\end{split}
\end{align}
The Cartan subalgebra with $\dim\gh_\bullet=1$ is not adapted, since in that case $\gh_\bullet$ is generated by a semisimple element with eigenvalues $0,\pm i$. The subalgebra \eqref{sopra} is adapted and gives rise to a model, using \eqref{scelta}, Table \ref{tabellona3} and an argument analogous to the previous case.

If $\gg=\sl_{4}(\bR)$ there are three Cartan subalgebras, the vectorial one (not adapted), the one with $\dim\gh_\bullet=2$ and $\dim\gh_\circ=1$ and finally the Cartan subalgebra $\gh=\gh_{\bullet}\oplus\gh_{\circ}$ with $\dim\gh_\bullet=1$ and $\dim\gh_\circ=2$:
\begin{align}
\label{aggiuntina}
\begin{split}
\gh_{\bullet}&=\left\{H=\begin{pmatrix} 0 & - h_2 & 0 & 0\\
h_2 & 0 & 0 & 0\\
0 & 0 & 0 & 0\\
0 & 0 & 0 & 0\\
\end{pmatrix}\,|\, h_2\in \bR\right\}
\;,\\
\gh_{\circ}&=
\left\{H=\begin{pmatrix} h_1 & 0 & 0 & 0\\
0 & h_1 & 0 & 0\\
0 & 0 & h_3 & 0\\
0 & 0 & 0 & h_4\\
\end{pmatrix}\,|\,h_i\in\bR\;\text{and}\; \operatorname{tr}H=0\right\}
\;.
\end{split}
\end{align}
The Cartan with $\dim\gh_\circ=1$ is not adapted, as in that case $\gh_\circ$ is generated by a semisimple element with eigenvalues $0,\pm 1$. The Cartan \eqref{aggiuntina} is adapted. To see this, 
note that the set $\left\{E, H\right\}$ as in Table \ref{tabellona3}
is a basis of $\gh_\circ$ consisting of grading elements of two inequivalent gradings. The element corresponding to the grading of Table \ref{tabellona2} is $E$. We can therefore argue as in previous cases once we note that in this case the decomposition of $\sl_4(\bR)$ under the adjoint action of the generator  $\mathfrak{J}$ of $\gh_\bullet$ is 
\beq
\label{scelta2}
\begin{split}
&\gg^{-\alpha_1}=\left\langle e^{01}_1\right\rangle\;,\qquad\gg^{-\alpha_3}=\left\langle e^{01}_2\right\rangle\;,\\
&\gg^{-(\alpha_1+\a_2)}=\left\langle e^{10}_1\right\rangle\;,\qquad\gg^{-(\a_2+\alpha_3)}=\left\langle e^{10}_2\right\rangle\;,\\
&\gg^{-\alpha_2}=\gm^{0(10)}=\left\langle e^{0(10)}\right\rangle\;,\qquad \gg^{\alpha_2}=\gm^{0(01)}=\left\langle e^{0(01)}\right\rangle\;,
\end{split}
\eeq
where $\left\{e_1,\ldots,e_4\right\}$ is a \emph{complex-Witt basis}. This proves (i) and (ii).

The second part of (iii) follows from Theorem \ref{primoteorema} and  $\sgn(\mathfrak J)$ from \eqref{simple} and \eqref{complexsymplectic}-\eqref{complexWitt}. A direct inspection of the adjoint action of $e^{0(10)}$ on $\gg^{-1(01)}$ and equation \eqref{identificationsymmetric} 
give the component $\gm^{0(10)}$ of the core for $\su(2,2)$ and $\su(1,3)$; 
in the first case we need to note
$$\gm^{0(10)}=\left\langle e_1^{10}\odot e_2^{10}\right\rangle\simeq\left\langle e_1^{10}\odot e_1^{10}+e_2^{10}\odot e_2^{10}\right\rangle\;.$$ 
If $\gg=\sl_4(\bR)$ then $\gm^{0(10)}=\left\langle (e'_1)^{10}\odot (e'_1)^{10}-(e'_2)^{10}\odot (e'_2)^{10}\right\rangle$ 
w.r.t. the basis \eqref{Witt-symplectic} associated to the complex-Witt basis of Table \ref{tabellona3}. The Lie algebra $\gn_\sharp$ of the stabilizer of $\gm^{0(10)}$ is given in Theorem \ref{firstcores} and Theorem \ref{secondcores}. 

The first part of (iii) follows from the following argument. Note that the complex subalgebra $\gq=\wh\gg\cap\gu$ of the CR algebra $(\gg,\gq)$ associated with the model $\gg$ is
\begin{align*}
\gq&=\wh{\gg}^{2}+\wh{\gg}^{1}+\wh\gh+\gg^{-\alpha_2}+\gg^{-(\alpha_1+\alpha_2)}+\gg^{-(\alpha_2+\alpha_3)}\;.
\end{align*}
This is the maximal $11$-dimensional parabolic subalgebra of $\wh\gg$
which corresponds, for an appropriate choice of system of simple roots, to the nonnegatively graded part of the $\bZ$-grading of $\wh\gg$ associated with the simply crossed Dynkin diagram 
{\tiny $\begin{tikzpicture}
\node[root] (1) {}; 
\node[xroot]   (2) [right=of 1] {} edge [-] (1);
\node[root]   (3) [right=of 2] {} edge [-] (2);
\end{tikzpicture}$} (a word of caution: the $\bZ$-grading of Table \ref{tabella} used in the construction of the models is different and associated with the $10$-dimensional parabolic subalgebra $\wh\gg_{\geq 0}=\wh\gg^0\oplus\wh\gg^1\oplus\wh\gg^2$). In particular $(\gg,\gq)$
is a \emph{parabolic CR algebra} in the sense of \cite{MeNa3} and hence there  exists always an associated  globally defined homogeneous CR manifold $M=G/G_o$.
This proves (iii).

Now $\gh=\gn_\sharp$ follows from $\dim\gh=\dim\gn_\sharp=3$ and the fact that the root space $\gm^{0(10)}$ is preserved by $\gh$. This proves (iv). 

We turn to (v). Let $\wh\gg_{\leq 0}=\wh\gg^{-2}\oplus\wh\gg^{-1}\oplus\wh\gg^{0}$ be the nonpositively graded graded part of $\wh\gg$; the adjoint action of $\wh\gg^0\simeq\ggl_2(\bC)\oplus\bC E$ on $\wh\gg_-=\wh\gg^{-2}\oplus\wh\gg^{-1}$ is given in Table \ref{tabella} where $\bC^{2}=\gg^{-\a_1}\oplus\gg^{-(\a_1+\a_2)}$ and $(\bC^2)^*=
\gg^{-\a_3}\oplus\gg^{-(\a_3+\a_2)}$. 
We first claim that the maximal prolongation $\wh\gg_{\infty}$ of $\wh\gg_{\leq 0}$ is finite-dimensional. This is a consequence of a deep theorem of Tanaka (see \cite[Theorem 11.1]{Ta1} and also \cite[Corollary 2, pag. 76]{Ta1}) based on some arguments of Serre on Spencer cohomology of Lie algebras (see \cite{GS}). In the form suitable for our purposes, this result says: 
\vskip0.2cm\par
{\it The maximal transitive prolongation $\wh\gg_{\infty}$ of a fundamental and transitive $\bZ$-graded Lie algebra $$\wh\gg_{\leq 0}=\bigoplus_{-2\leq p\leq 0}\wh\gg^{p}$$ is finite-dimensional if and only if the usual Cartan prolongation (in the sense of e.g. \cite[Chapter VII]{St}) of the linear Lie algebra 
$$
\mathfrak k=\left\{X\in\wh\gg^0\mid [X,\wh\gg^{-2}]=0^{\phantom{C^{C^C}}}\!\!\!\!\!\!\!\!\!\!\!\right\}\subset \mathfrak{gl}(\wh\gg^{-1})$$ is finite-dimensional.} 
\vskip0.2cm\par
In our case $\mathfrak k=\ggl_2(\bC)$. Consider the bilinear form $\beta$ on $\wh\gg^{-1}$ given by
$$
\beta(z+z^*,w+w^*)=z^*(w)+w^*(z)
$$
where $z,w\in\bC^2$ and $z^*, w^*\in(\bC^2)^*$. Straightforward computations show:
\begin{itemize}
\item[(i)] $\beta$ is symmetric and nondegenerate;
\item[(ii)] $\mathfrak k\subset\so(\wh\gg^{-1},\beta)$.
\end{itemize}
It follows that $\mathfrak k$ has a trivial Cartan prolongation and hence $\dim(\wh\gg_\infty)<+\infty$.

Let now $\gg'$ be another model with $\gg'\supseteq\gg$. Note that
$\gg'^0=\gg^0$ as $\gg'^0\supseteq\gg^0$, $\dim(\gm)=7$ and $\gn_\sharp\subset\gg^0$.
It follows that $\wh\gg'$ is a transitive prolongation of the same $\wh\gg'_{\leq 0}=\wh\gg_{\leq 0}$ and hence a subalgebra of $\wh\gg_\infty$. In particular it is finite-dimensional and $\wh\gg'=\wh\gg$ by \cite[Theorem 3.21]{MeNa1}. 

Finally (vi) follows by a direct computation using \eqref{chebarba} and $\overline\gq=\wh{\gg}^{2}\oplus\wh{\gg}^{1}\oplus\wh\gh\oplus\gg^{\alpha_2}\oplus\gg^{-\alpha_3}\oplus\gg^{-\alpha_1}$, we omit the details. The theorem is proved.
\end{proof}
We remark that the models $\gg$ of Theorem \ref{thmsimple} have a unique up to conjugation admissible Cartan subalgebra $\gh$. It is maximally noncompact only in one case, namely for $\gg=\su(1,3)$.

Theorem \ref{mainI} follows from Theorem \ref{thmsimple} and the following Theorem \ref{mainII}. We recall that any core $\gm=\gm^{-2}\oplus\gm^{-1}\oplus\gm^{0}$, $\dim(\gm)=7$, is completely determined by the complex line $\gm^{0(10)}\subset\mathfrak M^{0(10)}\simeq S^2\bC^2$ given by $\wh\gm^0=\gm^{0(10)}\oplus\overline{\gm^{0(10)}}$ and that  the Lie algebra $\gn_\sharp$
of the stabilizer of $\gm^{0(10)}$ has already been described in \S\ref{sec:7cores}. 
\begin{theorem}
\label{mainII}
Let $\gm$ be a core, $\dim(\gm)=7$, $\operatorname{ht}(\gm)=0$, given by
$\gm=\gm_1$ in Table \ref{tablecore20} or 
$\gm_{t}$ with $t=\pm 1$ and $\gm_{\rm null}$ in Table \ref{simbacacca}.
Then the $\bZ$-graded subspace $\displaystyle\gg=\bigoplus\gg^p$ 
of $\gc$ with components
\begin{equation*}
\gg^{p}=
\begin{dcases}
0\;\;\;\;\;\;\;\;\,\,\,\,\qquad\qquad\qquad&\text{for all}\;\;p<-2\;\;\;\text{and}\;\;p>0\,,\\
\gc^{p}\,\;\;\;\;\;\;\;\;\;\;\;\;\qquad\qquad\qquad&\text{for}\;\;p=-2,-1\,,\\
\gn_\sharp\oplus\Re(\gm^{0(10)}\oplus\overline{\gm^{0(10)}})\,\;\;\;\;\;\;\;\;\;&\text{for}\;\;p=0\;,
\end{dcases}
\end{equation*}
is a model of type $\gm$ with $\dim(\gg)$, $\sgn(\mathfrak J)$, equivalence class $[\gm^{0(10)}]$ of $\gm^{0(10)}$ and Lie algebra $\gn_\sharp$ given by:
\par\noindent
\begin{centering}
\begin{table}[H]
\begin{tabular}{|c|c|c|c|}
\hline
\hline
$\dim(\gg)^{\phantom{C^{C^C}}}\!\!\!\!\!\!\!\!\!\!$ &  $\operatorname{sgn}(\mathfrak J)$ & $[\gm^{0(10)}]$ & $\gn_{\sharp_{\phantom{C_{C_C}}}}^{\phantom{C^{C^C}}}\!\!\!\!\!\!\!\!\!\!$ \\
\hline
\hline
&&&\\[-1mm]
$10$
 & $(2,0)_{\phantom{C_{C_C}}}^{\phantom{C^{C^C}}}\!\!\!\!\!\!\!\!\!\!$ & $e_1^{10}\odot e_1^{10}$
&
$\bC\oplus\so_2(\bR)$
\\
\hline
&&&\\[-1mm]
$10$
 & $(1,1)_{\phantom{C_{C_C}}}^{\phantom{C^{C^C}}}\!\!\!\!\!\!\!\!\!\!$ & $e_1^{10}\odot e_1^{10}$
&
$\bC\oplus\so_2(\bR)$
\\
\hline 
&&&\\[-1mm]
$10$
 & $(1,1)_{\phantom{C_{C_C}}}^{\phantom{C^{C^C}}}\!\!\!\!\!\!\!\!\!\!$ & $e_2^{10}\odot e_2^{10}$
&
$\bC\oplus\so_2(\bR)$
\\
\hline
&&&\\[-1mm]
$11$ & $(1,1)_{\phantom{C_{C_C}}}^{\phantom{C^{C^C}}}\!\!\!\!\!\!\!\!\!\!$ & $e_1^{10}\odot e_1^{10}-e_2^{10}\odot e_2^{10}-2ie_1^{10}\odot e_2^{10}$
&
$\bC\oplus(\bR\inplus\bR)$
\\
\hline 
\hline
\end{tabular}
\caption{}
\label{finaltabella}
\end{table}
\end{centering}
\par\noindent
In all cases there exists an associated homogeneous CR manifold $M=G/G_o$, $Lie(G)=\gg$, $Lie(G_o)=\gn_\sharp$, which is globally defined.
\end{theorem}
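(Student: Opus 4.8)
The plan is to verify directly that the graded subspace $\gg=\gg^{-2}\oplus\gg^{-1}\oplus\gg^0$ of the statement is a graded Lie subalgebra of $\gc$ satisfying the four axioms of Definition \ref{ukip}, and then to let Theorem \ref{primoteorema} supply every CR-geometric conclusion. Since $\gg^p=\gc^p$ for $p<0$ and $\gg^p=0$ for $p>0$, the only brackets that are not automatic are those internal to $\gg^0$; note that $\gg_-=\gc_-$ is an ideal, so $\gg=\gg_-\rtimes\gg^0$ as soon as $\gg^0$ is a subalgebra. Writing $W=\gm^{0(10)}\subset S^{2,0}$ and $\bar W=\overline{\gm^{0(10)}}\subset S^{0,2}$, I split $[\gg^0,\gg^0]$ into three pieces: $[\gn_\sharp,\gn_\sharp]\subset\gn_\sharp$ is automatic; $[\gn_\sharp,\Re(W\oplus\bar W)]\subset\Re(W\oplus\bar W)$ holds because $\gn_\sharp$ is by definition the real stabilizer of the line $W$ and hence preserves both $W$ and $\bar W$; and since $\gc^{-1(10)},\gc^{-1(01)}$ are the $\ad(\mathfrak J)$-eigenspaces, they are isotropic for $B$, so the identification $\gk^0\simeq S^2(\gc^{-1})$ and \eqref{identificationsymmetric} give $[W,W]=[\bar W,\bar W]=0$ and $[W,\bar W]\subset S^{1,1}$.

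The crux is therefore the inclusion $[W,\bar W]\subset\gn_\sharp^\bC$, and this is the step I expect to be the main obstacle, since it is precisely where admissibility of the core enters. Let $z$ generate $W$. Conjugation with respect to $\gc$ sends $z\mapsto\bar z$, so $U=[z,\bar z]\in S^{1,1}$ satisfies $\sigma(U)=-U$, whence $U=iU_0$ with $U_0\in\Re(S^{1,1})=\mathfrak u(r,s)$. Using \eqref{identificationsymmetric} together with $B(\gc^{-1(10)},\gc^{-1(10)})=0$, a short computation shows $[U,z]\in\bC z$ for each of the four canonical generators listed in the statement; for instance $z=e_1^{10}\odot e_1^{10}$ gives $[z,\bar z]\in\bC\,e_1^{10}\odot e_1^{01}$ and $[e_1^{10}\odot e_1^{01},z]\in\bC z$. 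Thus $U_0$ stabilizes the line $W$; since the grading element $E$ acts trivially on $\gc^0$ by $\ad$, the real isotropy of $[z]$ inside $\mathfrak{cu}(r,s)=\bR E\oplus\Re(S^{1,1})$ is exactly $\gn_\sharp$, so $U_0\in\gn_\sharp\cap\Re(S^{1,1})$ and $[z,\bar z]=iU_0\in\gn_\sharp^\bC$. I would carry out these four verifications case by case, stressing that for a non-admissible line $[z]$ the element $[z,\bar z]$ fails to stabilize $W$ and the construction collapses, which is what confines the statement to the admissible cores of Tables \ref{tablecore20} and \ref{simbacacca}.

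Once $\gg^0$ is a subalgebra, the axioms of Definition \ref{ukip} are immediate: (i) and (ii) hold by construction (with $E\in\bC\subset\gn_\sharp$), and for (iii), (iv) I use $\gu^0=S^{2,0}\oplus S^{1,1}\oplus\bC E$ with $\gn_\sharp^\bC\subset\bC E\oplus S^{1,1}$, so that $\wh\gg^0\cap\gu^0=W\oplus\gn_\sharp^\bC$ and $\wh\gg^0\cap\bar\gu^0=\bar W\oplus\gn_\sharp^\bC$; their sum is all of $\wh\gg^0$ and $\pi_{\mathfrak M^{0(10)}}(\wh\gg^0\cap\gu^0)=W=\gm^{0(10)}$. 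Hence $\gg$ is a model of type $\gm$, and Theorem \ref{primoteorema} yields that $(\gg,\gq)$ with $\gq=\wh\gg\cap\gu$ is a CR algebra whose associated $M=G/G_o$ is of hypersurface type, strongly regular of type $\gm$ and $2$-nondegenerate (as $\operatorname{ht}(\gm)=0$), with $\gg_o=\gg\cap\gu=\gn_\sharp$ — the last equality because the real elements of $\gg^0$ lying in $\gu^0$ are exactly those with vanishing $S^{2,0}\oplus S^{0,2}$-component. The entries of Table \ref{finaltabella} are then read off: $\dim(\gg)=1+4+\dim_\bR\gn_\sharp+\dim_\bR\Re(W\oplus\bar W)=7+\dim_\bR\gn_\sharp-2$, while $\sgn(\mathfrak J)$ and $[\gm^{0(10)}]$ are prescribed and the algebras $\gn_\sharp$ are those of Proposition \ref{firstorbitprop} and Proposition \ref{secondorbitprop}.

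For the final assertion it suffices to exhibit one connected Lie group $G$ with the finite-dimensional Lie algebra $\gg=\gc_-\rtimes\gg^0$ in which the analytic subgroup $G_o$ with $Lie(G_o)=\gg_o=\gn_\sharp$ is closed; following the convention announced in the introduction I would check this directly in each case. Concretely $\gn_\sharp$ is abelian in the first three rows of Table \ref{finaltabella} and solvable in the last, and the one-parameter subgroups it generates are closed — the compact $\SO_2(\bR)$ and the circle generated by $\mathfrak J$ together with the $\bR$ of dilations generated by $E$ in the first three cases, and the closed group generated by the null rotation in the last case — so $G_o$ is closed and $M=G/G_o$ is globally defined. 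Finally, the non-equivalence statement follows because the core is a CR invariant by Lemma \ref{lemmino} and the cores attached to distinct rows are pairwise non-isomorphic by Theorems \ref{firstcores} and \ref{secondcores}.
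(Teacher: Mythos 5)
Your proposal is correct and follows the same skeleton as the paper's proof: everything reduces to showing that $\gg^0=\gn_\sharp\oplus\Re(\gm^{0(10)}\oplus\overline{\gm^{0(10)}})$ is a Lie subalgebra of $\gc^0$, the three easy brackets $[\gn_\sharp,\gn_\sharp]$, $[\gn_\sharp,\Re(W\oplus\bar W)]$ and $[W,W]=[\bar W,\bar W]=0$ are disposed of exactly as in the paper (your reason for the last one, isotropy of the $\ad(\mathfrak J)$-eigenspaces for $B$, is even slightly more robust than the paper's one-dimensionality argument), and the table, $\gg_o=\gn_\sharp$, and the CR-geometric conclusions are delegated to Theorem \ref{primoteorema} and to Tables \ref{tablecore20} and \ref{simbacacca}, as in the paper. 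The one place where you genuinely diverge is the crux $[W,\bar W]\subset\wh\gn_\sharp$: the paper settles it by an unexplained ``explicit computation'' using \eqref{complexsymplectic} and Proposition \ref{noioso}, detailing only the $\gm_{\rm null}$ case where the bracket vanishes, whereas you give a uniform conceptual reduction — $U=[z,\bar z]$ is $i$ times a real element of $\Re(S^{1,1})$, so it suffices to check $[U,z]\in\bC z$, and then $U_0$ lies in the stabilizer of $[z]$ in $\mathfrak{cu}(r,s)=\bR E\oplus\Re(S^{1,1})$, which is exactly $\gn_\sharp$. This is a nice improvement: it explains \emph{why} the bracket lands in $\wh\gn_\sharp$ (and why admissibility is the relevant hypothesis — e.g.\ for $z\propto(1+t)e_1^{10}\odot e_1^{10}+(1-t)e_2^{10}\odot e_2^{10}$ with $0<t<1$ one gets $[U,z]\propto(1+t)^3e_1^{10}\odot e_1^{10}+(1-t)^3e_2^{10}\odot e_2^{10}\notin\bC z$), and I verified that $[U,z]\in\bC z$ does hold in all four listed cases. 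You also explicitly check axioms (ii)--(iv) of Definition \ref{ukip} via $\gu^0=S^{2,0}\oplus S^{1,1}\oplus\bC E$, which the paper leaves implicit.

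Two small blemishes, neither structural. First, an arithmetic slip: from your own (correct) sum $\dim(\gg)=1+4+\dim_\bR\gn_\sharp+2$ one gets $7+\dim_\bR\gn_\sharp$, i.e.\ $10$ and $11$ as in Table \ref{finaltabella}, not ``$7+\dim_\bR\gn_\sharp-2$''. Second, your closedness argument is loose: in the simply connected $G$ the one-parameter subgroup generated by $\mathfrak J$ need not be a circle, and closedness of individual one-parameter subgroups does not by itself give closedness of $G_o$; note, for instance, that in the first three rows $\gg^0$ contains the three-dimensional simple algebra spanned by $\Re(z)$, $\Im(z)$ and $U_0$, so the global structure of $G$ is not as transparent as ``solvable pieces''. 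The paper, to be fair, also dismisses this point as ``a direct task'' with details omitted, so your treatment is at the same level of rigor, but the specific justifications you offer would need repair in a full write-up.
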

\begin{proof}
It is sufficient to prove that $\gg$ is a Lie subalgebra of $\gc$ or, equivalently, $\wh\gg$ a Lie subalgebra of $\wh\gc$. Since $\wh\gg$ is nonpositively $\bZ$-graded with $\wh\gg^p=\hat\gc^p$ for all $p<0$, what we really need to show is that $\wh\gg^0=\wh\gn_\sharp\oplus\gm^{0(10)}\oplus\overline{\gm^{0(10)}}$ is a Lie subalgebra of $\hat\gc^0$. First of all:
\begin{itemize}
\item[(i)] $[\wh\gn_\sharp,\wh\gn_\sharp]\subset\wh\gn_\sharp$ since $\gn_\sharp$ is a Lie algebra;
\item[(ii)] $[\wh\gn_\sharp,\gm^{0(10)}]\subset\gm^{0(10)}$ and $[\wh\gn_\sharp,\overline{\gm^{0(10)}}]\subset\overline{\gm^{0(10)}}$
by the definition of $\gn_\sharp$; 
\item[(iii)] $[\gm^{0(10)},\gm^{0(10)}]=[\overline{\gm^{0(10)}},\overline{\gm^{0(10)}}]=0$ since $\dim_{\bC}(\gm^{0(10)})=1$.
\end{itemize}
Finally $[\gm^{0(10)},\overline{\gm^{0(10)}}]\subset\wh\gn_\sharp$ by an explicit computation which uses \eqref{complexsymplectic} and Proposition \ref{noioso}. We only give the details for the last case of Table \ref{finaltabella}, for which $[\gm^{0(10)},\overline{\gm^{0(10)}}]=0$ actually holds: set 
\begin{align*}
X&=e_1^{10}\odot e_1^{10}-e_2^{10}\odot e_2^{10}-2ie_1^{10}\odot e_2^{10}\qquad\qquad\qquad
\end{align*}
and compute
\begin{align*}
[X, \overline{X}]&=-2ie_1^{10}\odot e_1^{01}+2e_1^{10}\odot e_2^{01}+2i e_2^{10}\odot e_2^{01}+2e_2^{10}\odot e_1^{01}\\
&\phantom{=}\;\,-2 e_2^{10}\odot e_1^{01}-2e_1^{10}\odot e_2^{01}-2ie_2^{10}\odot e_2^{01}+2i e_1^{10}\odot e_1^{01}\\
&=0\;.
\end{align*}
Table \ref{finaltabella} comes directly from Table \ref{tablecore20} and Table \ref{simbacacca}. Finally, we consider the simply connected Lie group $G$ with Lie algebra $Lie(G)=\gg$ and it is a direct task to see that the analytic subgroup $G_o$ of $G$ with Lie algebra $Lie(G_o)=\gn_\sharp$ is closed in $G$. We omit the details.
\end{proof}
\begin{remark}\hfill
\begin{itemize}
\item[(i)] Each of the models in Table \ref{finaltabella} satisfies property $(\mathfrak J)$ since $\mathfrak J\in\gn_\sharp$ by definition;
\item[(ii)] We do not know if the models in Table \ref{finaltabella} are maximal or not;
\item[(iii)] We believe that there are no models associated with cores of type (B) that are not admissible and checked 
the conjecture when $\sgn(\mathfrak J)=(2,0)$. 
It would be interesting to understand if $7$-dimensional $2$-nondegenerate strongly regular CR manifolds with non-admissible cores exist or not.
On this regard, we remark that for most CR-dimensions and CR-codimensions the assumption of strong regularity is quite restrictive.
\end{itemize}
\end{remark}
\section{Models for \texorpdfstring{$7$}{7}-dimensional and \texorpdfstring{$3$}{3}-nondegenerate CR manifolds}\hfill\par\noindent
\setcounter{equation}{0}
\setcounter{section}{6}
\label{exexamplesII}
There is a unique abstract core $\gm=\bigoplus\gm^p$ 
associated to $3$-nondegenerate and $7$-dimensional CR manifolds. It is realized inside 
the real contact algebra $\gc=\bigoplus\gc^p$ of degree $n=1$ as
\begin{equation}
\label{kore2}
\begin{split}
\gm^{-2}&=\gc^{-2}=\left\langle e^{-2}\right\rangle\;,\qquad\gm^{-1}=\gc^{-1}=\left\langle e_1,e_2\right\rangle\;,\\
\gm^{0(10)}&=\mathfrak{M}^{0(10)}\;,\qquad\gm^{1(10)}=\mathfrak{M}^{1(10)}\;,
\end{split}
\end{equation}
where $\mathfrak{M}^{0(10)}$ and $\mathfrak{M}^{1(10)}$ are the $\ad(\mathfrak J)$-eigenspaces of maximal eigenvalue $2i$ in $\wh\gc^0$ and, respectively, $3i$ in $\wh\gc^1$. 
For notational convenience, we denote the elements of the basis $\left\{e_1^{10},e_1^{01}\right\}$ of $\wh\gc^{-1}=S^{1,0}\oplus S^{0,1}$ simply by
$
z=e_1^{10}$ and $\bar z=e_{1}^{01}$, drop the symbol $\odot$ in the expression of the symmetric products and identify each image $\Im(\mu^{p|0})\subset\gc^p$ with $\bR$ using $\mu^{p|0}(e^{-2})$ as basis (recall also the discussion after Proposition \ref{noioso}). For instance we write 
$$[z,\bar z]=-\frac{i}{2}\;,\qquad 
\mathfrak{J}=
2z\bar z\;,\qquad 
E= -2\;,$$
where $E$ is the grading element.
\begin{theorem}
\label{thmstrange}
There exists a maximal model $\gg$ of type \eqref{kore2} and it is unique up to isomorphism. 
It is given by the $8$-dimensional $\bZ$-graded Lie subalgebra
\begin{equation*}
\gg=\bigoplus_{p\in\bZ}\gg^{p}
\end{equation*} 
of the real contact algebra $\gc$ of degree $n=1$ with components
$$
\gg^p=\begin{cases}
0\;\;\;\;\;\;\;\;\;\;\;\;\;\;\;\;\;\;\;\;\;\;\;\;\;\;\;\;\;\;\;\;\;\;\;\;\;\;\;\;\;\;\;\;\;\;\;\;\;\;\;&\text{for all}\;\;p<-2\;\text{and}\;p>1\;,\\
\gc^{p}\,\;\;\;\;\;\;\;\;\;&\text{for}\;\;p=-2, -1\;,\\
\Re\left\langle E, M, \overline M\right\rangle\,\;\;\;\;\;&\text{for}\;\;p=0\;,\\
\Re\left\langle N, \overline N\right\rangle\,\,\,\;\;\;\;\;\;&\text{for}\;\;p=1\;,\\
\end{cases}
$$
where $M=z^2+z\overline z\in\mathfrak{M}^{0(10)}$ and  $N=z^3+2z^2\bar z+z\bar z^2-3iz-3i\bar z\in\mathfrak{M}^{1(10)}$. 
The associated terms of the Freeman sequence are
\begin{align*}
\gq_{-1}&=\gq=\left\langle z, E, M, N  \right\rangle\,,\quad
\gq_{0}=\left\langle E, M, N \right\rangle\,,\\
\gq_{1}&=\left\langle E, N  \right\rangle\,,\qquad
\gq_2=\gq\cap\overline\gq=\left\langle E\right\rangle\ .
\end{align*}
Moreover there is a $7$-dimensional $3$-nondegenerate homogeneous CR manifold $M=G/G_o$, $Lie(G)=\gg$, $Lie(G_o)=\mathbb R E$, which is  globally defined.
\end{theorem}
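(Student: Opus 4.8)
The plan is to prove the theorem in four movements: an explicit construction of $\gg$ together with the verification that $\wh\gg$ is a subalgebra of $\wh\gc$; the checking of the axioms of Definition \ref{ukip}, so that Theorem \ref{primoteorema} delivers all the geometric statements and the Freeman sequence; a rigidity analysis giving uniqueness and maximality; and finally the closedness of $G_o$. First I would take $\gg=\bigoplus\gg^p$ to be the graded subspace with the components in the statement and check that $\wh\gg$ is closed under the bracket of $\wh\gc$. Since $\gg^p=\gc^p$ for $p<0$ and $\gg^p=0$ for $p>1$, and $\gg^{-2}$ brackets into lower degrees in the obvious way, the only nontrivial relations are $[\gg^1,\gg^1]\subset\gg^2=0$ and the closure of the brackets landing in degrees $0$ and $1$. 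Using the realization $\gk^p\simeq S^{p+2}(\gc^{-1})$, the rule \eqref{identificationsymmetric}, and the recursive formulas \eqref{cash}--\eqref{june} of Proposition \ref{noioso}, I would establish the four identities
$$
[M,\overline M]=-i\,(M+\overline M)\in\wh\gg^0,\qquad [N,\overline N]=0,
$$
$$
[z,N],\ [\bar z,N]\in\wh\gg^0=\langle E,M,\overline M\rangle,\qquad [M,N],\ [\overline M,N]\in\wh\gg^1=\langle N,\overline N\rangle .
$$
The essential point is that the lower-order corrections in $M=z^2+z\bar z$ and in $N=z^3+2z^2\bar z+z\bar z^2-3iz-3i\bar z$ are forced: writing $M=z^2+c\,z\bar z$ and $N=z^3+a\,z^2\bar z+b\,z\bar z^2+\mu^1(\alpha z+\beta\bar z)$ with undetermined constants and imposing these closure relations pins down $c=1$ (the alternative $c=-1$ being carried to $c=1$ by the automorphism $z\mapsto iz$ of $\Aut(\gc,\mathfrak{J})$) and $(a,b,\alpha,\beta)=(2,1,-3i,-3i)$. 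I expect this degree-one computation to be the main obstacle: it is precisely the step for which the explicit brackets of Proposition \ref{noioso} are indispensable, and the verification of $[N,\overline N]=0$ jointly with $[M,N]\in\langle N,\overline N\rangle$ is the most delicate part.

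Next I would verify Definition \ref{ukip}. Conditions (i)--(ii) hold by construction, and (iii)--(iv) reduce to $\ad(\mathfrak{J})$-eigenvalue bookkeeping: in $\wh\gc^0$ the minimal eigenvalue $-2i$ occurs only on $S^{0,2}=\mathfrak{M}^{0(01)}$, so $\wh\gg^0\cap\gu^0=\langle E,M\rangle$ and $\wh\gg^0\cap\overline\gu^0=\langle E,\overline M\rangle$, whence (iii) holds and $\pi_{\mathfrak{M}^{0(10)}}(\wh\gg^0\cap\gu^0)=\langle z^2\rangle=\gm^{0(10)}$; likewise in degree one the minimal eigenvalue $-3i$ occurs only on $S^{0,3}$, giving $\wh\gg^1\cap\gu^1=\langle N\rangle$ and $\pi_{\mathfrak{M}^{1(10)}}(N)=z^3$, which spans $\gm^{1(10)}$. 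Thus $\gg$ is a model of type \eqref{kore2}, and Theorem \ref{primoteorema} at once yields that $(\gg,\gq)$, with $\gq=\wh\gg\cap\gu=\langle z,E,M,N\rangle$, is a CR algebra whose isotropy is $\gg_o=\Re(\gq\cap\overline\gq)=\bR E$, and that the germ $M=G/G_o$ is $7$-dimensional, strongly regular of type \eqref{kore2}, hence $3$-nondegenerate. The stated terms $\gq_0=\langle E,M,N\rangle$, $\gq_1=\langle E,N\rangle$ and $\gq_2=\langle E\rangle=\gq\cap\overline\gq$ then follow by substituting $\gq$ into \eqref{chebarba}.

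For uniqueness and maximality I would argue by rigidity of the nonnegative part. Any model of type \eqref{kore2} has $\gg^p=\gc^p$ for $p<0$ and $\gg^p=0$ for $p>1$, so only $\gg^0$ and $\gg^1$ are in question. The key observation is that $\mathfrak{J}\notin\gg^0$: were $\mathfrak{J}\in\gg^0$, then $\ad(\mathfrak{J})$ would preserve $\gg^1$, but a one-line eigenvalue computation shows $\ad(\mathfrak{J})N$ is not a combination of $N$ and $\overline N$, contradicting $[\gg^0,\gg^1]\subset\gg^1$. Hence $\gg^0$ is a proper (three-dimensional) subspace of $\gc^0$, and (iii)--(iv) together with closure force $\gg^0=\langle E,M,\overline M\rangle$ and then $\gg^1=\langle N,\overline N\rangle$, each determined up to $\Aut(\gc,\mathfrak{J})$; this gives uniqueness. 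For maximality I would run the prolongation argument of Theorem \ref{thmsimple}(v): if a model $\gg'\supseteq\gg$ had $\gg'^0\supsetneq\gg^0$ then $\mathfrak{J}\in\gg'^0$, and the same eigenvalue argument would force $\gg'^1$ to contain all of $\gk^1\simeq S^3(\gc^{-1})$, after which iterating brackets leaves the finite-dimensional regime and contradicts the finiteness of models; thus $\gg'^0=\gg^0$, and since the linear algebra $\mathfrak{k}=\{X\in\wh\gg^0\mid[X,\wh\gg^{-2}]=0\}=\langle\ad M,\ad\overline M\rangle\subset\ggl(\wh\gg^{-1})$ has finite-dimensional Cartan prolongation, $\gg'$ is a transitive prolongation of the same $\wh\gg_{\le 0}$ and equals $\gg$ by \cite[Theorem 3.21]{MeNa1}.

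Finally, for the global statement I would take $G$ simply connected with $Lie(G)=\gg$ and let $G_o$ be the analytic subgroup with $Lie(G_o)=\bR E$. Because $E$ is the grading element, $\ad E$ is semisimple with integer (in particular real) eigenvalues $-2,-1,0,1$ on $\gg$, so $t\mapsto\exp(tE)$ is a proper injective one-parameter subgroup; hence $G_o\cong\bR$ is closed in $G$ and $M=G/G_o$ is globally defined.
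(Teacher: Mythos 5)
Your existence argument is essentially the paper's, reorganized: constructing $\gg$ with the stated components, checking closure of $\wh\gg$ via Proposition \ref{noioso} (your identities $[M,\overline M]=-i(M+\overline M)$ and $[N,\overline N]=0$ are correct), verifying (i)--(iv) of Definition \ref{ukip} by $\ad(\mathfrak J)$-eigenvalue bookkeeping, invoking Theorem \ref{primoteorema}, reading off the Freeman terms from \eqref{chebarba}, and getting closedness of $G_o$ from properness of $t\mapsto\exp(tE)$ -- all of this is sound and matches the paper's Steps 1--2 (existence half) and Step 4. The genuine gap is in your uniqueness and maximality paragraph. You assert that any model of type \eqref{kore2} has $\gg^p=0$ for $p>1$, ``so only $\gg^0$ and $\gg^1$ are in question.'' This is not part of Definition \ref{ukip}: condition (iv) only constrains the projection of $\wh\gg^p\cap\gu^p$ onto $\mathfrak{M}^{p(10)}$, and models routinely have nonzero components above the height of their core -- in Example \ref{example00} the core has height $-1$ while $\gg^1,\gg^2\neq0$, and likewise for the simple models of Theorem \ref{thmsimple}. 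Proving that in the present situation $\gg^p=0$ for all $p\geq2$ is precisely the content of the paper's Step 3, where the auxiliary space $\wt\gg^2$ of elements $X\in\wh\gc^2$ with $[X,e^{-2}]\in\wh\gg^0$ and $[X,\wh\gc^{-1}]\subset\wh\gg^1$ is shown to vanish because an explicit homogeneous linear system in six unknowns is nonsingular; transitivity of $\gc$ then kills all higher degrees. Without this, neither uniqueness nor maximality follows, and your attempted substitute -- importing the prolongation argument of Theorem \ref{thmsimple}(v) -- is not carried out: you assert without verification that $\mathfrak{k}=\left\langle\ad M,\ad\overline M\right\rangle$ has finite-dimensional Cartan prolongation (no analogue of the invariant form $\beta$ is produced), and ``iterating brackets leaves the finite-dimensional regime'' is not an argument.

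A second, related defect: your proof that $\mathfrak J\notin\gg^0$ is circular. You derive the contradiction from ``$\ad(\mathfrak J)N$ is not a combination of $N$ and $\overline N$,'' but for an arbitrary model of type \eqref{kore2} you do not yet know that $\gg^1=\left\langle N,\overline N\right\rangle$ -- that is the conclusion you are working toward. The paper argues differently: if $\mathfrak J\in\gg^0$, then $\ad(\mathfrak J)$-stability together with (ii)--(iv) of Definition \ref{ukip} and $\gm^{0(10)}=\mathfrak{M}^{0(10)}$ forces $\gg^0=\gc^0$, and the Morimoto--Tanaka classification (\cite[Prop.\ 3.2]{MT}) of graded subalgebras of $\gc$ with full nonpositive part leaves only four possibilities, each incompatible with $\operatorname{ht}(\gm)=1$. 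Two smaller inaccuracies in the same paragraph: closure of $\left\{E,\,z^2+\alpha z\bar z,\,\bar z^2+\bar\alpha z\bar z\right\}$ pins down $|\alpha|=1$, a whole circle of Borel subalgebras $\gb_\vartheta$ normalized to $\vartheta=0$ by the automorphisms $T_\vartheta$, not just $c=\pm1$ as you claim (though your normalization idea is right in spirit); and the step ``then $\gg^1=\left\langle N,\overline N\right\rangle$'' is asserted, whereas the paper must compute $\wt\gg^1=\left\langle N,\overline N,V,W\right\rangle$, exclude $V,W$ via the $\operatorname{ht}(\gm)=1$ constraint through the projections $\pi_{\mathfrak{M}^{2(10)}}$, $\pi_{\mathfrak{M}^{2(01)}}$, and then solve a nonlinear system coming from $[2M,\overline N_{\alpha\beta}]\in\wh\gg^1$ to force $\alpha=\beta=0$. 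These omitted computations are where the uniqueness and maximality claims of the theorem actually live.
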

\begin{proof}
We first infer some necessary conditions assuming the existence of $\gg$. We split the proof in several steps.  
\smallskip\par\noindent
{\it Step 1. The Lie subalgebra $\gg^0$.}
\smallskip\par
We claim that $\gg$ does not satisfy property $(\mathfrak J)$ (i.e., $\mathfrak J\in\gg^0$).
 Indeed in that case $\gg^0=\gc^0$, by \eqref{kore2} and (ii)-(iv) of Definition \ref{ukip}, but any subalgebra with the same nonpositively graded part of $\gc$ is of the form (\cite[Proposition 3.2]{MT})
\begin{itemize}
\item[--] $\gg=\gc^{-2}\oplus\gc^{-1}\oplus\gc^{0}$,
\item[--] $\gg=\gc^{-2}\oplus\gc^{-1}\oplus\gc^{0}\oplus \mu^{1}(\gc^{-1})\oplus\mu^{2|0}(\gc^{-2})$,
\item[--] $\gg=\gc^{-2}\oplus\gc^{-1}\oplus\gc^{0}\oplus\bigoplus_{p>0}\gk^p$,
\item[--] $\gg=\gc$,
\end{itemize}
and this is a contradiction, since $\operatorname{ht}(\gm)=1$. Hence $\dim\wh\gg^0=3$ and there is a basis of $\wh\gg^0$ of the form
$
\{E, z^2+\alpha z\bar z, \bar z^2+\bar\alpha z\bar z\}
$ for some $\alpha\in\bC$, by (ii)-(iv) of Definition \ref{ukip}. A direct computation yields
$$
[z^2+\alpha z\bar z, \bar z^2+\bar\alpha z\bar z]=-i\bar\alpha z^2 -2i z \bar z-i\alpha \bar z^2
$$
but this bracket is again in $\wh\gg^0$ if and only if $\alpha=e^{i\vartheta}$ for some $\vartheta\in[0,2\pi)$. It follows that $\wh\gg^0$ equals the Borel subalgebra of $\wh\gc^0\simeq\ggl_{2}(\bC)$ given by
$$
\mathfrak{b}_{\vartheta}=\left\langle E, z^2+e^{i\vartheta}z\bar z, \bar z^2+e^{-i\vartheta}z\bar z\right\rangle\;.
$$
 We now see $\vartheta=0$, up to isomorphisms of models. In view of the observation before Definition \ref{Knondeg} it is enough to note that the $0$-degree automorphism
\begin{multline*}
T_{\vartheta}:\wh\gc_-\longrightarrow \wh\gc_-\;,\qquad\quad T_\vartheta(e^{-2})=e^{-2}\;,\\
T_\vartheta(z)=e^{-i\vartheta/2}z\;,\qquad T_\vartheta(\bar z)=e^{i\vartheta/2}\bar z\;,
\end{multline*}
satisfies the following properties:
\begin{itemize}
\item[(i)] $T_{\vartheta}$ is real, in the sense that $
\overline{T_{\vartheta}(X)}=T_{\vartheta}(\overline{X})$ for all $X\in\wh\gc_-$;
\item[(ii)] $T_{\vartheta}$ commutes with $\ad(\mathfrak J):\wh\gc_-\longrightarrow\wh\gc_-$;
\item[(iii)] the prolongation of $T_{\vartheta}$ to $\wh\gc$ sends $\gb_0$ onto $\gb_{\vartheta}$.
\end{itemize}
From now on $\wh\gg^0$ is the Borel subalgebra $\gb=\gb_0=\{E,M, \overline M\}$ stabilizing the line spanned by $e_1=z+\bar z$.
\smallskip\par\noindent
{\it Step 2. The space $\gg^1$ as a representation of $\gg^0$.}
\smallskip\par
We first note that the auxiliary space 
\begin{equation*}
\widetilde\gg^1=\left\{X\in\wh\gc^1\,|\,[X,\wh\gc^{-1}]\subset \wh\gg^0\phantom{C^{C^C}}\!\!\!\!\!\!\!\!\!\!\!\right\}
\end{equation*}
is a $\wh\gg^0$-module with $\wh\gg^1\subset\wt\gg^1$ as a submodule, as $\wh\gg$ is a $\bZ$-graded Lie algebra. Now
\begin{align*}
X\in\wh\gc^{1}&\simeq (S^{3,0}\oplus S^{2,1}\oplus S^{1,2}\oplus S^{0,3})\oplus(S^{1,0}\oplus S^{0,1})
\end{align*}
decomposes into
$X=\alpha_{30} z^3+\alpha_{21} z^2\bar z+ \alpha_{12}z\bar z^2  +  \alpha_{03} \bar z^3+\alpha_{10} z+\alpha_{01}\bar z$
and a direct computation using Proposition \ref{noioso} shows
\begin{align*}
[X,z]&=\frac{1}{2}(\alpha_{10}+i \alpha_{21}) z^2
+(\frac{1}{2}\alpha_{01}+i \alpha_{12})z\bar z+
\frac{3}{2}i\alpha_{03} \bar z^2+\frac{1}{2}i\alpha_{01}\;,\\
[X,\bar z]&=-\frac{3}{2}i \alpha_{30} z^2+(\frac{1}{2}\alpha_{10}-i \alpha_{21})z\bar z
+\frac{1}{2}(\alpha_{01}-i\alpha_{12}) \bar z^2-\frac{1}{2}i\alpha_{10}\;.
\end{align*}
It turns out that these brackets are in $\wh\gg^0$ if and only if the following linear system of  equations is satisfied:
\begin{align*}
\alpha_{10}-2i\alpha_{21}&=\alpha_{01}-i\alpha_{12}-3i\alpha_{30}\,,\\
\alpha_{01}+2i\alpha_{12}&=3i\alpha_{03}+\alpha_{10}+i\alpha_{21}\,.
\end{align*}
In other words
$$
\widetilde\gg^1=(z+\bar z)\odot \gb\bigoplus \left\langle z^2\bar z+z\bar z^2+\frac{i}{2}z  -\frac{i}{2}\bar z\right\rangle
$$
with a basis of the form
$\displaystyle
\left\{N,\overline N, V, W\!\!\!\!\!\!\!\!\!\!\!\!\!\phantom{C^{C^C}}\right\}
$
where
\begin{align*}
V&=z^2\bar z+z\bar z^2+\frac{i}{2}z-\frac{i}{2}\bar z\;,\qquad\qquad\\
W&=z+\bar z\\
N&=(z+\bar z)\odot (z^2+z\bar z) -3i(z+\bar z)\\
&=z^3+2z^2\bar z+z\bar z^2-3i z -3i\bar z\;.
\end{align*}
The element $N$ is characterized by the following property: it is the unique non-trivial element in $\widetilde\gg^1\cap\gu$ which commutes with its conjugate (we omit the long but straightforward proof of this fact, based again on Proposition \ref{noioso}). We also note that $\pi_{\mathfrak M^{1(10)}}(V)=\pi_{\mathfrak M^{1(01)}}(V)=\pi_{\mathfrak M^{1(10)}}(W)=\pi_{\mathfrak M^{1(01)}}(W)=0$.

Now (iii)-(iv) of Definition \ref{ukip} say that $\wh\gg^1\cap\overline\gu$ is at least one-dimensional, including an element of the form
\begin{equation*}
\overline N_{\alpha\beta}=\overline N+\alpha V+\beta W\,,
\end{equation*}
for some $\a,\b\in\bC$. Clearly $N_{\alpha\beta}\in\wh\gg^1\cap\gu$ too. From $\ad(\mathfrak{J})$-equivariance and Proposition \ref{noioso}, we get for all $\delta,\gamma\in\bC$
\begin{align*}
\pi_{\mathfrak{M}^{2(10)}}[N_{\alpha\beta},\gamma V+\delta W]&=
\pi_{\mathfrak{M}^{2(10)}}[z^3,(\delta+\frac{i}{2}\gamma) z+\gamma z^2\bar z ]\\
&=(\frac{1}{2}\delta-\frac{5}{4}i\gamma) z^4\,,\\
\pi_{\mathfrak{M}^{2(01)}}[\overline N_{\alpha\beta},\gamma V+\delta W]&=
\pi_{\mathfrak{M}^{2(01)}}[\bar z^3,(\delta-\frac{i}{2}\gamma)\bar z+\gamma z\bar z^2]\\
&=(\frac{1}{2}\delta+\frac{5}{4}i\gamma) \bar z^4\;,
\end{align*}
where $\pi_{\mathfrak{M}^{2(10)}}:\wh\gc^2\longrightarrow \mathfrak{M}^{2(10)}$ and $\pi_{\mathfrak{M}^{2(01)}}:\wh\gc^2\longrightarrow \mathfrak{M}^{2(01)}$ 
are the projections 
onto the $\ad(\mathfrak J)$-eigenspaces of extremal eigenvalues $\pm 4i$ in $\wh\gc^2$. But $\operatorname{ht}(\gm)=1$ and therefore an element $\gamma V+ \delta W$ belongs to $\wh\gg^1$ if and only if $\gamma=\delta=0$.
In other words we obtained $\wh\gg^1=\left\langle N_{\alpha\beta},\overline N_{\alpha\beta}\right\rangle$. We will now see that $\alpha=\beta=0$. 

By $\ad(\mathfrak{J})$-equivariance and $[N,\overline N]=0$, we get
\begin{align*}
\pi_{\mathfrak{M}^{2(10)}}[N_{\alpha\beta},\overline N_{\alpha\beta}]&=
\pi_{\mathfrak{M}^{2(10)}}[N,\overline N_{\alpha\beta}]\\
&=\pi_{\mathfrak{M}^{2(10)}}[N,(\beta+\frac{i}{2}\alpha)z+\alpha z^2\bar z]\\
&=\pi_{\mathfrak{M}^{2(10)}}[z^3,(\beta+\frac{i}{2}\alpha)z+\alpha z^2\bar z]\\
&=(\frac{1}{2}\beta-\frac{5}{4}i\alpha) z^4\,,
\end{align*}
forcing $2\beta=5i\alpha$. On the other hand
\begin{align*} 
[2 M,\overline N_{\alpha\beta}]&=-2i(1+\alpha)z^3-i(7+3\alpha)z^2\bar z
-i(8+\alpha)z\bar z^2-3i\bar z^3\\
&\;\;\;\;+(3+2\alpha)\bar z+(3+\alpha)z
\end{align*}
and the condition $[2 M,\overline N_{\alpha\beta}]\in\wh\gg^1$ is equivalent to the following system of non-linear equations on $\a\in\bC$,
$$
2\alpha+\bar\alpha+\alpha\bar\alpha=0\;,\quad 2\alpha+6\bar\alpha+6\alpha\bar\alpha=0\;,\quad 2\alpha-4\bar\alpha-4\alpha\bar\alpha=0\,.
$$
The unique solution of this system is $\alpha=0$, hence $\b=0$ and $\wh\gg^1=\left\langle N,\overline N \right\rangle$.

It is not difficult check now that $\wh\gg:=\wh\gc^{-2}\oplus\wh\gc^{-1}\oplus\wh\gg^0\oplus\wh\gg^1$ is a complex Lie subalgebra of $\wh\gc$ and $\gg=\Re(\wh\gg)$ a model of type \eqref{kore2}. The uniqueness and maximality of $\gg$ are a consequence of the next step.
\smallskip\par\noindent
{\it Step 3. The space $\gg^p$ is trivial for all $p\geq 2$.}
\smallskip\par
We show that the auxiliary subspace $\widetilde\gg^2$ of $\wh\gc^2$ determined by the inclusions
\begin{equation*}
\begin{split}
\widetilde\gg^2&\subset (S^{3,1}\oplus S^{2,2}\oplus S^{1,3})\oplus(S^{2,0}\oplus S^{1,1}\oplus S^{0,2})\oplus S^{0,0}\subset \wh\gc^2\;,\\
[\widetilde\gg^2,\wh\gc^{-1}]&\subset \wh\gg^1\;,
\end{split}
\end{equation*}
is trivial. This clearly implies $\wh\gg^2=0$ and also $\wh\gg^p=0$ for all $p> 2$, by the transitivity of $\wh\gc$.
Now any $X\in\wt\gg^2$
satisfies
$[X,e^{-2}]\subset \wh\gg^0$ and it is therefore of the form
\begin{align*}
X&=\alpha_{31}z^3\bar z +\alpha_{22}z^2 \bar z^2+ \alpha_{13}z\bar z^3\\
&\;\;\;+\alpha_{20} z^2+(\alpha_{20}+\alpha_{02})z\bar z+\alpha_{02}\bar z^2+\alpha_{00}\,,
\end{align*}
for some $\alpha_{31},\ldots,\alpha_{00}$ in $\bC$; from this and Proposition \ref{noioso} we get
\begin{align*}
[X,z]&=(\frac{i}{2}\alpha_{31}+\frac{1}{2}\alpha_{20})z^3
+(i\alpha_{22}+\frac{1}{2}\alpha_{02}+\frac{1}{2}\alpha_{20})z^2\bar z \\
&\;\;\;\;+(\frac{3}{2}i\alpha_{13}+\frac{1}{2}\alpha_{02})z\bar z^2
+(\frac{i}{2}\alpha_{02}+\frac{i}{2}\alpha_{20}+\frac{1}{2}\alpha_{00})z+i\alpha_{02}\bar z\,,\\
[X,\bar z]&=(-\frac{3}{2}i\alpha_{31}+\frac{1}{2}\alpha_{20})z^2\bar z 
+(-i\alpha_{22}+\frac{1}{2}\alpha_{02}+\frac{1}{2}\alpha_{20})z\bar z^2 \\
&\;\;\;\;+(-\frac{i}{2}\alpha_{13}+\frac{1}{2}\alpha_{02})\bar z^3
-i\alpha_{20} z+(-\frac{i}{2}\alpha_{02}-\frac{i}{2}\alpha_{20}+\frac{1}{2}\alpha_{00})\bar z\,.
\end{align*}
The claim $X=0$ follows from the fact that conditions ``$[X,z]$ proportional to $N$'' and ``$[X,\bar z]$ proportional to $\overline N$'' are equivalent to a homogeneous linear system of six equations and six indeterminates which is nonsingular. 
\smallskip\par\noindent
{\it Step 4. The last claims.}
\smallskip\par
The terms of the Freeman sequence follow from a direct computation. The globally well-defined homogeneous CR manifold is the quotient of the simply connected Lie group $G$ with Lie algebra $Lie(G)=\gg$ by the closed subgroup $G_o$ with Lie algebra $Lie(G_o)=\mathbb R E$. We omit the details.
\end{proof}
\begin{appendix}
\section{Proof of Proposition \ref{firstorbitprop}.}
\label{appendice2}
\setcounter{equation}{0}
We first consider the reducible representation \eqref{eq:rapres} of $
\wt K$. The associated orbits $\wt K\cdot z$ split in two types, according to whether the real and imaginary components $x$ and $y$ of $z=x+iy$ are linearly dependent over $\bR$ or not. 

In the first case $\wt K\cdot z\simeq \wt K/\wt H$ where $\wt H\simeq \mathrm{O}_{2}(\bR)$ and a complete set of representatives for these orbits
is
\beq
\label{mori1o}
\left\{z_{t}=(1+it)\e_1\,|\!\!\!\!\!\!\!\!\!\!\phantom{C^{C^C}}t\in\bR\right\}\,\cup\, i\cdot \e_1\;.
\eeq
In the second case $\wt K\cdot z \simeq \wt K/\wt H$ where $\wt H\simeq\bZ_2$ and the representative set is parametrized by the upper half plane
$$\mathrm H=\left\{(t_1,t_2)\,|\!\!\!\!\!\!\!\!\!\!\phantom{C^{C^C}}t_2>0\right\}$$   
and explicitly given by
\beq
\label{mori2o}
\left\{z_{t_1,t_2}=(1+it_1)\e_1+(it_2)\e_2\,|\!\!\!\!\!\!\!\!\!\!\phantom{C^{C^C}} (t_1,t_2)\in \mathrm H\right\}\;.\quad\;
\eeq
Recall that the actions of $\wt K$ and $\bC^\times$ commute. In particular any $c=e^{i\vartheta}$ sends an orbit $\wt K\cdot z$ of type $\wt H$ onto an orbit of the same type. 

Using \eqref{mori1o} we check that all orbits of type $\wt H\simeq\mathrm O_{2}(\bR)$ are related and glue into a single $K_\sharp$-orbit, say $K_\sharp\cdot\e_1$. In particular any $e^{i\vartheta}$ acts on
the collection 
$$\displaystyle
\cU=V^\times\backslash K_\sharp\cdot\e_1=\!\!\!\bigcup_{(t_1,t_2)\in\mathrm H}\wt K\cdot z_{t_1,t_2}
$$ 
of all orbits of type $\wt H\simeq\bZ_2$.
More precisely any $z=x+iy\in\cU$ belongs to the orbit $\wt K\cdot z_{t_1,t_2}$ where
\begin{equation}
\label{enzooo}
t_1=\frac{\left\langle x,y\right\rangle}{\left\langle x,x\right\rangle}\;,\quad\qquad (t_2)^2=\frac{\left\langle y-t_1 x,y-t_1 x\right\rangle}{\left\langle x,x\right\rangle}\;.
\end{equation}
This follows from a check with $z=z_{t_1,t_2}$ and from the observation that the r.h.s. of the two identities are constants on the orbits. In other words there is an action of $S^1$
on $\mathrm H$ with the property that $e^{i\vartheta}\cdot(t_1,t_2)=(t_1',t_2')$ if and only if the associated orbits $\wt K\cdot z_{t_1,t_2}$ and $\wt K\cdot z_{t_1',t_2'}$ are 
subsets of a $K_\sharp$-orbit. 

A computation using \eqref{enzooo} and $z'=e^{i\vartheta}\cdot z\in\wt K\cdot z_{t_1',t_2'}$ yields the explicit expression of this action
\vskip-0.3cm\par\noindent
\begin{align}\notag
\\
\label{stanco1o}
t_1'&=
\frac{\frac{1}{2}(1-t_1^2-t_2^2)\sin 2\vartheta+t_1\cos 2\vartheta}{(t_1\sin\vartheta-\cos\vartheta)^2+(t_2\sin \vartheta)^2}\;,\\
\notag
\\
\label{stanco2o}
(t_2')^2&=\frac{(1+t_1^2)^2\sin^2 \vartheta+t_2^2(t_1\sin \vartheta+\cos \vartheta)^2}{(t_1\sin \vartheta-\cos \vartheta)^2+(t_2\sin \vartheta)^2}\;,
\end{align}
\vskip0.2cm\par\noindent 
and a continuity argument in $\vartheta$ from $0$ to $\pi/2$ in \eqref{stanco1o} says that $(t_1,t_2)$ is always on the $S^1$-orbit of some $(t_1',t_2')$ with $t_1'=0$.
Exploiting \eqref{stanco1o}-\eqref{stanco2o} now with $t_1=t_1'=0$, we finally get $(0,1]$ as set of representatives 
 for $\mathrm H/S^1$. 

Summarizing: for $t\in[0,1]$ the $K_\sharp$-orbits $K_\sharp\cdot z_{0,t}=K_\sharp\cdot (\e_1+it\e_2)$ are pairwise disjoint and their union is the entire $V^\times$. This is the first claim. 
Using \eqref{mori0o} one also checks that $N_\sharp$ is isomorphic to $\bC^\times\cdot\mathrm{O}_{2}(\bR)$ if $t=0$, $\bC^\times\cdot\bZ_2$ if $0<t<1$, $\bC^\times\cdot SO_2(\bR)$ if $t=1$. This readily implies the last claim.
\qed
\medskip\par
\section{Proof of Proposition \ref{secondorbitprop}.}
\label{appendice3}
It is similar to Proposition \ref{firstorbitprop} and we only give the main steps. As before, we first split the orbits in two types. In the first case any orbit is equivalent to one of the orbits displayed in Table $3$ and a complete set of representatives is
\begin{multline}
\label{mori1}
\left\{z_{t}=(1+it)\e_1\phantom{C^{C^C}}\!\!\!\!\!\!\!\!\!\!\!\!\right\}\,\cup\, i\cdot \e_1\;,\qquad\left\{w_{t}=(1+it)\e_3\phantom{C^{C^C}}\!\!\!\!\!\!\!\!\!\!\!\!\right\}\,\cup\, i\cdot \e_3\\
\text{and}\quad\left\{u_{t}=(1+it)(\e_1+\e_3)\phantom{C^{C^C}}\!\!\!\!\!\!\!\!\!\!\!\!\right\}\,\cup\, i\cdot (\e_1+\e_3)\;,\qquad\;
\end{multline}
where $t\in\bR$. The associated stabilizers $\wt H$ are also in Table $3$. In the second case we first use $\wt K$ to fix $x$ equal to $\e_1$ if space-like (resp. $\e_1+\e_3$ if null, $\e_3$ if time-like) 
and then the stabilizer of $x$ in $\wt K$ to fix $y$. This gives the following six different types of representatives:
\begin{description}
\item[(i)] $\e_1+i(t_1\e_1+t_2\e_2)$ where $t_1\in\bR$ and $t_2\neq 0$;
\item[(ii)] $\e_1+\e_3+i(t(\e_1+\e_3)\pm(\e_1-\e_3))$ where $t\in\bR$;
\item[(iii)] $\e_1+\e_3\pm i\e_2$;
\item[(iv)] $\e_1+i(t_1\e_1+t_3\e_3)$ where $t_1\in\bR$ and $t_3>0$,
\item[(v)] $\e_1+i(t\e_1\pm(\e_2+\e_3))$ where $t\in\bR$,
\item[(vi)] $\e_3+i(t_1\e_1+t_3\e_3)$ where $t_1>0$ and $t_3\in\bR$.
\end{description}
The stabilizer $\wt H$ is always trivial, except in case \textbf{(i)} where $\wt H\simeq\bZ_2$.

Any $e^{i\vartheta}$ sends an orbit $\wt K\cdot z$ of type $\wt H$ onto an orbit of the same type. We first focus on representatives \eqref{mori1}.
All $\wt K$-orbits with $\wt H\simeq\SO^+(1,1)\cup\SO^+(1,1)\begin{psmallmatrix} 1 &0 & 0\\ 0& 1 & 0 \\ 0 & 0 & -1 \end{psmallmatrix}$ are related and glue into $K_\sharp\cdot\e_1$. Similarly those of type $\wt H\simeq\SO_2(\bR)$ (resp. $\wt H\simeq\bR^+\ltimes\bR$)
glue into $K_\sharp\cdot\e_3$ (resp. $K_\sharp\cdot(\e_1+\e_3)$).

The collection of all $\wt K$-orbits of type $\wt H\simeq\bZ_2$
is stable under any $e^{i\vartheta}$ and the representative of $\wt K\cdot e^{i\vartheta}\cdot(\e_1+i(t_1\e_1+t_2\e_2))$ is
$
(\e_1+i(t'_1\e_1+t'_2\e_2))
$
where
\begin{align}\notag
\\
\label{stanco1oo}
t_1'&=\frac{t_1(\cos^2\vartheta-\sin^2\vartheta)+\frac{1}{2}(1-t_1^2-t_2^2)\sin 2\vartheta}{\cos^2\vartheta+(t_1^2+t_2^2)\sin^2\vartheta-t_1\sin 2\vartheta}
\;,\\
\notag
\\
\label{stanco2oo}
t_2'&=\frac{t_2}{\cos^2\vartheta+(t_1^2+t_2^2)\sin^2\vartheta-t_1\sin 2\vartheta}\;.
\end{align}
\vskip0.2cm\par\noindent 
A continuity argument 
implies that $(t_1,t_2)$ is always  on the $S^1$-orbit of some $(t'_1,t'_2)$ with $t'_1=0$ and hence, using  \eqref{stanco1oo}-\eqref{stanco2oo} with $t_1=t_1'=0$, that 
$$\left\{\e_1+it\e_2\,|\,-1\leq t\leq 1, t\neq 0\phantom{C^{C^C}}\!\!\!\!\!\!\!\!\!\!\!\!\right\}$$ 
parametrizes the representatives of the $K_\sharp$-orbits $K_\sharp\cdot(\e_1+it\e_2)$. 

Finally the collection of $\wt K$-orbits with $\wt H=\left\{\mathrm{1}\right\}$, i.e., with representative as in \textbf{(ii)}-\textbf{(vi)} above, is stable under any $e^{i\vartheta}$. Applying appropriate $e^{i\vartheta}$ to any representative in \textbf{(iv)}-\textbf{(vi)} we can always reach an element $z=x+iy$ with a null real component $x$. In other words any orbit 
with representative in \textbf{(iv)}-\textbf{(vi)} is $S^1$-related with at least one orbit with representative in \textbf{(ii)} or \textbf{(iii)}. 

A similar argument says that the representatives in \textbf{(iii)} are representatives of the associated $K_\sharp$-orbits too, whereas representatives of $K_\sharp$-orbits as in \textbf{(ii)} are given by $\e_1+\e_3+i(t(\e_1+\e_3)+(\e_1-\e_3))$, $t\in\bR$. This concludes the proof of the first claim.

Using \eqref{mori0o} we finally see
$$
N_\sharp=\begin{cases}
\bC^\times\cdot(\SO^{+}(1,1)\cup \SO^{+}(1,1)\cdot\begin{psmallmatrix} 1 & 0 & 0 \\ 0 &1  &0 \\ 0 &0 & -1 \end{psmallmatrix})\;\;\;\text{for}\;\;\e_1,\\
\bC^\times\cdot\SO_2(\bR)\;\;\;\text{for}\;\;\e_1\pm i\e_2\;\;\text{and}\;\;\e_3,\\
\bC^\times\cdot (\bR^+\ltimes\bR)\;\;\;\text{for}\;\;\e_1+\e_3,\\
\bC^\times\cdot\bZ_2\;\;\;\text{for}\;\;\e_1+it\e_2\;\;\text{where}\;-1<t<1,\, t\neq 0,\\
\bC^\times\;\;\;\text{for}\;\;\e_1+\e_3\pm i\e_2\;\;\text{and all}\;\;\e_1+\e_3+i(t(\e_1+\e_3)+(\e_1-\e_3)),
\end{cases}
$$
which readily implies the last claim.
\setcounter{equation}{0}
\qed
\end{appendix}
\bigskip\par

\end{document}